\documentclass[11pt]{amsart}
\usepackage{amsmath,amssymb, amscd}
\usepackage{graphicx}

\usepackage{verbatim}

\usepackage{tikz}
\usetikzlibrary{trees}

\def\sqr#1#2{{\vcenter{\hrule height.#2pt
        \hbox{\vrule width.#2pt height#1pt \kern#1pt
                \vrule width.#2pt}
        \hrule height.#2pt}}}

\setlength{\textwidth}{5.5in}
\setlength{\textheight}{8.5in}
\setlength{\abovedisplayskip}{14pt}
\setlength{\belowdisplayskip}{14pt}
\setlength{\abovedisplayshortskip}{14pt}
\setlength{\belowdisplayshortskip}{14pt}
\setlength{\oddsidemargin}{.5in}
\setlength{\evensidemargin}{.5in}
\setlength{\topmargin}{-.25in}

\newtheorem{theorem}{Theorem}[section]
\newtheorem{lemma}[theorem]{Lemma}
\newtheorem{proposition}[theorem]{Proposition}
\newtheorem{corollary}[theorem]{Corollary}

\theoremstyle{definition}
\newtheorem{definition}[theorem]{Definition}
\newtheorem{example}[theorem]{Example}

\newtheorem{question}[theorem]{Question}
\newtheorem{questions}[theorem]{Questions}
\newtheorem{remark}[theorem]{Remark}
\newtheorem{setting}[theorem]{Setting}
\newtheorem{fact}[theorem]{Fact}

\newtheorem{discussion}[theorem]{Discussion}

\DeclareMathOperator{\n}{\mathbf n}
\DeclareMathOperator{\m}{\mathbf m}
\DeclareMathOperator{\p}{\mathbf p}

\DeclareMathOperator{\Spec}{Spec}

\DeclareMathOperator\CIT{CIT}

\DeclareMathOperator\ord{ord}
\DeclareMathOperator\gr{gr}

\DeclareMathOperator\Min{Min}

\DeclareMathOperator\Proj{Proj}
\DeclareMathOperator{\hgt}{ht}

\DeclareMathOperator{\N}{\mathbb N}
\DeclareMathOperator{\Z}{\mathbb Z}

\DeclareMathOperator{\Bl}{Bl}
\DeclareMathOperator{\Rees}{Rees }

\def\alert#1{\smallskip{\hskip\parindent\vrule%
\vbox{\advance\hsize-2\parindent\hrule\smallskip\parindent.4\parindent%
\narrower\noindent#1\smallskip\hrule}\vrule\hfill}\smallskip}

\begin{document}

\title[Finitely supported  $*$-simple complete ideals]
{Finitely supported  $*$-simple complete ideals \\ in a regular local ring }



\author{William Heinzer}
\address{Department of Mathematics, Purdue University, West
Lafayette, Indiana 47907 U.S.A.}
\email{heinzer@math.purdue.edu}

\author{Mee-Kyoung Kim}
\address{Department of Mathematics, Sungkyunkwan University, Jangangu Suwon
440-746, Korea}
\email{mkkim@skku.edu}

\author{Matthew Toeniskoetter}
\address{Department of Mathematics, Purdue University, West
Lafayette, Indiana 47907 U.S.A.}
\email{mtoenisk@math.purdue.edu}

\date \today

\subjclass{Primary: 13A30, 13C05; Secondary: 13E05, 13H15}
\keywords{ Rees valuation, finitely supported ideal, special $*$-simple 
complete ideal, base points, point basis, transform of an ideal,  monomial ideal, 
local quadratic transform.
}

\maketitle
\bigskip

\begin{abstract}  Let $I$ be a finitely supported complete $\m$-primary ideal of a  regular local 
ring  $(R,\m)$.    A  theorem of  Lipman  implies that  $I$ has a unique factorization as 
a $*$-product of special $*$-simple complete ideals with possibly negative exponents for
some of the factors.    The existence of negative exponents occurs  if $\dim R \ge 3$  because 
of the existence of finitely supported $*$-simple ideals that are not special.  
We consider properties  of special $*$-simple complete ideals such as their Rees 
valuations and point basis. 
Let $(R,\m)$ be
a $d$-dimensional equicharacterstic regular local ring with $\m = (x_1, \ldots, x_d)R$. 
We define  monomial quadratic transforms of $R$ and 
consider transforms and inverse 
transforms of monomial ideals.   For a large class of monomial ideals  $I$   
that includes complete inverse 
transforms,  we prove that the minimal number of generators of $I$ is completely 
determined by the order of $I$. We give necessary and sufficient conditions for 
the complete inverse transform of a $*$-product of monomial ideals to be the $*$-product 
of the complete inverse transforms of the factors.
This yields  examples  of finitely supported $*$-simple monomial ideals that 
are not special.  We  
 prove that a finitely supported $*$-simple monomial ideal with linearly ordered base points is special $*$-simple.
\end{abstract}

\baselineskip 18 pt

\section{Introduction} \label{c1}

Let $(R,\m)$ be a regular local ring of dimension $d \ge 2$.
In the article
\cite{L},    Lipman    considers the structure of a certain class of 
complete ideals of $R$, the finitely supported complete ideals. 
 He  proves a factorization theorem for the finitely
supported complete ideals that  extends the factorization theory of complete ideals 
in a two-dimensional regular local ring  as developed 
by Zariski  \cite[Appendix 5]{ZS2}.   Heinzer and Kim in \cite{HK1} consider the Rees valuations of 
special $*$-simple complete ideals, and ask whether a $*$-simple complete ideal with linearly ordered 
base points is necessarily special $*$-simple.  We give an affirmative answer to this question 
for the case of monomial ideals.

Other work on finitely supported complete ideals  has been done by  Gately
in \cite{G1} and \cite{G2} and    by Campillo, Gonzalez-Sprinberg and Lejeune-Jalabert in \cite{CGL}.

All rings we consider are assumed to be commutative with an identity element.
We use the concept of complete ideals as defined and discussed in Swanson-Huneke 
\cite[Chapters~5,6,14]{SH}.  We also use a number of concepts considered in Lipman's 
paper \cite{L}. The product of two complete ideals in a two-dimensional regular local 
ring is again complete. This no longer holds in higher dimension, \cite{C} or \cite{Hu}.
To consider the higher dimensional case,  one defines for ideals $I$ and $J$ the $*$-product,
$I*J$ to be the completion of $IJ$.  A complete ideal $I$ in a commutative ring $R$ 
is said to be {\bf $*$-simple} if $I \ne R$ and  if 
$I = J*L$ with ideals $J$ and $L$ in $R$  implies that either $J = R$ or $L = R$.

Another concept used by Zariski in \cite{ZS2} is 
that of the transform of an ideal; the complete transform of an
ideal is used in \cite{L} and \cite{G2}.  

\begin{definition} \label{1.1}
Let $R \subseteq T$ be unique factorization domains (UFDs)  with  $R$
and $T$ having the same field of fractions, and let  $I$ be an ideal of $R$ not contained in 
any proper principal ideal. 
\begin{enumerate}
\item The  {\bf transform } of $I$ in $T$ is the ideal $I^T = a^{-1}IT$,
where $aT$ is the smallest principal ideal in $T$ that contains $IT$.
\item  
The {\bf complete transform} of $I$ in $T$ is the completion $\overline {I^T}$ of $I^T$.
\end{enumerate} 
\end{definition}

A   proper  ideal $I$  in a commutative ring $R$   is {\bf simple} if $I \neq L\cdot H$, for any proper ideals $L$ and $H$.
An element $\alpha \in R$ is said to be {\bf integral over} $I$ 
if $\alpha$ satisfies an equation of the form
$$
\alpha^n + r_1 \alpha^{n-1} + \cdots + r_n = 0, \quad \text{where} \quad r_i \in I^i.
$$
The set of all elements in    $R$  that  are integral over an ideal $I$ forms 
an ideal, denoted by $\overline{I}$ and called the {\bf integral closure} of $I$.
An ideal $I$ is said to be {\bf complete}   (or, {\bf  integrally closed})   if $I = \overline {I}$.

For an ideal $I$ of a local ring $(R,\m)$, the {\bf order} of $I$, denoted $\ord_{R} I$, is $r$ 
if $I \subseteq \m^r$ but $I \nsubseteq \m^{r+1}$.
 If $(R,\m)$ is a regular
local ring, the function that associates to an element $a \in R$,   
the order of the principal ideal $aR$,   defines  a 
discrete rank-one valuation, denoted $\ord_R$  on the field of fractions of $R$.  The associated valuation 
ring (DVR)  is called {\bf the order valuation ring } 
of $R$.

Let $I$ be a nonzero ideal of a Noetherian integral domain $R$. 
The set  of  {\bf Rees valuation rings } of $I$  is  denoted {\bf Rees}$~I$, or by  {\bf Rees}$_RI$
to  also indicate the ring in which $I$ is an ideal. It 
is by  definition the set of DVRs
$$ 
\Big\{\Big(\overline {R\Big[ \frac{I}{a}\Big]}\Big)_{Q}~~ \vert \quad  0 \neq a \in I  \quad 
\text{and} \quad  Q \in \Spec \Big( \overline {R\Big[ \frac{I}{a}\Big]} \Big)\quad 
\text{is of height one with}~~
 I \subset Q \Big\},
$$
where $\overline{\cdot}$ denotes integral closure in the field of fractions. 
The corresponding  discrete valuations with value group $\mathbb Z$ 
are called the {\bf Rees valuations} of $I$.   If  $J \subseteq I$  are ideals of $R$ and $I$ is integral over $J$,  
then  $\Rees J = \Rees I$. 

An ideal $I$ is said to be {\bf normal} if all the powers of $I$ are complete.  
Let $I$ be a normal $\m$-primary ideal of a normal Noetherian  local 
domain $(R,\m)$.  The minimal prime ideals of $\m R[It]$
in the Rees algebra $R[It]$   are in one-to-one correspondence with the Rees valuation rings
of $I$.  The correspondence 
associates to each Rees valuation ring $V$ of $I$ a unique prime 
$P   \in \Min(\m R[It])$ such that $V=R[It]_{P} \cap \mathcal{Q}(R)$.  Properties of the quotient 
ring $R[It]/P$ relate to properties of certain birational extensions of $R$.

We use $\mu (I)$ to denote the minimal number of generators of an ideal $I$.

\section{Preliminaries } \label{c2}

If $R$ is a subring of a valuation domain  $V$   and  $\m_V$   is the  maximal ideal of $V$,  then 
the prime ideal $\m_V \cap  R$    is  called  {\bf the center} of $V$ on $R$.
Let $(R,\m)$ be a Noetherian local domain with field of fractions $\mathcal Q(R)$.
A valuation domain  $(V, \m_V)$ is said to {\bf birationally dominate}  $R$ 
if $R \subseteq V \subseteq \mathcal Q(R)$ and  $\m_V  \cap R = \m$, that is, $\m$ is the 
center of $V$ on $R$.
The valuation domain  $V$ is said
to be a {\bf prime divisor} of $R$ if  $V$ birationally dominates $R$ 
and the transcendence degree of the field $V/\m_V$ 
over $R/\m$ is $\dim R - 1$. If $V$ is a prime divisor of $R$, then $V$ is a DVR
\cite[p. 330]{A}.

The {\bf quadratic dilatation}  or {\bf blowup}  of $\m$ along $V$,  
cf.  \cite[ page~141]{N},  is the
unique local ring on the blowup $\Bl_{\m}(R)$  of $\m$ that is dominated by  $V$.  
The ideal $\m V$ is 
principal and is generated by an element of $\m$.  Let $a \in \m$ be such that $aV = \m V$.  Then
$R[\m/a] \subset V$. Let $Q := \m_V \cap R[\m/a]$. Then $R[\m/a]_Q$ is the  { \bf  quadratic 
transformation   of }
$R$ {\bf along }  $V$.  In the special case where $(R,\m)$ is a $d$-dimensional regular local domain 
we use the following terminology.

\begin{definition}\label{2.1}
Let $d$ be a positive integer and let $(R, \m, k)$ be a $d$-dimensional regular local ring 
with maximal ideal $\m$ and  residue field $k$. 
Let $x\in \m \setminus \m^2$ and let $S_1 := R[\frac{\m}{x}]$. The ring  $S_1$ is a
$d$-dimensional regular ring in the sense that    $\dim S_1 = d$ and each localization of $S_1$ at a 
prime ideal is a regular local ring.  To see this, observe that  $S_1/xS_1$ is isomorphic to a 
polynomial ring in $d-1$ variables over the field $k$, cf. \cite[Corollary~5.5.9]{SH},
 and $S_1[1/x] = R[1/x]$ is a regular ring. Moreover,
$S_1$ is a UFD  since $x$ is a prime element of $S_1$ and
$S_1[1/x] = R[1/x]$ is a UFD, cf. \cite[Theorem~20.2]{M}.
Let $I$  be  an $\m$-primary ideal of $R$ with $r:=\ord_{R}(I)$. Then one has in $S_1$
$$
IS_1=x^rI_1 \quad \text{for some ideal}\quad I_1 \quad \text{of}\quad S_1.
$$
It follows  that either $I_1 = S_1$ or $\hgt I_1 \ge 2$. 
Thus  $I_1$ is  the transform $I^{S_1}$  of $I$ in $S_1$ as
in Definiton~\ref{1.1}. 

Let $\p$ be a prime ideal of $R[\frac{\m}{x}]$ with $\m \subseteq \p$. 
      The local ring 
$$R_1:~= ~R[\frac{\m}{x}]_{\p}  ~ = (S_1)_{\p}
$$
 is called a {\bf local quadratic transform} of $R$;  the ideal
$I_1R_1$ is the  transform of $I$ in $R_1$ as in Definition~\ref{1.1}.  
\end{definition}

We follow the notation of \cite{L} and  refer to regular local rings of dimension at
least two 
as   {\bf points}.
A point $T$ is said to be {\bf infinitely near} to a point $R$,  in symbols, $R~ \prec~ T$, 
if there is a finite sequence of local quadratic transformations 
\begin{equation} \label{e1} 
R=:R_0 ~\subset~ R_1 ~\subset~ R_2~ \subset \cdots \subset~ R_n=T \quad (n \geq 0),
\end{equation}
where $R_{i+1}$ is a local quadratic transform of $R_i$ for $i=0,1, \ldots, n-1$.
If such a sequence of local quadratic transforms as in (\ref{e1}) exists, then it is unique and 
it is
called the {\bf quadratic sequence} from $R$ to $T$ \cite[Definition~1.6]{L}.   The set of points $T$ infinitely 
near to $R$ such that $T$ is a local quadratic transform of $R$ is called the {\bf first neighborhood } of $R$.
If $T$ is in the first neighborhood of $R$, a point in the first neighborhood of $T$ is said to be in 
the {\bf second neighborhood} of $R$.  Similar terminology is used for each positive integer $n$.

\begin{remark} \label{2.111} Let $(R,\m)$ be a regular local ring with $\dim R \ge 2$. 
As noted in \cite[Proposition~ 1.7]{L}, there is a one-to-one correspondence between the 
points $T$ infinitely near to $R$ and the prime divisors $V$ of $R$. This correspondence
is defined by associating with $T$ the order valuation ring $V$ of $T$.   Since $V$ is the unique  local quadratic transform of $T$
of dimension one,  the local quadratic sequence in (\ref{e1})  extends to give (\ref{e11}):
\begin{equation} \label{e11} 
R=:R_0 ~\subset~ R_1 ~\subset~ R_2~ \subset \cdots \subset~ R_n=T   ~\subset ~V.
\end{equation}
The one-to-one correspondence between the 
points $T$ infinitely near to $R$ and the prime divisors $V$ of $R$ implies that
$T$ is the unique point  infinitely near to $R$ for which the order valuation ring of $T$ is $V$.
However, if  $\dim R > 2$, then there  often exist 
regular local rings $S$ with $S \ne T$ such that $S$ birationally dominates $R$ 
and the order valuation ring of $S$ is $V$,   cf.  \cite[Example~2.4]{HK1}.    Indeed,  Example~2.6 of \cite{HK1}  
 demonstrates the existence of a prime divisor $V$ 
for a $3$-dimensional RLR  $(R, \m, k)$ for which there 
exist infinitely many distinct $3$-dimensional RLRs that 
birationally dominate $R$, and  have $V$ as their  order valuation ring.
\end{remark} 

\begin{remark} \label{2.113}
Let $(R,\m)$ be a $d$-dimensional RLR with $d \ge 2$ and let $V$ be the order
valuation ring of $R$. Let $(S,\n)$ be a $d$-dimensional RLR that is a birational
extension of $R$. Then
\begin{enumerate}
\item $S$ dominates $R$.
\item If $V$ dominates $S$, then $R = S$.
\item  Thus $R$ is the unique $d$-dimensional RLR having order valuation ring $V$ among
the regular local rings birational over $R$.
\end{enumerate}
\end{remark}

\begin{proof}  For item (1), let $P := \n \cap R$. Then $R_P \subseteq S$. If $P \ne \m$, 
then $\dim R_P = n < d$. Since every  birational extension of an $n$-dimensional Noetherian
domain has dimension at most $n$, we must have $\dim S \le n$, a contradiction. Thus $S$
dominates $R$. Item (2) follows from \cite[Theorem~2.1]{Sa2}.  In more detail, if $V$ dominates
$S$, then $R/\m = S/\n$ and the elements in a minimal generating set for $\m$ are part of 
a minimal generating set for $\n$. Hence we have $\m S = \n$. By Zariski's Main Theorem as in
\cite[(37.4)]{N}, it follows that $R = S$. Item (3) follows from item (2).
\end{proof}

\begin{definition} \label{2.12} 
A {\bf base point} of a nonzero ideal $I \subset R$ is a point $T$  infinitely near 
 to  $R$ such 
that $I^{T} \neq T$. The set of base points of $I$ is denoted by
$$
\mathcal{BP}(I)=\{~ T~\vert  ~T \text{ is a point such that}~~R \prec T ~\text{and}~\ord_T (I^T)\neq 0~\}.
$$
The {\bf point basis} of a nonzero ideal $I \subset R$ is the family of nonnegative integers
$$
\mathcal B(I)=\{~\ord_T (I^T) ~\vert~ R~ \prec~ T ~\}.
$$
The nonzero ideal  $I$ is said to be {\bf finitely supported} if  
$I$ has only finitely many base points.
\end{definition}

\begin{definition} \label{2.13}
Let $R \prec T$ be points such that $\dim R=\dim T$. Lipman proves in \cite[Proposition~2.1]{L}
the existence of a unique complete ideal $P_{RT}$ in $R$ such that for every point $A$
with $R \prec A$,  the 
complete transform 
$$
\overline{(P_{RT})^A} ~ \text{ is }  ~
\begin{cases}
\text{ a $*$-simple ideal if  } ~ A \prec T, \\ \text{ the ring } A \text{ otherwise.} 
\end{cases}  
$$
The ideal $P_{RT}$ of $R$ is said to be  a {\bf special $*$-simple complete ideal}. 

In the case where $R \prec T$ and  $\dim R = \dim T$, we say that 
the order valuation ring of $T$
is a {\bf special prime divisor} of $R$.
\end{definition} 

\begin{remark} \label{2.131}
With notation at in Definition~\ref{2.13},  a prime divisor $V$ of $R$ is 
special if and only if the unique point $T$ with $R \prec T$ such that the
order valuation ring of $T$ is $V$ has $\dim T = \dim R$. Let $\dim R = d$. 
If  $V$ is a special prime divisor of $R$, then the residue field of $V$ is 
a pure transcendental extension of degree $d - 1$ of the residue field $T/\m(T)$
of $T$, and $T/\m(T)$ is  
a finite algebraic
extension of $R/\m$.  If the residue field $R/\m$ of $R$ is 
algebraically closed and $V$ is a special prime divisor of $R$, then the 
residue field of $V$ is a pure transcendental extension of $R/\m$ of transcendence
degree $d - 1$.  
It would be interesting to  identify   and describe in other ways   the special prime divisors of $R$ 
among the set of all prime divisors of $R$.
\end{remark}

\begin{remark} \label{lipman1}
Let $R = \alpha$ be a $d$-dimensional regular local ring with $d \ge 2$.
 Lipman in \cite[Theorem~2.5]{L}
proves that for every finitely supported complete ideal $I$ of $R$ there exists a 
unique family of integers 
$$
(n_\beta) ~ = ~ (n_\beta(I))_{\beta \succ \alpha, ~ \dim \beta = \dim \alpha}
$$
such that $n_\beta = 0$ for all but finitely many $\beta$  and such that 
\begin{equation} \label{es30}
\Big( \prod_{n_\delta < 0} P_{\alpha\delta}^{-n_\delta} \Big)*I ~=~ 
\prod_{n_\gamma > 0}P_{\alpha\gamma}^{n_\gamma}
\end{equation}
where $P_{\alpha\beta}$ is the special $*$-simple ideal associated with $\alpha \prec \beta$
and the products are $*$-products.  The product 
on the left is  over all $\delta \succ \alpha$ such that $n_\delta < 0$ and the product on 
the right is over all $\gamma \succ \alpha$ such that $n_\gamma > 0$.

A question of interest is for which finitely supported complete ideals $I$ the unique factorization 
of $I$ given in Equation~\ref{es30}  involves special $*$-simple ideals $P_{\alpha \delta}$ with  $n_{\delta} < 0$.  If 
$\dim R = 2$ there are no negative exponents and every $*$-simple complete ideal is special $*$-simple.
\end{remark}

Lipman gives the following example to illustrate this decomposition.

\begin{example} \label{s3.1} Let $k$ be a field and let $\alpha = R =  k[[x,y,z]]$ be the
formal power series ring in the 3 variables $x,y,z$ over $k$.  Let 
$$
\beta_x = R[\frac{y}{x}, \frac{z}{x}]_{(x, y/x, z/x)},  \quad 
\beta_y = R[\frac{x}{y}, \frac{z}{y}]_{(y, x/y, z/y)},   \quad 
\beta_z = R[\frac{x}{z}, \frac{y}{z}]_{(z, x/z, y/z)} 
$$ 
be the local quadratic transformations of $R$ in the $x,y,z$ directions. The associated 
special $*$-simple ideals are
$$
P_{\alpha \beta_x} = (x^2, y, z)R, ~ \quad P_{\alpha \beta_y} = (x,y^2, z)R, ~ \quad
P_{\alpha \beta_z} = (x,y,z^2)R.
$$
The equation
\begin{equation} \label{es1}
(x,y,z)(x^3, y^3, z^3, xy, xz, yz) ~= ~ P_{\alpha \beta_x} P_{\alpha \beta_y} P_{\alpha \beta_z}
\end{equation}
represents the factorization of the finitely supported ideal $I = (x^3, y^3, z^3, xy, xz, yz)R$
as a product of special $*$-simple ideals. Here $P_{\alpha \alpha} = (x,y,z)R$. The base points
of $I$ are 
$\mathcal {BP}(I) = \{\alpha, \beta_x, \beta_y, \beta_z\}$ and the point basis of $I$ is
$\mathcal B(I) = \{2,1,1,1\}$. Equation~(\ref{es1}) represents the following equality of 
point bases
$$
\mathcal B(P_{\alpha\alpha})~ + ~ \mathcal B(I) ~=~ \mathcal B(P_{\alpha \beta_x})
~+ ~ \mathcal B( P_{\alpha \beta_y}) ~+~ \mathcal B( P_{\alpha \beta_z}).
$$
Each of $P_{\alpha \beta_x},  P_{\alpha \beta_y},  P_{\alpha \beta_z}$ has a unique Rees
valuation. Their product has in addition the order valuation of $\alpha$ as a Rees valuation. 
  Lipman's unique factorization theorem given in Equation~\ref{es30}  along with 
Fact~\ref{order1} imply that  the ideal  $I$ is $*$-simple.
To see this,  suppose by way of contradiction that  $I$ has a non-trivial $*$-factorization $I = L * W$.  
Then $L$ and $W$  have order $1$, and hence by Fact~\ref{order1} are  special $*$-simple ideals.  
But this 
contradicts  Lipman's   unique factorization theorem for  $I$ as a product of special $*$-simple ideals as stated in Equation~\ref{es30}.  
We conclude that $I$ is $*$-simple.
\end{example}

\begin{remark}  \label{factprop}  (1)  The finite set $\mathcal {BP}(I)$ of base points of the finitely supported
complete ideal $I$ of Remark~\ref{lipman1} is a partially ordered set  with respect  to inclusion.
This partially ordered set is a rooted tree with unique minimal element $\alpha = R$. 
For each base point $\beta$ of $I$ there is a unique finite sequence of base points
$\alpha \prec \gamma_1 \prec \cdots  \prec \gamma_n = \beta$ of $I$, where $\gamma_1$ is a  local 
quadratic transform of $\alpha$,  and each $\gamma_{i+1}$ is a  local quadratic transform 
of $\gamma_i$. Thus there exists a unique path from $\alpha$ to each base point $\beta$. 
A base point $\beta$ of $I$ is a {\bf maximal base point}   of $I$ if   
$\beta$ is a maximal element in the 
partially ordered set $\mathcal {BP}(I)$,  that is, if $\beta \prec \gamma$ with $\gamma$ a 
base point of $I$,  then  $\beta = \gamma$.  In the unique $*$-factorization of 
$I$ given in Equation~\ref{es30}, the integer $n_{\beta}$ associated to 
each maximal  base point $\beta$  of $I$ is equal to the point basis of $I$ at $\beta$ and hence  is positive.\\
(2)  Let $I$ be a finitely supported ideal of a $d$-dimensional  regular local ring $(R,\m)$.  For each $x \in \m \setminus \m^2$,
Corollary 1.22 of \cite{L}    implies that  
the transform $I^{S}$ of $I$ in the ring $S = R[\m/x]$ is either the  ring  $S$ or an ideal 
of height $d$ in $S$.
 \end{remark}

 \begin{fact}  \label{order1}
Let $J$ be a finitely supported complete  $\m$-primary  ideal of a  $d$-dimensional  regular local ring $(R,\m)$.
If $J$ has order one, then $J$ is  a special $*$-simple ideal  and has the form 
$J = (x_1, \ldots, x_{d-1},~ x_d^{n})R$,    for  some positive integer $n$  and regular system of parameters $x_1, \ldots, x_d$ 
for $R$.
\begin{proof}
Since $J$ has order one,  either $J = \m$  or there exists a positive  integer $e  <  d$ and a regular system of 
parameters $x_1, \ldots, x_d$  for $R$ such that $J = (x_1, \ldots, x_e) R + J'$,  where  the image of $J'$ in the 
regular local ring $\frac{R}{(x_1, \ldots, x_e)R}$ has order $n \ge 2$  and  where also $\ord_{R} J' = n$.
Since $J$ is $\m$-primary, $J'$ contains some power of $x_d$.
Assume that $J \ne \m$,  and let $S = R [\frac{\m}{x_d}]$.
Then the transform $J^{S}$ of $J$ in $S$ is
$$
J^{S ~} =  ~\big( \frac{x_1}{x_d}, \ldots, \frac{x_{e}}{x_d} \big)S ~ +  ~ x_d^{n-1} J'^{S}.
$$
This equality and the fact that $J'^{S}$ contains some power of $x_d$ 
implies that $J^{S}$ has radical $(\frac{x_1}{x_d}, \ldots, \frac{x_e}{x_d}, ~ x_d)S$.
Hence the unique minimal prime of $J^{S}$    is $(\frac{x_1}{x_d}, \ldots, \frac{x_e}{x_d}, x_d)S$.
By item~2 of Remark~\ref{factprop},  the ideal    $J^{S}$   has height $d$.  Therefore
$e = d-1$.  
Since $ \frac{R }{ (x_1, \ldots, x_{d-1}) R}$ is a DVR  whose maximal ideal is generated by the image of $x_d$, 
we have $J = (x_1, \ldots, x_{d-1}, ~ x_d^n)R$.
A simple induction proof yields  that $J$ is special $*$-simple.
\end{proof}
\end{fact}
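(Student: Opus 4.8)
The plan is to use the order-one hypothesis to put $J$ in a normal form, then to study the transform of $J$ after a single quadratic transformation and invoke the finiteness of $\mathcal{BP}(J)$ to pin down the shape of $J$, and finally to identify $J$ as a special $*$-simple ideal. For the normal form: since $\ord_R J = 1$, the image $\overline J$ of $J$ in $\m/\m^2$ is a nonzero $k$-subspace, say $\dim_k \overline J = e$ with $1 \le e \le d$. Lifting a basis of $\overline J$ to elements of $J$ and extending it to a regular system of parameters, I obtain a regular system of parameters $x_1, \ldots, x_d$ for $R$ with $x_1, \ldots, x_e \in J$, so that $J = (x_1, \ldots, x_e)R + J'$ where $J' := J \cap \m^2$. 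If $e = d$, then $\m \subseteq J$, so $J = \m = (x_1, \ldots, x_{d-1}, x_d)R = P_{RR}$ and we are done; so assume $e < d$. Then $J \ne \m$, and since $J$ is $\m$-primary of height $d > e$, the ideal $J'$ is nonzero; moreover $J'$ is $\m$-primary (it contains a power of $\m$) with $\ell := \ord_R J' \ge 2$.

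Next I would blow up. Set $S := R[\m/x_d]$, a $d$-dimensional regular UFD by Definition~\ref{2.1}. Using $\m S = x_d S$ and $\ord_R J = 1$, a direct computation gives
$$
J^S \;=\; (x_1/x_d, \ldots, x_e/x_d)\,S \;+\; x_d^{\,\ell - 1}(J')^S .
$$
Because $J$ is $\m$-primary, $x_d^m \in J$ for some $m$, and in fact $m \ge 2$ since $\overline{x_d} \notin \overline J$; cancelling $x_d$ in $JS = x_d J^S$ yields $x_d^{m-1} \in J^S$. Together with $(x_1/x_d, \ldots, x_e/x_d)S \subseteq J^S$ this gives $Q \subseteq \rad(J^S)$, where $Q := (x_1/x_d, \ldots, x_e/x_d, x_d)S$; conversely $x_d^{\ell - 1}(J')^S \subseteq x_d S \subseteq Q$ because $\ell - 1 \ge 1$, so $J^S \subseteq Q$. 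Hence $J^S \ne S$ and $\rad(J^S) = Q$. Since $S/Q$ is a polynomial ring over $k$, the ideal $Q$ is prime, and localizing $S$ at its maximal ideal $(x_1/x_d, \ldots, x_{d-1}/x_d, x_d)S$ — a $d$-dimensional regular local ring — shows that $Q$ is generated there by $e + 1$ members of a regular system of parameters, so $\hgt Q = e + 1$ and therefore $\hgt J^S = e + 1$. But $J$ is finitely supported, so by Remark~\ref{factprop}(2) the ideal $J^S$ is either $S$ or has height $d$; as $J^S \ne S$, we conclude $e + 1 = d$, that is, $e = d - 1$. Then $R/(x_1, \ldots, x_{d-1})R$ is a DVR; the image of $J$ in it is $(\overline{x_d}^{\,n})$ for some $n \ge 2$ (namely its order), and since $(x_1, \ldots, x_{d-1})R \subseteq J$ we obtain $J = (x_1, \ldots, x_{d-1}, x_d^n)R$.

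It remains to show that $J$ is special $*$-simple. That $J$ is $*$-simple is automatic: $\ord_R$ is additive on $*$-products and $\ord_R J = 1$, so any factorization $J = L * W$ forces $\ord_R L = 0$ or $\ord_R W = 0$, hence $L = R$ or $W = R$. To see that $J$ is \emph{special} $*$-simple, I would iterate the transform computation: setting $x_i^{(0)} := x_i$ and, inductively, letting $R_{i+1}$ be the local quadratic transform of $R_i$ at the prime generated by $x_1^{(i)}/x_d^{(i)}, \ldots, x_{d-1}^{(i)}/x_d^{(i)}, x_d^{(i)}$, one checks that the transform of $J$ in $R_i$ is $(x_1^{(i)}, \ldots, x_{d-1}^{(i)}, (x_d^{(i)})^{\,n-i})R_i$ for $0 \le i \le n-1$ — a complete ideal of order one, hence $*$-simple — while the transform of $J$ is the whole ring at every other point infinitely near $R$ (at each stage the image of the transform in $\m(R_i)/\m(R_i)^2$ has codimension one, so there is a single base point in the next neighborhood). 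Thus $J$ has exactly the transform behavior characterizing the special $*$-simple ideal $P_{R,R_{n-1}}$ of Definition~\ref{2.13}, so $J = P_{R,R_{n-1}}$. I expect the main obstacle to be the single-blow-up step: one must compute $J^S$ precisely and, above all, identify $\rad(J^S)$ as the prime $Q$ of height \emph{exactly} $e + 1$, so that the height-$d$ constraint coming from finite support forces $e = d - 1$; by comparison the normal-form reduction is routine linear algebra and the concluding identification is routine given Lipman's construction of the special $*$-simple ideals.
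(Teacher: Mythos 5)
Your proposal is correct and follows essentially the same route as the paper: put $J$ in the normal form $J = (x_1,\ldots,x_e)R + J'$, compute the single quadratic transform $J^S$ over $S = R[\m/x_d]$, identify $\rad(J^S)$ as the prime $(x_1/x_d,\ldots,x_e/x_d,x_d)S$ of height $e+1$, invoke Remark~\ref{factprop}(2) to force $e+1 = d$, and then iterate to see $J = P_{R,R_{n-1}}$. The only genuine (and welcome) refinements are cosmetic: you pin $J'$ down cleanly as $J \cap \m^2$ rather than the paper's more loosely specified lift, and you observe separately that $*$-simplicity already follows from $\ord_R J = 1$ together with the additivity of order under $*$-products.
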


\section{Point basis and change of direction }\label{c3}

\begin{setting} \label{7.1} Let $(R,\m)$ be a regular local ring, and 
consider the sequence 
\begin{equation} \label{e2}
R=:R_0 ~\subset~ R_1 ~\subset~ R_2~ \subset \cdots \subset~ R_n=T \quad (n \geq 2),
\end{equation}
where $R_{i+1}$ is a local quadratic transform of $R_i$ for $i=0,1, \ldots, n-1$.   
\end{setting}

\begin{remark} \label{7.1.5}  Let notation be as  in Setting~\ref{7.1}
and assume $\dim R = \dim T$. 
\begin{enumerate}
\item
If  $\dim R = 2$, then the special $*$-simple complete ideal $P_{RT}$ has
a unique Rees valuation $\ord_T$.  
\
\item
In the higher dimensional case, the  special $*$-simple complete ideal $P_{RT}$ has
$\ord_T$ as a Rees valuation and often also has other Rees valuations. We observe in
\cite[Proposition~ 6.4]{HK1} that the other Rees valuations of $P_{RT}$ are in the set
$\{\ord_{R_i} \}_{i=0}^{n-1}$.
\item
The residue field $R_n/\m_n$ of $R_n$ is a finite algebraic
extension of the residue field $R_0/\m_0$ of $R_0$. 
\begin{enumerate}
\item
If $R_0/\m_0 = R_n/\m_n$,  then  
the other Rees valuations of $P_{RT}$ are in the set
$\{\ord_{R_i} \}_{i=0}^{n-2}$, cf. \cite[Corollary~ 4.11]{HK1}.
\item
If  $R_0/\m_0 \subsetneq  R_n/\m_n$, then 
 $\ord_{R_{n-1}}$ may be a Rees valuation of  $P_{RT}$, cf. \cite[Example~6.11]{HK1}.
\end{enumerate}
\end{enumerate}

\end{remark}

\begin{definition} \label{7.11d} We say {\bf there is no change of direction} for 
the local quadratic sequence $R_0$ to $R_n$ in Equation~(\ref{e2}) if there exists 
an element $x \in \m_0$ that is part of a minimal generating set of $\m_n$.  We
say {\bf there is a change of direction} between $R_0$ and $R_n$ if $\m_0 \subseteq \m_n^2$. 
\end{definition} 

\begin{remark}\label{7.12} Concerning  change of direction for 
the local quadratic sequence $R_0$ to $R_n$ of Equation~(\ref{e2}), 
the following statements are equivalent:
\begin{enumerate}
\item  There exists 
 $x \in \m_0$ that is part of a minimal generating set of $\m_n$.
\item $\m_0 \nsubseteq \m_n^2$.
\item  $\m_0 \nsubseteq \m_i^2$  for all $i=0,1,\ldots, n$.
\item  There exists $x \in \m_0$ such that 
$\ord_{R_i}(\m_0)=\ord_{R_i}(x)=1$ for all $i=0,1,\ldots, n$.
\item   There exists $x \in \m_0$ such that 
$\m_i R_{i+1}=x R_{i+1}$ for all $i=0,1,\ldots, n-1$.
\end{enumerate}
\end{remark}

\begin{remark} \label{7.2} With notation as in Setting~\ref{7.1},  assume that 
$\dim R = \dim R_n$.  Let $I = P_{R_0R_n}$. 
By \cite[Corollary~2.2]{L}, the transform $I^{R_i} = P_{R_iR_n}$ for
all $i$ with $0 \le i \le n$. By \cite[Proposition~4.6]{HK1}, we have
$\Rees_{R_i} I^{R_i} \subseteq \Rees I$.  Thus for each $i$ with $0 \le i \le n$, we have
$$
\Rees_{R_i} P_{R_iR_n} ~ = ~ \Rees_{R_i} I^{R_i} ~ \subseteq ~ \Rees I,
$$
and the number of Rees valuations of $I$ is greater than or equal to the
number of Rees valuations of $P_{R_iRn}$. 
\end{remark}

\begin{lemma}\label{7.4}
With notation as in Setting~\ref{7.1} assume that $d:=\dim R=\dim R_1$ and $R/\m=R_1/\m_1$.
Let $J \subseteq \m$ be a nonzero ideal of $R$, and let $J^{R_1}$ denote the transform of $J$ in $R_1$.
Then $  \ord_R(J)\geq \ord_{R_1}(J^{R_1})$.
\end{lemma}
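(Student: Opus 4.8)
The plan is to compute both orders explicitly in terms of a regular system of parameters adapted to the quadratic transform, using the "no change of direction" setup that the hypothesis $\dim R = \dim R_1$ together with $R/\m = R_1/\m_1$ allows us to arrange. First I would fix $x \in \m \setminus \m^2$ with $R_1 = R[\m/x]_\p$ where $\p$ is the maximal ideal of $R[\m/x]$ lying over $\m$. Since $R/\m = R_1/\m_1$ and $\dim R_1 = d$, the images of $\m/x$ in $R_1/xR_1$ generate a maximal ideal, so (after a linear change) we may take a regular system of parameters $x = x_1, x_2, \ldots, x_d$ of $R$ so that $x_1, x_2/x_1, \ldots, x_d/x_1$ is a regular system of parameters of $R_1$; in particular $\m R_1 = x_1 R_1$ and $\ord_{R_1}(x_1) = 1$.

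Next I would unwind the definition of the transform. Writing $r := \ord_R(J)$, we have $J R_1 = x_1^r J^{R_1}$ inside $R_1$ (this is exactly the relation $I S_1 = x^r I_1$ recorded in Definition~\ref{2.1}, localized at $\p$; note $x_1 = x$ generates $\m R_1$). Therefore for any $f \in J$ with $\ord_R(f) = s \ge r$, the element $f$, expanded in $R_1$, is divisible by $x_1^{r}$ and $f/x_1^{r} \in J^{R_1}$, so $\ord_{R_1}(J^{R_1}) \le \ord_{R_1}(f/x_1^{r}) = \ord_{R_1}(f) - r$. Thus it suffices to produce a single $f \in J$ with $\ord_{R_1}(f) \le \ord_R(f) = r$, i.e. with $\ord_{R_1}(f) \le r$, because then $\ord_{R_1}(J^{R_1}) \le r - r \cdot 0$… more precisely we need $\ord_{R_1}(f/x_1^r) \le \ord_R(J) = r$, which reduces to $\ord_{R_1}(f) \le 2r$ — so actually the cleanest route is: pick $f \in J$ with $\ord_R(f) = r$, show $\ord_{R_1}(f) \le$ (something controllable), and conclude. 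The key elementary fact is that for the homogeneous leading form $f^* \in \gr_\m(R) = (R/\m)[X_1,\ldots,X_d]$ of degree $r$, passing to $R_1$ corresponds to dehomogenizing by $X_1$: one shows $\ord_{R_1}(f) = r + \ord$ of the strict transform, but in fact $\ord_{R_1}(f) \le r \cdot$(nothing)$= \deg_{X_1\text{-adic}}$... The precise statement I would prove is that $\ord_{R_1}(f) = \ord_{R_1}(x_1^r \cdot \overline{f})$ where $\overline f \in R_1$ is the image of $f$ in $J^{R_1}$-coordinates, and that $\ord_{R_1}(\overline f) \le \ord_R(f) = r$ need NOT hold elementwise — rather one works with the ideal directly: $\ord_{R_1}(J^{R_1}) = \min_{f \in J}\{\ord_{R_1}(f) - r\}$, and I must bound the min.

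**The cleaner argument** I would actually carry out: choose $f \in J$ with $\ord_R(f) = r = \ord_R(J)$, and let $F \in (R/\m)[X_1,\ldots,X_d]$ be its leading form, a nonzero form of degree $r$. Since $x_1 = x$ is a variable we may write $F = X_1^{a} G$ with $G$ not divisible by $X_1$, $0 \le a \le r$. Then in $R_1$ the element $f$ factors as $f = x_1^{r} \cdot u$ where the image of $u$ modulo $\m_1$ is (up to a unit) the dehomogenization $G(1, X_2/X_1, \ldots, X_d/X_1)$ evaluated appropriately — and this dehomogenized form is a nonzero element of $R_1/\m_1 = R/\m$ precisely when $\deg G$ in the remaining variables, read at the point $\p$, is zero, i.e. $u$ is a unit when $a = r$; in general $\ord_{R_1}(u) = \ord_{R_1}$ of that dehomogenization $\le \deg G = r - a \le r$. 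Since $u \in J^{R_1}$ (as $f \in J$ and $JR_1 = x_1^r J^{R_1}$), we get $\ord_{R_1}(J^{R_1}) \le \ord_{R_1}(u) \le r - a \le r = \ord_R(J)$, which is the desired inequality.

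**The main obstacle** is the bookkeeping connecting the leading form $F \in \gr_\m(R)$ of $f$ with the order of $f/x_1^r$ in $R_1$ — specifically verifying that dehomogenizing $F$ by the variable $X_1$ and then localizing at $\p$ cannot raise the order above $\deg_{X_2,\ldots,X_d} $ of the $X_1$-free part, and that $f/x_1^{r}$ genuinely lies in $J^{R_1}$ rather than merely in its extension. Both points follow from $S_1/x_1 S_1 \cong (R/\m)[X_2/X_1,\ldots,X_d/X_1]$ (Definition~\ref{2.1}) together with the hypothesis $R/\m = R_1/\m_1$, which forces $\p$ to be the irrelevant maximal ideal $(x_1, X_2/X_1, \ldots, X_d/X_1)$ of this polynomial ring and hence makes $\ord_{R_1}$ restrict on $S_1/x_1S_1$ to the usual degree (= order at the origin) — so no subtleties from $\p$ corresponding to a positive-dimensional subvariety can arise. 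Once that identification is in place the inequality is immediate; if $J R_1 = R_1$ then $J^{R_1} = R_1$ and the inequality $\ord_R(J) \ge 0 = \ord_{R_1}(R_1)$ is trivial, so that degenerate case needs only a one-line remark.
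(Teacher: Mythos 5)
Your argument is essentially the same as the paper's: both reduce to showing $\ord_{R_1}(f/x^r)\le r$ for a single $f\in J$ of order $r$ by identifying the image of $f/x^r$ in $R_1/xR_1$ with the dehomogenization $f^*/X^r$ of the leading form (a polynomial of degree at most $r$, hence of order at most $r$ at the relevant maximal ideal). Two small slips that do not affect the conclusion: the image you want is modulo $x_1R_1$, not modulo $\m_1$, and where you write $\ord_{R_1}(u) = \ord_{R_1}$ of the dehomogenization, the correct relation is only the inequality $\ord_{R_1}(u)\le\ord_{R_1/x_1R_1}(\bar u)$ (reduction modulo a regular parameter can only raise order), which is exactly what the proof needs.
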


\begin{proof}
By appropriate choice of a regular system of parameters $x, y, \ldots, z$ for $R$,  
we may assume that $R_1$ is a localization of  $S = R [\frac{\m}{x}]$  at the 
maximal ideal  $(x, \frac{y}{x}, \ldots, \frac{z}{x})S$.
To show $\ord_{R} (J) \geq \ord_{R_1} (J^{R_1})$,
it suffices to show that for $f \in J$ such that $\ord_{R} (f) = \ord_{R} (J) = r$, it follows that $\ord_{R_1} (\frac{f}{x^r}) \leq r$.
To prove this, we identify more explicitly certain isomorphisms connected with the transform.
Consider the Rees algebra 
$R [\m t] = \bigoplus_{n \ge 0}   \m^nt^n$ 
and the associated graded ring $\gr_{\m} (R) := R [\m t] / \m R [\m t] \cong k [X, Y, \ldots, Z]$, 
a polynomial ring in $d$ variables over the field $k := R / \m$.
For $f \in R$ with $\ord_{R} f = r$, let $f^*$ denote the leading form of $f$ in $\gr_{\m} (R)$,
that is, $f^{*}$ is the image of $f t^{r} \in \m^rt^r \subset  R [\m t]$ 
in its quotient $\gr_{\m} (R)$.

As a $\Z$-graded ring,  we have  $ R [\m t] [\frac{1}{x t}]   = S [x t, \frac{1}{x t}]$ is a Laurent polynomial ring in $xt$ over $S$,
and  $\frac{f}{x^r} = \frac{f t^r}{(x t)^r}$ is in $S$.   Since  $\m S = x S$, we   obtain by permutability of localization 
and residue class formation 
\begin{equation}  \label{rees6}
S \big[x t, \frac{1}{x t} \big] / x S \big[x t, \frac{1}{x t} \big] ~ =   ~(S / x S) \big[x t, \frac{1}{x t} \big] ~ \cong   ~\gr_{\m} (R) \big[ \frac{1}{X } \big].
\end{equation}
The isomorphism in Equation~\ref{rees6} identifies  $\frac{1}{xt}$  with $\frac{1}{X}$, and  
the ring  $S / x S$ with the polynomial ring in $d-1$ variables  $A := k [\frac{Y}{X}, \ldots, \frac{Z}{X}]$ 
over the field $k$. 
Equation~\ref{rees6}  also identifies  the image  $w$  of $\frac{f}{x^r}$ in $S / x S$
with $ \frac{f^*}{X^r}$,  a polynomial in $A$ of degree $\le r$.  
 Since $R_1$ is the localization of $S$ at the maximal ideal $(x, \frac{y}{x}, \ldots, \frac{z}{x})S$,   the $d-1$-dimensional regular local ring 
$R_1 / x R_1$ is isomorphic to  the localized polynomial ring $B := k[\frac{Y}{X}, \ldots, \frac{Z}{X}]_{(\frac{Y}{X}, \ldots, \frac{Z}{X})}$.
For a polynomial $g \in A$, we have $\deg g \le \ord_B(g)$.  
Therefore $\ord_{B} (w) \le r$.
Since $\ord_{R_1} (\frac{f}{x^r}) \le \ord_{R_1 / x R_1}(w) = \ord_B(w)$, we conclude that $\ord_{R_1} (\frac{f}{x^r}) \le r$.
\end{proof}

We observe in Remark~\ref{7.4r} that Lemma~\ref{7.4} holds even without the assumption that $R / \m = R_1 / \m_1$.

\begin{remark}\label{7.4r}
Assume that $R \subset R_1$ are $d$-dimensional regular local rings and that 
$R_1$ is a local quadratic transform of $R$ with $R/\m \subsetneq  R_1/\m_1$.  We  may 
assume that $R_1$ is a localization of $S = R[\frac{\m}{x}]$ with respect to a maximal ideal of 
$S$ that contains $x$.  As in the proof of Lemma~\ref{7.4}, we obtain a polynomial ring $A$ in
$d-1$ variables over $k = R/\m$  such that $R_1/xR_1$ is isomorphic to $A_N$,
where $N$ is a maximal ideal of $A$.  Since $R/\m \subsetneq  R_1/\m_1$, the field  $A/N$ 
is a proper finite algebraic extension of $k$.   

For $A$ a polynomial ring in $n$ variables over a field $F$ and $N$ a maximal ideal of $A$, we prove by induction on $n$ the following statement:
\begin{itemize}
\item For each polynomial $h \in A$, we have $\ord_{N} (h) \le \deg h$.
\end{itemize}
This is clear for $n = 1$ by factoring $h$ as a product of irreducible polynomials.
If $n > 1$ and $A = F [t_1, \ldots, t_n]$,  then $P = N \cap F [t_n]$ is a maximal ideal of $F [t_n]$ by Hilbert's Nullstellensatz. 
Hence  $P$  is generated by  an irreducible polynomial $p (t_n) \in N \cap F [t_n]$.
Let $B = A / (N \cap F [t_n]) \cong E [t_1, \ldots, t_{n-1}]$, where $E = F [t_n] / p (t_n) F [t_n]$ is a finite algebraic field extension of $F$.
Let $\overline{h}$ denote the image of $h$ in $B$ and let $\overline{N}$ denote the image of $N$ in $B$.
By the induction hypothesis, $\ord_{\overline{N}} (\overline{h}) \le \deg \overline{h}$.
Since $\deg \overline{h} \le \deg h$ and $\ord_{\overline{N}} (\overline{h}) \ge \ord_{N} (h)$, the claim follows.

This implies for an ideal $J$ in $R$ that $\ord_R J \ge \ord_{R_1} J^{R_1}$ also in 
the case where $R/\m \subsetneq R_1/\m_1$.
\end{remark}

\begin{proposition}\label{7.3}  Assume notation as in Setting~\ref{7.1} with 
$\dim R = \dim R_n$ and $R_0/\m_0 = R_n/\m_n$.   Let $I = P_{R_0R_n}$.
 Then:
\begin{enumerate}
\item
 $ \ord_{R_n}(I^{R_n})=\ord_{R_{n-1}}(I^{R_{n-1}})=1$
\item 
$ \ord_{R_i}(I^{R_i}) \geq \ord_{R_{i+1}}(I^{R_{i+1}})$ for $i$ with $0 \leq i \leq n-1$.
\item
$\mathcal B(I)=\{r_0,~r_1,~ \ldots, r_{n-2},~ 1,~ 1\}$
is a decreasing sequence, where $r_i:=\ord_{R_i}(I^{R_i})$ for $i$ with $0 \leq i \leq n$. 
\end{enumerate}
\end{proposition}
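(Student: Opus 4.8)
The plan is to reduce everything to two ingredients already in place — Lipman's identification of the transforms of $I$ (Remark~\ref{7.2}) and Lemma~\ref{7.4} — so first I would set up the bookkeeping. By \cite[Corollary~2.2]{L}, recorded in Remark~\ref{7.2}, $I^{R_i}=P_{R_iR_n}$ for every $i$ with $0\le i\le n$, so $r_i=\ord_{R_i}(P_{R_iR_n})$; and since $P_{R_iR_n}$ is $*$-simple it is a proper nonzero ideal, so $r_i\ge 1$ and each $R_i$ is a base point of $I$. Conversely, by Definition~\ref{2.13} one has $I^A=A$ (so $A\notin\mathcal{BP}(I)$) whenever $R_0\prec A$ but $A\not\prec R_n$, and any point $A$ with $R_0\prec A\prec R_n$ lies on the unique quadratic sequence from $R_0$ to $R_n$, hence is some $R_i$; thus $\mathcal{BP}(I)=\{R_0,\dots,R_n\}$ and $\mathcal B(I)=\{r_0,r_1,\dots,r_n\}$. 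I would also record that the two hypotheses $\dim R=\dim R_n$ and $R_0/\m_0=R_n/\m_n$ pass to every link of the chain: dimension is nonincreasing along a quadratic sequence, while each residue-field step $R_j/\m_j\subseteq R_{j+1}/\m_{j+1}$ is finite algebraic, so the composite being trivial forces each step trivial; thus $\dim R_j=d$ and $R_j/\m_j=R_0/\m_0$ for every $j$.

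With this in hand, (2) is immediate: fix $i$, put $J:=I^{R_i}=P_{R_iR_n}$, a nonzero ideal contained in $\m_i$, and note $J^{R_{i+1}}=P_{R_iR_n}^{R_{i+1}}=P_{R_{i+1}R_n}=I^{R_{i+1}}$; since $R_{i+1}$ is a local quadratic transform of $R_i$ with $\dim R_i=\dim R_{i+1}$ and $R_i/\m_i=R_{i+1}/\m_{i+1}$, Lemma~\ref{7.4} gives $r_i=\ord_{R_i}(J)\ge\ord_{R_{i+1}}(J^{R_{i+1}})=r_{i+1}$. For (1), $r_n=1$ is free: $I^{R_n}=P_{R_nR_n}=\m_n$. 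For $r_{n-1}$, I would show that $P_{R_{n-1}R_n}$ has order $1$. Using $R_{n-1}/\m_{n-1}=R_n/\m_n$, one may choose the regular system of parameters $x_1,\dots,x_d$ of $R_{n-1}$ so that $R_n=(R_{n-1}[\m_{n-1}/x_d])_{\p}$ with $\p=(x_d,\,x_1/x_d,\dots,x_{d-1}/x_d)$, and then set $Q:=(x_1,\dots,x_{d-1},x_d^2)R_{n-1}$. One checks $Q=\{f\in R_{n-1}:\ord_{R_n}(f)\ge 2\}$, so $Q$ is a complete $\m_{n-1}$-primary ideal with $\ord_{R_{n-1}}(Q)=1$; moreover the only base points of $Q$ are $R_{n-1}$ and $R_n$, because $Q$ becomes principal in each chart $R_{n-1}[\m_{n-1}/x_j]$ with $j\le d-1$, while in the chart $R_{n-1}[\m_{n-1}/x_d]$ one has $Q=x_d\,\p$, so the transform of $Q$ there is the maximal ideal $\p$, and $\m_n=\p R_n$ has trivial transform at every point beyond $R_n$. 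By Fact~\ref{order1}, $Q$ is special $*$-simple; since the base points of a special $*$-simple complete ideal form the quadratic chain from its least to its greatest base point, matching base points gives $Q=P_{R_{n-1}R_n}$, whence $r_{n-1}=\ord_{R_{n-1}}(Q)=1$. Then (3) is the synthesis: $\mathcal B(I)=\{r_0,\dots,r_n\}$ is nonincreasing by (2) and has $r_{n-1}=r_n=1$ by (1), so it equals $\{r_0,\dots,r_{n-2},1,1\}$.

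The one step that is not pure formalism is the identification $P_{R_{n-1}R_n}=Q$, and that is where I would be most careful: it rests on checking that $Q=(x_1,\dots,x_{d-1},x_d^2)R_{n-1}$ equals the valuation ideal $\{f\in R_{n-1}:\ord_{R_n}(f)\ge 2\}$ — for which one uses $\m_{n-1}^2\subseteq Q$ — so that $Q$ is complete, and on the one-blowup chart computation yielding $\mathcal{BP}(Q)=\{R_{n-1},R_n\}$. Everything else follows from the transform identities and a single application of Lemma~\ref{7.4}.
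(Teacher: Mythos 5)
Your proof is correct, and for items (2) and (3) it follows the paper's route exactly: item (2) is the single application of Lemma~\ref{7.4} after noting $I^{R_i}=P_{R_iR_n}$ by \cite[Corollary~2.2]{L}, and item (3) is immediate from (1) and (2). The genuine difference is in item (1). The paper simply observes $I^{R_{n-1}}=P_{R_{n-1}R_n}$ and cites \cite[Remark~6.5]{HK1} for the point basis $\{1,1\}$, whereas you give a self-contained derivation: you build $Q=(x_1,\dots,x_{d-1},x_d^2)R_{n-1}$, show it is the $\ord_{R_n}$-valuation ideal of value $\ge 2$ (hence complete and of order $1$), compute $\mathcal{BP}(Q)=\{R_{n-1},R_n\}$ by running over the charts of the blowup, invoke Fact~\ref{order1} to conclude $Q$ is special $*$-simple, and match against Definition~\ref{2.13} to identify $Q=P_{R_{n-1}R_n}$. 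This computation is sound and it buys independence from the external reference; the cost is that the chart-by-chart verification is longer than the paper's one-line citation. Your preliminary observation that $\dim R_j=d$ and $R_j/\m_j=R_0/\m_0$ for all $j$ (degrees multiply, so a trivial composite of finite field extensions forces each step trivial) is exactly the bookkeeping needed to legitimize applying Lemma~\ref{7.4} at every link, and is implicit in the paper.
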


\begin{proof}
(1): Since $I^{R_{n-1}}=P_{R_{n-1}R_{n}}$,  we have $\mathcal B(I^{R_{n-1}})=\{1,~1\}$, by \cite[Remark~6.5]{HK1}.
Item (2) follows from Lemma~\ref{7.4}. Item (3) is immediate from items~(1) and (2). 
\end{proof}

\begin{theorem} \label{3.1}
Let $(R,\m)$ be a universally catenary  analytically 
unramified Noetherian local domain with $\dim R = d$, and 
let $V$ be a prime divisor of $R$ centered on $\m$. Let $I \subseteq \m$ be 
an ideal of $R$. The following are equivalent
\begin{enumerate}
\item $V \in \Rees I$.
\item There exist elements $b_1, \ldots, b_d$ in $I$ such that 
$b_1V = \cdots= b_dV = IV$
and the images of  $~\frac{b_2}{b_1}, \ldots, \frac{b_d}{b_1}$ in the residue 
field $k_v$ of $V$ are algebraically
independent over $R/\m$.
\item If $I = (a_1, \ldots, a_n)R$, then there exist 
elements $b_1, \ldots, b_d$ in $\{a_i\}_{i=1}^n$ such that 
$b_1V = \cdots= b_dV = IV$
and the images of $~\frac{b_2}{b_1}, \ldots, \frac{b_d}{b_1}$ in the residue 
field $k_v$ of $V$ are algebraically
independent over $R/\m$.
\end{enumerate}
Thus  if $I = (a_1, a_2, \ldots, a_d)R$, then $V \in \Rees I  \iff  a_1V = a_2V =\cdots= a_dV$
and the images of $\frac{a_2}{a_1}, \ldots, \frac{a_d}{a_1}$ in 
$k_v$ are algebraically
independent over $R/\m$.
\end{theorem}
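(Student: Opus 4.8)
The plan is to prove the equivalences by running the cycle (3)$\Rightarrow$(2)$\Rightarrow$(1)$\Rightarrow$(3): the implication (3)$\Rightarrow$(2) is immediate, since the elements $b_1,\dots,b_d$ produced in (3) lie in $I$. The displayed assertion at the end is then the case where the number of generators of $I$ equals $d$. The main algebraic input for (2)$\Rightarrow$(1) is the dimension formula: because $R$ is universally catenary, for every finitely generated $R$-subalgebra $T$ of $\mathcal Q(R)$ that is a domain and every prime $Q$ of $T$ with $Q\cap R=\m$ one has $\hgt Q=d-\operatorname{tr.deg}_{R/\m}\kappa(Q)$, where $\kappa(Q)$ is the residue field of $T$ at $Q$ (here $\operatorname{tr.deg}_R T=0$ and $\hgt\m=d$). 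It will be applied to $T=\overline{R[I/b]}$ for suitable $b\in I$; since $R$ is analytically unramified, $\overline{R[I/b]}$ is a finite module over the Noetherian ring $R[I/b]$, hence is itself a normal Noetherian domain.

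\emph{(2)$\Rightarrow$(1).} Given $b_1,\dots,b_d$ as in (2), the equality $b_1V=IV$ gives $R[I/b_1]\subseteq V$, hence $S:=\overline{R[I/b_1]}\subseteq V$. Put $Q:=\m_V\cap S$; then $Q\cap R=\m$, the ring $S_Q$ is dominated by $V$, and $\kappa(Q)=S_Q/QS_Q$ embeds in $k_v$. The elements $b_2/b_1,\dots,b_d/b_1$ lie in $R[I/b_1]\subseteq S$, and their images in $k_v$ — hence already in $\kappa(Q)$ — are algebraically independent over $R/\m$, so $\operatorname{tr.deg}_{R/\m}\kappa(Q)\ge d-1$. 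Since $V$ is a prime divisor, $\operatorname{tr.deg}_{R/\m}k_v=d-1$, forcing $\operatorname{tr.deg}_{R/\m}\kappa(Q)=d-1$ and, by the dimension formula, $\hgt Q=1$. Then $S_Q$ is a DVR dominated by $V$ with the same fraction field, so $S_Q=V$; since $I\subseteq\m_V\cap S=Q$, we conclude $V\in\Rees I$.

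\emph{(1)$\Rightarrow$(3).} Fix generators $I=(a_1,\dots,a_n)R$ and let $v$ be the valuation of $V$. Since $v(I)=\min_i v(a_i)$ is attained, pick $b_1\in\{a_1,\dots,a_n\}$ with $v(b_1)=v(I)$. The crucial point is the claim: \emph{if $V\in\Rees I$ and $b\in I$ has $v(b)=v(I)$, then $S:=\overline{R[I/b]}$ has $Q:=\m_V\cap S$ of height one with $S_Q=V$.} To prove it, write $V=\overline{R[I/c]}_{Q'}$ with $\hgt Q'=1$ and $c\in I$; then $R[I/c]\subseteq V$ forces $v(c)=v(I)=v(b)$, so $b/c\in R[I/c]$ and $c/b\in R[I/b]$ are units of $V$. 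A direct check gives $R[I/b]_{c/b}=R[I/c]_{b/c}=:B$, hence $\overline B=\overline{R[I/b]}_{c/b}=\overline{R[I/c]}_{b/c}$ (normalization commutes with localization). Since $b/c$ is a unit of $V$, $b/c\notin Q'$, so $Q'$ extends to a height-one prime $\widetilde Q$ of $\overline{R[I/c]}_{b/c}=\overline B$ with $\overline B_{\widetilde Q}=V$; reading $\overline B$ instead as $\overline{R[I/b]}_{c/b}$ and contracting $\widetilde Q$ to $S$, we get $Q=\widetilde Q\cap S=\m_V\cap S$ of height one with $S_Q=\overline B_{\widetilde Q}=V$. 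This proves the claim. Applying it with $b=b_1$, we have $S_Q=V$, so $k_v$ is the fraction field of $S/Q$, and $\operatorname{tr.deg}_{R/\m}k_v=d-1$ as $V$ is a prime divisor. Since $S$ is a finite $R[I/b_1]$-module and $R[I/b_1]=R[a_1/b_1,\dots,a_n/b_1]$, the field $k_v$ is algebraic over the subfield of $k_v$ generated over $R/\m$ by the images $\overline{a_i/b_1}$; those $i$ with $v(a_i)>v(I)$ contribute $0$, so $k_v$ is algebraic over $(R/\m)\bigl(\{\,\overline{a_i/b_1}:v(a_i)=v(I)\,\}\bigr)$, a field that therefore has transcendence degree $d-1$ over $R/\m$. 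Choose $d-1$ of these images, say $\overline{a_{i_1}/b_1},\dots,\overline{a_{i_{d-1}}/b_1}$, algebraically independent over $R/\m$, and set $(b_1,b_2,\dots,b_d):=(b_1,a_{i_1},\dots,a_{i_{d-1}})$. Each $v(b_j)=v(I)$, so $b_jV=IV$, and (3) holds.

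The displayed equivalence is the case $n=d$. Its converse direction is the instance $b_i=a_i$ of (2)$\Rightarrow$(1), noting that $a_1V=\cdots=a_dV$ forces $a_iV=IV$. For the forward direction, (3) produces $d$ elements $b_1,\dots,b_d$ inside the $d$-element set $\{a_1,\dots,a_d\}$; algebraic independence of $\overline{b_2/b_1},\dots,\overline{b_d/b_1}$ makes them pairwise distinct, so $\{b_1,\dots,b_d\}=\{a_1,\dots,a_d\}$, and relabelling (the conditions being symmetric in the $a_i$) gives $a_1V=\cdots=a_dV$ together with algebraic independence of $\overline{a_2/a_1},\dots,\overline{a_d/a_1}$. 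The main obstacle is the claim used in (1)$\Rightarrow$(3): that a Rees valuation of $I$ obtained from one affine chart $\overline{R[I/c]}$ of the normalized blowup already appears in the chart $\overline{R[I/b_1]}$ attached to a minimal-value generator $b_1$. The localization identity $R[I/b_1]_{c/b_1}=R[I/c]_{b_1/c}$ together with the compatibility of normalization and localization is what makes this work, and one must check that prime heights are preserved under the relevant localizations.
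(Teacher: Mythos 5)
Your proof is correct and rests on the same circle of ideas as the paper's: identifying $V$ with a localization $\overline{R[I/b]}_Q$ at a height-one prime and pinning down the height via the dimension formula (valid because $R$ is universally catenary and, being analytically unramified, has Noetherian normalizations of its affine blowup charts). The paper runs the cycle $(1)\Rightarrow(2)\Rightarrow(3)\Rightarrow(1)$, while you run $(3)\Rightarrow(2)\Rightarrow(1)\Rightarrow(3)$; your $(2)\Rightarrow(1)$ matches the paper's $(3)\Rightarrow(1)$ essentially verbatim, and your $(1)\Rightarrow(3)$ folds together what the paper splits between $(1)\Rightarrow(2)$ and $(2)\Rightarrow(3)$. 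The one genuine addition in your write-up is the explicit chart-compatibility lemma: that if $V\in\Rees I$ is realized over the chart $\overline{R[I/c]}$, it is also realized over $\overline{R[I/b]}$ for any $b\in I$ with $v(b)=v(I)$, via the identity $R[I/b]_{c/b}=R[I/c]_{b/c}$ and the commutation of normalization with localization. The paper invokes this implicitly when, in its $(2)\Rightarrow(3)$, it asserts without comment that $V$ is centered on a height-one prime of $\overline{R[I/a_1]}$ for the chosen minimal-value generator $a_1$ (an assertion one can alternatively justify directly from hypothesis $(2)$ by applying the dimension formula in that chart, as in $(3)\Rightarrow(1)$). Making this step explicit, as you do, is a worthwhile clarification; the rest, including your tail argument for the $n=d$ case and the symmetry/relabelling observation, is sound.
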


\begin{proof} $(1) \Rightarrow  (2)$: If $V \in \Rees I$, then by definition
$V$ is a localization of the integral closure of the 
ring   $R[I/b]$ for some nonzero $b \in I$.  We have $I/b  \subset V$ and so
$IV = bV$. 
Since 
$V$  has center $\m$ on $R$ and is centered on a prime ideal of height one of 
the integral closure of $R[I/b]$,  
the dimension formula \cite[Theorem~B.5.1, p.~403]{SH}, or \cite[p.~119]{M}, implies that
there exist elements $b_1 := b, b_2, \ldots, b_d$ in $I$ such that
$b_1V = \cdots= b_dV = IV$
and the images of  $~\frac{b_2}{b_1}, \ldots, \frac{b_d}{b_1}$ in the residue 
field $k_v$ of $V$ are algebraically
independent over $R/\m$. \\
$(2) \Rightarrow (3)$:  If $I = (a_1, \ldots, a_n)R$, we may assume that $a_1V = IV$. 
It follows that $R[I/a_1] \subseteq V$ and $V$ is centered on a height-one prime 
of the integral closure of $R[I/a_1]$. We have 
$R[I/a_1] = R[\frac{a_2}{a_1}, \ldots, \frac{a_n}{a_1}]$.
By the dimension formula, there exist elements $b_2, \ldots, b_d$ in $\{a_i\}_{i=2}^n$
such that 
$b_1V = \cdots= b_dV = IV$
and the images of  $~\frac{b_2}{b_1}, \ldots, \frac{b_d}{b_1}$ in the residue 
field $k_v$ of $V$ are algebraically
independent over $R/\m$. \\
$(3) \Rightarrow (1)$: We have $R[\frac{b_2}{b_1},\ldots,  \frac{b_d}{b_1}] \subseteq R[I/b_1]$.
Let $A$ denote the integral closure of
 $R[I/b_1]$ and let $\p$
denote the center of $V$ on $A$. 
We have $\p \cap R = \m$, and
$$
R/\m ~ \subseteq ~A/\p ~\subseteq V/\m_v ~= k_v.
$$ 
Since the images  of $\frac{b_2}{b_1}, \ldots, \frac{b_d}{b_1}$
 in $A/\p  \subseteq k_v$ are algebraically independent over $R/\m$, 
the dimension formula implies that   $\hgt \p = 1$. 
Thus $A_{\p}$ is a DVR that is birationally dominated by $V$.  Hence  $A_{\p} = V$, and 
$V \in \Rees I$.

The last sentence is immediate from the equivalence of items 1, 2, and 3.
\end{proof} 

\begin{proposition}\label{contracted}

Let $(R, \m)$ be a $d$-dimensional regular local ring with $d \ge 2$, let $x \in \m \setminus \m^2$, and let $S = R [\frac{\m}{x}]$.
Let $K$ be an ideal of $R$ that is contracted from $S$, and let $x f \in K$, where $f \in R$.
Then
\begin{enumerate}
\item
$g f \in K$ for each $g \in \m$.
\item
If $\ord_{R} (x f) = \ord_{R} (K)$, then the order valuation $\ord_R$ 
is a Rees valuation of $K$.
\end{enumerate}
\end{proposition}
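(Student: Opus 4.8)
The plan is to prove (1) by a direct computation with the contraction hypothesis, and (2) by producing $d$ elements of $K$ that exhibit the order valuation $\ord_R$ as a Rees valuation of $K$ via Theorem~\ref{3.1}. For (1), extend $x$ to a regular system of parameters $x, y_2, \dots, y_d$ of $R$. For any $g \in \m$ we have $g/x \in S$, so $gf = (g/x)(xf) \in KS$ since $xf \in K$. As $gf$ also lies in $R$ and $K = KS \cap R$ (the meaning of ``$K$ is contracted from $S$''), we conclude $gf \in K$.

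For (2), put $r := \ord_R(K) = \ord_R(xf)$; we may assume $K \ne 0$, hence $f \ne 0$ and $r \ge 1$, so $K \subseteq \m$. Write $\gr_\m(R) = k[X, Y_2, \dots, Y_d]$ with $k = R/\m$ and $X, Y_j$ the leading forms of $x, y_j$. The first thing to pin down is that the order valuation ring $V$ of $R$ is the localization $S_{xS}$ of $S$ at the height-one prime $xS$. This follows from $\m S = xS$ together with the computation in the proof of Lemma~\ref{7.4}: for $a \in R$ with $s := \ord_R(a)$, the image of $a/x^{s}$ in $S/xS$ is identified with the element $a^{*}/X^{s}$ of $\gr_\m(R)[1/X]$, which is nonzero because $\gr_\m(R)$ is a domain and $a^{*} \ne 0$; hence $a/x^{s} \notin xS$ and $\ord_{xS}(a) = s = \ord_R(a)$, so the $xS$-adic valuation on $\mathcal Q(R)$ agrees with $\ord_R$. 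In particular $V = S_{xS}$ is a prime divisor of $R$ centered on $\m$, whose residue field $k_v$ is the field of fractions of $S/xS = k[\frac{Y_2}{X}, \dots, \frac{Y_d}{X}]$ (the polynomial ring in $d-1$ variables appearing in the proof of Lemma~\ref{7.4}). Since $R$ is regular, hence universally catenary and analytically unramified, Theorem~\ref{3.1} applies to $K \subseteq \m$ and the prime divisor $V$.

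Now set $b_1 := xf$ and $b_j := y_j f$ for $j = 2, \dots, d$. By part (1), each $b_j$ lies in $K$. Since $\gr_\m(R)$ is a domain, $\ord_R(ab) = \ord_R(a) + \ord_R(b)$, so from $\ord_R(xf) = r$ and $\ord_R(x) = \ord_R(y_j) = 1$ we get $\ord_R(f) = r - 1$ and $\ord_R(b_j) = r$ for all $j$. Hence $b_1 V = \cdots = b_d V = \m^{r} V = KV$ (all equal to $\{w \in \mathcal Q(R) : \ord_R(w) \ge r\}$, using $\ord_R(K) = r$). Moreover $b_j/b_1 = y_j/x$, whose image in $k_v$ is the variable $\frac{Y_j}{X}$, and $\frac{Y_2}{X}, \dots, \frac{Y_d}{X}$ are algebraically independent over $R/\m$. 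By the implication $(2) \Rightarrow (1)$ of Theorem~\ref{3.1}, $V \in \Rees K$; that is, $\ord_R$ is a Rees valuation of $K$.

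The only step requiring genuine care is the identification $V = S_{xS}$ together with the description of $k_v$ and of the images of the elements $y_j/x$ there; I would handle this by reusing the isomorphisms already set up in the proof of Lemma~\ref{7.4}. Everything else is routine: the contraction property for (1), and additivity of $\ord_R$ together with $b_j/b_1 = y_j/x$ for (2).
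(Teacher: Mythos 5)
Your proposal is correct and follows essentially the same route as the paper: item (1) is the same one-line contraction argument, and for item (2) the paper likewise uses item (1) to get $y f, \ldots, z f \in K$, invokes Theorem~\ref{3.1} with $b_1 = xf$, $b_j = y_j f$, and notes that the images of $y_j/x$ in the residue field of the order valuation ring form a transcendence basis over $R/\m$. The extra work you do identifying $V$ with $S_{xS}$ and describing its residue field is a careful unpacking of a standard fact the paper leaves implicit.
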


\begin{proof}
For  item~1, notice that $\frac{g}{x} \in S$  implies  
$x \frac{g}{x} f = g f \in K S \cap R = K$. 
For item~2, let $V$ denote the order valuation ring of $R$.  The assumption that 
$xf \in K$ and $\ord_R xf = \ord_R(K)$ implies 
by item~1   that also  $ y f, \ldots,$ and $ z f$ are in $K$.   We have $K V = xfV = yfV = \cdots = zfV$.     
In the residue field $V / \m_V$ of $V$,  the $d - 1$ elements  
$\overline{\frac{y f}{x f}} = \overline{\frac{y}{x}}, \ldots, \overline{\frac{z f}{x f}} = \overline{\frac{z}{x}}$
 form a transcendence basis for  $V/\m_V$ as an extension field of  $R / \m$.
By Theorem~\ref{3.1}, $V \in \Rees  K$.
\end{proof}

Proposition~\ref{reestrans} is useful for relating the Rees valuations
 of the transform of an ideal to the Rees valuations of the ideal.

\begin{proposition} \label{reestrans}
Let $(R,\m)$ be a $d$-dimensional regular local ring and 
let $(S,\n)$ be a $d$-dimensional regular local ring that birationally dominates $R$. 
Let $I$ be an $\m$-primary ideal of $R$ such that its transform $J = I^{S}$  in $S$  
is not equal to $S$, and let $V$ be a DVR that  birationally dominates $S$.
Then 
$$
V \in ~ \Rees_S J   ~ \iff   ~ V \in ~ \Rees_R I. 
$$
\end{proposition}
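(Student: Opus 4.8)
The plan is to reduce both implications to the characterization of Rees valuations in Theorem~\ref{3.1}, using the elementary observation that a generating set of the transform $J=I^{S}$ is obtained from one of $I$ by dividing out a single common factor. First I would fix generators $I=(a_1,\dots,a_n)R$ and pick $a\in S$ with $aS$ the smallest principal ideal of $S$ containing $IS$ (it exists since $S$ is a UFD); then $J=(b_1,\dots,b_n)S$ with $b_i=a_i/a$, and the hypothesis $J\ne S$ serves only to make $J$ a proper ideal, so that $J\subseteq\n$. I would also record at the outset that $S/\n$ is algebraic over $R/\m$: since $S$ birationally dominates $R$ and $\dim S=\dim R$, a transcendental element of $S/\n$ would, by the dimension formula applied to a simple birational extension $R\subseteq R[\theta]\subseteq S$, produce a prime of $R[\theta]$ of height $<\dim R$ lying under $\n$, forcing $\dim S<\dim R$ (cf.\ the proof of Remark~\ref{2.113}). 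Consequently a finite subset of $k_v$ is algebraically independent over $R/\m$ if and only if it is algebraically independent over $S/\n$.

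The next point is that in either of the two situations $V$ is a prime divisor of both $R$ and $S$, so that Theorem~\ref{3.1} is available for $I\subseteq\m$ in $R$ and for $J\subseteq\n$ in $S$. Indeed, if $V\in\Rees_R I$ then, $I$ being $\m$-primary, $V$ is centered on $\m$ and the dimension formula gives $\operatorname{trdeg}(k_v/(R/\m))=\dim R-1$, so $V$ is a prime divisor of $R$; and then $V$, dominating $S$ with $S/\n$ algebraic over $R/\m$, is a prime divisor of $S$ centered on $\n$. The case $V\in\Rees_S J$ is symmetric, the dimension formula applied to $S\subseteq\overline{S[J/b]}$ giving the claim for $S$ and then for $R$. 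Now the heart of the proof: from $IS=aJ$ and $S\subseteq V$ we get $IV=a\,JV$, whence for every index $i$ one has $a_iV=IV$ if and only if $b_iV=JV$, while $a_i/a_1=b_i/b_1$ in $\mathcal{Q}(R)$. So, if $V\in\Rees_R I$, Theorem~\ref{3.1} applied to the generators $(a_1,\dots,a_n)$ of $I$ yields indices $i_1,\dots,i_d$ with $a_{i_1}V=\cdots=a_{i_d}V=IV$ and with the images of $a_{i_2}/a_{i_1},\dots,a_{i_d}/a_{i_1}$ in $k_v$ algebraically independent over $R/\m$; putting $c_j:=b_{i_j}=a_{i_j}/a$, the identity gives $c_1V=\cdots=c_dV=JV$ and $c_j/c_1=a_{i_j}/a_{i_1}$, so the $c_j/c_1$ are algebraically independent over $R/\m$, hence over $S/\n$, and since the $c_j$ are among the generators $b_1,\dots,b_n$ of $J$, Theorem~\ref{3.1} gives $V\in\Rees_S J$. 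For the converse one argues identically with $I$ and $J$ interchanged: Theorem~\ref{3.1} produces generators $c_1,\dots,c_d$ of $J$ with the stated properties, and then $ac_1,\dots,ac_d$ are among the generators of $I$, satisfy $(ac_j)V=IV$ and $(ac_j)/(ac_1)=c_j/c_1$, and Theorem~\ref{3.1} yields $V\in\Rees_R I$.

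The step that deserves care, and the only place where the assumptions $\dim S=\dim R$ and that $V$ \emph{birationally} dominates $S$ are really used, is the assertion that $V$ is simultaneously a prime divisor of $R$ and of $S$; without the domination hypothesis a Rees valuation of $J$ centered on a minimal prime of $J$ of height $<d$ need not be a Rees valuation of $I$, and conversely $\ord_R$ is the standard example of a Rees valuation of $I$ that does not birationally dominate $S$. Beyond that the argument is formal, resting entirely on the identity $IV=a\,JV$ and on $a_i/a_1=b_i/b_1$, which transfer the numerical conditions of Theorem~\ref{3.1} verbatim between $I$ in $R$ and $J$ in $S$. (As regular local rings, $R$ and $S$ are universally catenary and --- their completions being regular, hence domains --- analytically unramified, so Theorem~\ref{3.1} and the dimension formula apply.)
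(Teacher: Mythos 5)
Your argument is correct and follows essentially the same route as the paper's proof: reduce to Theorem~\ref{3.1}, observe via the dimension formula that $V$ is a prime divisor of $R$ if and only if it is a prime divisor of $S$ (using that $S/\n$ is algebraic over $R/\m$), and then transfer the generator/ratio conditions between $I$ and $J$ using the fact that the generators $b_i = a_i/a$ of $J$ have the same pairwise ratios as the $a_i$. The only cosmetic difference is that you rederive the algebraicity of $S/\n$ over $R/\m$ from the dimension formula, where the paper simply cites Matsumura's Theorem 15.5.
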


\begin{proof}
Since $R$ and $S$ are regular local rings, the hypothesis of Theorem~\ref{3.1} are satisfied. 
Hence $V$ is a Rees valuation ring of an ideal in $R$ or $S$ implies $V$ is a prime divisor
in the sense that the residue field of $V$ has transcendence degree $d-1$ over the
residue field of  $R$ or $S$.    
By \cite[Theorem~15.5]{M},  the
field $S/\n$ is algebraic over $R/\m$ .  Hence     $V$ is a prime divisor over $R$ 
if and only if $V$ is a prime divisor over $S$.

Let $I = (a_1, \ldots, a_n) R$.   Since $J = I^S$, there exists a nonzero $x \in S$    such that $J = (a_1 / x, \ldots, a_n / x) S$.
Since the ratios of the $a_i$ are the same as the ratios of the $a_i / x$,  Theorem~\ref{3.1}  
implies that   $V \in \Rees_S J$ if and only if $V \in \Rees_R I$.
\end{proof}

\begin{remark} With notation as in Theorem~\ref{3.1}, let $\overline{\frac{b_2}{b_1}}, \ldots,
\overline{\frac{b_d}{b_1}}$ denote the images of $\frac{b_2}{b_1}, \ldots, \frac{b_d}{b_1}$ in 
the residue field $k_v$ of $V$. An interesting integer associated with $V \in \Rees I$ and $b_1, \ldots b_d$ is the field degree 
$$
\Big [k_v: (R/\m)\Big (\overline{\big (\frac{b_2}{b_1}\big )},\ldots,  \overline{\big (\frac{b_d}{b_1}\big )}\Big )\Big ]
$$
\end{remark} 

\begin{lemma}\label{7.41}
Let  $(R, \m)$  be a regular local ring 
and let $L$ be an $\m$-primary ideal with $\ord_R(L) = 1$. 
Let $V$ denote the order valuation ring of $R$.  
We have 
$$ 
V ~\in ~\Rees_RL ~~\iff ~~ L ~= ~\m.
$$
\end{lemma}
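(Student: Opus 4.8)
The plan is to derive both implications from Theorem~\ref{3.1}. Since $R$ is regular local, it is a universally catenary, analytically unramified Noetherian local domain, and the order valuation ring $V$ is a prime divisor of $R$ centered on $\m$; hence Theorem~\ref{3.1} applies to $V$ and to any ideal of $R$ contained in $\m$. Write $d=\dim R$, $k=R/\m$, and $v=\ord_R$ for the valuation of $V$. The observation used throughout is that, because $v$ has value group $\Z$ and attains on any nonzero ideal $L\subseteq R$ its minimum value $\ord_R(L)$, one has $LV=\m_V^{\,\ord_R(L)}$; in particular, when $\ord_R(L)=1$ we get $LV=\m_V$, and also $\m_V\cap R=\m$ so that $R$ maps onto $k$ inside $k_v=V/\m_V$.

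For the substantive direction, suppose $V\in\Rees_RL$ with $\ord_R(L)=1$. By Theorem~\ref{3.1} there are elements $b_1,\dots,b_d\in L$ with $b_1V=\cdots=b_dV=LV$ and with the images $\overline{b_2/b_1},\dots,\overline{b_d/b_1}$ in $k_v$ algebraically independent over $k$. From $b_iV=LV=\m_V$ we get $v(b_i)=1$, i.e.\ $b_i\in\m\setminus\m^2$ for each $i$. I claim $b_1,\dots,b_d$ is a regular system of parameters for $R$; granting this, $(b_1,\dots,b_d)R=\m$ by Nakayama, and since $b_i\in L\subseteq\m$ we get $\m\subseteq L\subseteq\m$, hence $L=\m$. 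To prove the claim it suffices to show the images of $b_1,\dots,b_d$ in the $d$-dimensional $k$-vector space $\m/\m^2$ are linearly independent. If not, lift a nontrivial dependence to $f=\sum_{i=1}^d c_ib_i\in\m^2$, with each $c_i\in R$ a unit or zero and not all zero. Then $v(f)\ge 2>1=v(b_1)$, so $f/b_1\in\m_V$, and reducing the identity $f/b_1=\sum_i c_i(b_i/b_1)$ modulo $\m_V$ gives $0=\overline{c_1}+\sum_{i=2}^d\overline{c_i}\,\overline{b_i/b_1}$ in $k_v$, where each $\overline{c_i}$ lies in $k$. Algebraic independence of $\overline{b_2/b_1},\dots,\overline{b_d/b_1}$ over $k$ forces $1,\overline{b_2/b_1},\dots,\overline{b_d/b_1}$ to be $k$-linearly independent, so every $\overline{c_i}=0$, a contradiction. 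This establishes the claim, hence the implication.

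For the converse, suppose $L=\m$; I would again verify the criterion of Theorem~\ref{3.1}, now with a regular system of parameters $x_1,\dots,x_d$ in the role of the $b_i$. Clearly $x_1V=\cdots=x_dV=\m V=\m_V$, and the residue field $k_v$ of the order valuation ring is the purely transcendental extension $k(\overline{x_2/x_1},\dots,\overline{x_d/x_1})$ of $k$: this is the standard identification of $V$ as the localization of $S_1=R[\m/x_1]$ at its height-one prime $x_1S_1$, together with the fact that $S_1/x_1S_1$ is a polynomial ring over $k$ in the images of $x_2/x_1,\dots,x_d/x_1$ (cf.\ Definition~\ref{2.1}). Hence those images are algebraically independent over $k$, and Theorem~\ref{3.1} gives $V\in\Rees_R\m$. (One could instead invoke that $\m$ is a normal ideal whose fiber cone $R[\m t]/\m R[\m t]\cong\gr_\m R$ is a domain, so $\m$ has a single Rees valuation, necessarily $\ord_R$.) The main obstacle is assembling the right facts about $V$ --- that $LV=\m_V$ exactly when $\ord_R(L)=1$, that $R$ surjects onto $k$ inside $k_v$, and the purely transcendental structure of $k_v$ --- after which the forward implication is just the short linear-algebra argument above, in which algebraic independence of the $d-1$ residues $\overline{b_i/b_1}$ is precisely what upgrades to $k$-linear independence of the leading forms of $b_1,\dots,b_d$ in $\m/\m^2$. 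I do not expect to need the associated-graded machinery of Lemma~\ref{7.4}, since the contradiction is obtained directly inside $V$.
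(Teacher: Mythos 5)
Your proof is correct, and it takes a genuinely different route from the paper's. The paper first passes from $R$ to $R(u)$ to arrange an infinite residue field, takes a minimal reduction $J=(a_1,\ldots,a_d)$ of $L$, and then argues by cases: if all $a_i\notin\m^2$ then $J=\m$, while if some $a_i\in\m^2$ then Theorem~\ref{3.1} (applied to the $d$ generators of $J$, which has the same Rees valuations as $L$) rules out $V\in\Rees L$. You instead apply Theorem~\ref{3.1} directly to $L$ itself, extract $b_1,\ldots,b_d\in L$ with $b_iV=LV=\m_V$ and algebraically independent residues $\overline{b_i/b_1}$, and then upgrade that algebraic independence to $k$-linear independence of $1,\overline{b_2/b_1},\ldots,\overline{b_d/b_1}$ in $k_v$; reducing a putative relation $\sum c_ib_i\in\m^2$ by $b_1$ then forces all $\overline{c_i}=0$, so $b_1,\ldots,b_d$ is a regular system of parameters and $\m\subseteq L$. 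This bypasses both the reduction to infinite residue field and the minimal-reduction machinery, and makes the key mechanism explicit (transcendence of the residues is exactly what forces the leading forms to be independent in $\m/\m^2$). Your handling of the converse, via the standard description of $k_v$ as $k(\overline{x_2/x_1},\ldots,\overline{x_d/x_1})$ and the last sentence of Theorem~\ref{3.1}, is also fine; the paper leaves that direction implicit. Your proof is a clean and slightly more self-contained alternative resting on the same core result, Theorem~\ref{3.1}.
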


\begin{proof}
By passing from $R$ to $R(u)$, where $u$ is an indeterminate over $R$, we may assume
that the residue field of $R$ is infinite, cf. \cite[page~159]{SH}. 
Let $J = (a_1, \ldots, a_d)R$ be a minimal reduction of $L$.  Since 
$J$ and $L$ have the same integral closure, we have $\ord_RJ = \ord_RL = 1$.
Thus at least one of the $a_i \notin \m^2$. If all the $a_i \notin \m^2$, then
$J = \m$ and hence $L = \m$. On the other hand,  if some $a_i \in \m^2$, then by Theorem~\ref{3.1}, 
$V \notin \Rees L$. 
\end{proof}

\begin{proposition}\label{7.5}    Assume notation  as in Setting~\ref{7.1} with  
$\dim R = \dim R_n  \ge 3$  and $R_0/\m_0 = R_n/\m_n$.  Let $I = P_{R_0R_n}$.
The following statements are equivalent:
\begin{enumerate}
\item
 There is no change of direction  from $R_0$ to $R_n$.
\item 
$\Rees_{R_0}  (I)   =      \Rees_{R_n} ( \m_n)$.
\item 
$ \ord_{R_0} I =1$.
\item
$\mathcal{B}(I)=\{1, 1, \ldots, 1,1\}.$
\item
There exist generators $x, y,  z  \ldots, w$ for $\m = \m_0$ such that
$$
\m_j ~  =  ~\big( x,  ~\frac{y}{x^{j}},   ~ \frac{z}{x^{j}},  ~   \ldots, ~ \frac{w}{x^{j}} \big) R_j
$$
for each $j$ with $1  \le j \le n$.
\item
There  exist generators $x, y,  z  \ldots, w$ for $\m = \m_0$ such that
$$
P_{R_0R_n}=\big(x^{n+1}, ~  y,  ~ z, ~    \ldots,  ~ w \big)R_0.
$$ 
\end{enumerate}
\end{proposition}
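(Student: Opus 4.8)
The six conditions fall into a \emph{structural} group $(1),(5),(6)$, which describe the quadratic sequence and the ideal $P_{R_0R_n}$ coordinate-wise, and an \emph{order/Rees} group $(2),(3),(4)$; I would prove the cycle $(1)\Rightarrow(5)\Rightarrow(6)\Rightarrow(3)\Rightarrow(4)\Rightarrow(1)$ and treat $(2)$ by the side implications $(6)\Rightarrow(2)\Rightarrow(3)$. Several links are immediate: $(3)\Leftrightarrow(4)$ because Proposition~\ref{7.3} identifies the point basis of $I=P_{R_0R_n}$ with the decreasing sequence $\{r_0,\dots,r_{n-2},1,1\}$, so $r_0=\ord_{R_0}(I)=1$ forces all $r_i=1$ (the reverse being obvious); $(6)\Rightarrow(3)$ because in $(6)$ the elements $x,y,\dots,w$ form a minimal generating set of $\m_0$, whence $\ord_{R_0}\big((x^{n+1},y,\dots,w)\big)=1$; and $(5)\Rightarrow(1)$ because the fixed element $x$ then belongs to a minimal generating set of $\m_n$. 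Also $\Rees_{R_n}(\m_n)=\{\ord_{R_n}\}$ since $\gr_{\m_n}(R_n)$ is a domain, so $(2)$ is exactly the assertion $\Rees_{R_0}(P_{R_0R_n})=\{\ord_{R_n}\}$.

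For $(1)\Rightarrow(5)$, Remark~\ref{7.12} supplies $x\in\m_0$ with $\m_jR_{j+1}=xR_{j+1}$ for all $j$, so each $R_{j+1}$ is a localization of $R_j[\m_j/x]$; and since $R_0/\m_0=R_n/\m_n$ the whole tower of finite residue-field extensions collapses, so each such localization is at a maximal ideal of $R_j[\m_j/x]/xR_j[\m_j/x]$ with residue field $R_0/\m_0$. Treating $j=1,2,\dots,n$ in turn, and at stage $j$ replacing the non-$x$ generators $t_\ell$ of $\m_0$ by $t_\ell-c_\ell^{(j)}x^{j}$ (a change that, seen at any earlier stage $j'<j$, involves only the positive power $x^{j-j'}$ and hence does not spoil the equalities already arranged), one obtains generators $x,y,\dots,w$ of $\m_0$ with $\m_j=(x,\,y/x^{j},\dots,w/x^{j})R_j$ for $1\le j\le n$.

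The crux is $(5)\Rightarrow(6)$: assuming $(5)$ I would show that $K:=(x^{n+1},y,z,\dots,w)R_0$ satisfies the characterizing property of $P_{R_0R_n}$ in Definition~\ref{2.13}, whence $K=P_{R_0R_n}$ and $(6)$ holds. First, $K$ is complete: modulo the prime $P=(y,\dots,w)R_0$, with $R_0/P$ a DVR, $K$ reduces to $(\bar x^{\,n+1})$, which is integrally closed, and $P\subseteq K$ then forces $\overline K=K$. Second, a short induction on $j$ using $(5)$ gives $K^{R_j}=(x^{\,n+1-j},\,y/x^{j},\dots,w/x^{j})R_j$ for $0\le j\le n$; in particular $K^{R_n}=\m_n$. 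Third, the base points of $K$ are exactly $R_0,\dots,R_n$: beyond $R_n$ the transform of $\m_n$ is the whole ring, while a point leaving the chain at $R_j$ with $j<n$ lies above a first-neighborhood point of $R_j$ distinct from $R_{j+1}$, where the transform of $K^{R_j}$ is already the whole ring. This last statement is the one genuine lemma: \emph{in a $d$-dimensional regular local ring with $d\ge 3$, an ideal $(u_1,\dots,u_{d-1},u_d^{\,e})$ with $e\ge 2$ has a unique base point in its first neighborhood, namely $R[\m/u_d]_{(u_d,u_1/u_d,\dots,u_{d-1}/u_d)}$, with transform $(u_1/u_d,\dots,u_{d-1}/u_d,u_d^{\,e-1})$}. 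This is a localization computation: in $R[\m/g]$ one has $JR[\m/g]=g\cdot(u_1/g,\dots,u_{d-1}/g,\,g^{e-1}(u_d/g)^{e})$, and if the transform is nontrivial at a maximal ideal $Q$ then all $u_i/g\in Q$; writing $g=\sum a_iu_i$ and using $\sum a_i(u_i/g)=1$ forces $\bar a_d\ne 0$, whereupon $u_d/g$ is a unit at $Q$ and $R[\m/g]_Q$ is identified with the $u_d$-direction point. Granting these three points, each $K^{R_j}$ with $j\le n$ is a finitely supported complete $\m_j$-primary ideal of order $1$, hence special $*$-simple by Fact~\ref{order1}; so the complete transforms $\overline{K^A}$ are $*$-simple exactly for $A$ on the chain $R_0\prec\dots\prec R_n$, which is precisely the property defining $P_{R_0R_n}$.

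For $(4)\Rightarrow(1)$ I would run the same ideas downward (a peeling induction on $n$): from $(4)$, $\ord_{R_0}(P_{R_0R_n})=1$, so Fact~\ref{order1} writes $P_{R_0R_n}=(u_1,\dots,u_{d-1},u_d^{\,m})R_0$ with $m\ge 2$; the lemma forces its unique first-neighborhood base point---necessarily $R_1$, since $r_1=1$---to be the $u_d$-direction point, so $u_d$ lies in a minimal generating set of $\m_1$ and $P_{R_1R_n}=(u_1/u_d,\dots,u_{d-1}/u_d,u_d^{\,m-1})R_1$ again has order $1$ by Proposition~\ref{7.3}; iterating shows $u_d$ lies in a minimal generating set of every $\m_j$ (and $m=n+1$), which is $(1)$. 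Finally, for $(2)$: given $(6)$, Theorem~\ref{3.1} applied to the $d$-generated ideal $K=(x^{n+1},y,\dots,w)$ shows that any Rees valuation $V$ of $K$ satisfies $(n+1)\,v(x)=v(y)=\cdots=v(w)$, while Remark~\ref{7.1.5}(3a) (using $R_0/\m_0=R_n/\m_n$) confines $V$ to $\{\ord_{R_i}\}_{i=0}^{n-2}\cup\{\ord_{R_n}\}$, where $\ord_{R_i}(x)=1$ and $\ord_{R_i}(y)=i+1$; only $V=\ord_{R_n}$ is consistent, so $\Rees_{R_0}(K)=\{\ord_{R_n}\}=\Rees_{R_n}(\m_n)$. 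For the converse $(2)\Rightarrow(3)$ one argues the contrapositive: if $r_0\ge 2$, choose the largest $i_0$ with $r_{i_0}\ge 2$; one checks $P_{R_{i_0}R_n}$ is contracted from the $R_{i_0}[\m_{i_0}/x]$ that yields $R_{i_0+1}$ and, via Proposition~\ref{contracted}(2) (together with Lemma~\ref{7.41} to dispose of the order-one case and the Rees-valuation description in \cite{HK1}), that $\ord_{R_{i_0}}\in\Rees_{R_{i_0}}(P_{R_{i_0}R_n})=\Rees_{R_0}(P_{R_0R_n})$ by Proposition~\ref{reestrans}; since $\ord_{R_{i_0}}\ne\ord_{R_n}$ this breaks $(2)$. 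The main obstacle is the identification $(5)\Rightarrow(6)$, and inside it the uniqueness of the first-neighborhood base point of $(u_1,\dots,u_{d-1},u_d^{\,e})$; everything else is transform bookkeeping or a direct appeal to the cited results.
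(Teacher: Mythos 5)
Your overall architecture is different from the paper's.  The paper wires up the equivalences as $(1)\Leftrightarrow(2)$ (by citing \cite[Theorem~6.8]{HK1}), $(3)\Leftrightarrow(4)$ (via Proposition~\ref{7.3}), $(1)\Rightarrow(5)\Leftrightarrow(6)\Rightarrow(3)$, and $(3)\Rightarrow(2)$ (via Lemma~\ref{7.41} together with \cite[Corollary~4.7]{HK1}); the loop is closed through the two \cite{HK1} citations. You instead run the self-contained cycle $(1)\Rightarrow(5)\Rightarrow(6)\Rightarrow(3)\Rightarrow(4)\Rightarrow(1)$ and attach $(2)$ by $(6)\Rightarrow(2)\Rightarrow(3)$.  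Two of your departures are genuine improvements in self-containment: the direct $(4)\Rightarrow(1)$, which iterates the normal form of Fact~\ref{order1} and your local lemma on ideals $(u_1,\dots,u_{d-1},u_d^e)$ to exhibit an element of $\m_0$ surviving into every $\m_j$; and the direct $(6)\Rightarrow(2)$, where Theorem~\ref{3.1} applied to the explicit $d$ generators $x^{n+1},y,\dots,w$ rules out every candidate $\ord_{R_i}$ with $i\le n-2$ because $\ord_{R_i}(x^{n+1})=n+1\neq i+1=\ord_{R_i}(y)$.  Both replace the two external \cite{HK1} citations the paper leans on, at the cost of extra length.  Your $(5)\Rightarrow(6)$ is also more elaborate than the paper's ``straightforward computation'': where the paper takes the identification $(x^{n+1},y,\dots,w)=P_{R_0R_n}$ as a routine transform calculation, you verify the defining property of Definition~\ref{2.13} head-on, including a correct localization lemma identifying the unique first-neighborhood base point of an order-one ideal $(u_1,\dots,u_{d-1},u_d^e)$; this is sound (given $d\ge 3$ as hypothesized) but heavier.

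The one place where you should not declare victory is the return implication $(2)\Rightarrow(3)$.  As written you choose the largest $i_0$ with $r_{i_0}\ge 2$ and invoke Proposition~\ref{contracted}(2) on $J:=P_{R_{i_0}R_n}$, but for that you need two things that are asserted rather than established: that $J$ is contracted from the particular affine chart $S=R_{i_0}[\m_{i_0}/x]$ containing $R_{i_0+1}$, and that $J$ contains an element divisible by $x$ of order $\ord_{R_{i_0}}(J)=r_{i_0}$.  The first can be repaired by the argument of Remark~\ref{5.11r} (a complete $\m_{i_0}$-primary ideal whose unique first-neighborhood base point lies in $\Proj S$ is contracted from $S$, hence from $\Proj R_{i_0}[\m_{i_0}t]$), but the second is the real work and does not follow from what you've quoted: nothing you have said rules out that every minimal-order element of $J$ lies in $(y,\dots,w)R_{i_0}$ with no $x$-factor.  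Since you already have the cycle $(1)\Rightarrow(5)\Rightarrow(6)\Rightarrow(3)\Rightarrow(4)\Rightarrow(1)$ and $(6)\Rightarrow(2)$ in hand, it would be cleaner to discharge $(2)$ by proving the contrapositive of $(2)\Rightarrow(1)$ directly, or, if you want to keep your outline, to supply the missing element; as it stands, this is the one step that is a plan rather than a proof.
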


\begin{proof}
$(1)\Leftrightarrow (2)$: This follows from \cite[Theorem~6.8]{HK1}.\\
$(3)\Leftrightarrow (4)$: This follows from  Proposition~\ref{7.3}.\\
$(1)\Rightarrow (5)$: Since there is no change of direction in the local quadratic sequence 
from $R_0$ to $R_n$, by Remark~\ref{7.12},  we may choose an element $x \in \m_0$ such that $x$ is  
part of a minimal generating set for $\m_j$ for each 
$j$ with $1\leq j \leq n$.  

We prove item~5 holds by induction on $n$. Let $x, y', z', \ldots, w'$ be 
a regular system of parameters for $\m_0$.   
 Since $R_1$ is a localization of $R[\m/x]$ at a maximal ideal containing $x$ and 
$R/\m = R_1/\m_1$, there exist elements $a_1, b_1, \ldots, c_1 \in R$ 
such that 
$$ 
\m_1 ~=~ (x, ~ \frac{y'}{x}- a_1,   ~\frac{z'}{x} - b_1, ~\ldots, ~ \frac{w'}{x} - c_1)R_1.
$$
We take $y = y'-a_1x$, $z = z' - b_1x, \ldots, w = w' - c_1x$.  Then $\m = (x,y,z, \ldots, w)R$
and $\m_1 = (x, \frac{y}{x},  \frac{z}{x},  \ldots, \frac{w}{x})R_1$.  
This proves the case where $n=1$.  
Assume that item~5 holds for $n-1$. Then there exist 
elements $y', z', \ldots, w'$ such that  
$$
\m ~ =   ~ (x, y', z', \ldots, w')R   \qquad \text{ and } \qquad   \m_{n-1} ~  =  ~\big( x,  ~\frac{y'}{x^{n-1}},   ~ \frac{z'}{x^{n-1}},  ~  
 \ldots, ~ \frac{w'}{x^{n-1}} \big) R_{n-1}
$$
Since $R_n$ is a localization of $R_{n-1}[  \frac{\m_{n-1}}{x}]$ at a maximal ideal containing $x$ and
$R/m = R_n/\m_n$, there exist elements $a_n, b_n, \ldots, c_n \in R$ 
such that 
$$ 
\m_n ~=~ (x, ~ \frac{y'}{x^{n}}- a_n,   ~\frac{z'}{x^{n}} - b_n, ~\ldots, ~ \frac{w'}{x^{n}} - c_n)R_n.
$$
Then taking $y = y'-a_nx^n$, $z = z' - b_nx^n, \ldots, w = w' - c_nx^n$  completes an
inductive proof that item~1 implies item~5.

\noindent
$(5) \Leftrightarrow  (6)$:  This is a straightforward computation.\\
$(6) \Rightarrow (3)$: This is clear.\\
$(3)\Rightarrow (2)$:   
Let $V_j$ be the order valuation ring of $R_j$. Since $\ord_{R_j}(I^{R_j})=1$ 
for each  $j$ with $0 \leq j \leq n$, 
we have  $V_j \notin \Rees_{R_j}(I^{R_j})$, by Lemma~\ref{7.41}, 
and hence by \cite[Corollary~4.7]{HK1}, 
$\Rees_{R_{j+1}}(I^{R_{j+1}})=\Rees_{R_j}(I^{R_j})$. Thus we have 
$$
\Rees_{R_0}(I)=\Rees_{R_{1}}(I^{R_{1}})=\cdots=\Rees_{R_n}(I^{R_n})=\Rees_{R_n}(\m_n).
$$
\end{proof}

\begin{remark} \label{7.51} In the case where $R$ is a 2-dimensional regular local ring,
items 1, 3, 4, and 5 of Proposition~\ref{7.5} are equivalent and imply item~2. However, item~2
does not imply item~1.
\end{remark}

\section{$*$-simple complete monomial ideals } \label{c5}

In this  section we consider monomial ideals.  

\begin{definition}\label{5.1} 
Let $(R, \m)$ be an $d$-dimensional equicharacteristic regular local ring  and fix 
$d$ elements $x,y,\ldots, z$ such that 
$\m:=(x, y, \ldots, z)R$.   An ideal $I$ of $R$  
is said to be a {\bf monomial ideal}  if $I$ is generated by elements 
$x^ay^b \cdots z^c$ with $a,b,\ldots, c \in \N_0$.     Let 
$$
^xS ~= ~ R \big[ \frac{\m}{x} \big] = R \big[ \frac{y}{x}, ~\ldots, ~\frac{z}{x} \big] \qquad x_1 ~:= ~x,~\quad y_1:~= ~\frac{y}{x},~\quad \ldots,~\quad~z_1~:=\ ~\frac{z}{x}.
$$
If $I$ is a monomial ideal in $R$,   the transform of $I$ in $^xS$ is  generated by elements of the
form $x_1^ay_1^b \cdots z_1^c $ with $a,b,\ldots, c \in \N_0$.  
This motivates us to define an ideal $J$ of $^xS$ to be
a {\bf monomial ideal}  if $J$ is generated by monomials in 
$x_1, y_1, \ldots, z_1$.   We consider  
monomial quadratic transformations of $R$ defined as follows:   the ring 
$^xR=R\big[\frac{\m}{x}\big]_{(x,~ \frac{y}{x},~\ldots,~\frac{z}{x})}$ is a 
{\bf local monomial quadratic transformation}  of $R$ {\bf in the  $x$-direction}.
An ideal $J$ of $^xR$ is said to be a {\bf monomial ideal} if $J$ is generated by
monomials in $x_1, y_1, \ldots, z_1$. 

In a similar manner, we define   $^yR$, $\ldots$, $^zR$ to be the  
{\bf local  monomial quadratic transformations}  
of $R$ in the {\bf  $y$-direction}, $\ldots$, {\bf  $z$-direction}, respectively, if  
$$ 
^yR~=  ~R\big[\frac{\m}{y}\big]_{(\frac{x}{y},~y,~\ldots,~\frac{z}{y})}, \quad~
\ldots, \quad~
^zR=R\big[\frac{\m}{z}\big]_{(\frac{x}{z},~ \frac{y}{z},~\ldots,~z)}.
$$
We define an ideal of $^yR$, $\ldots$, $^zR$ to be a {\bf monomial ideal} if it is 
generated by monomials in the respective rings.  We refer to the elements in the  fixed set of 
minimal generators of the regular local ring as
{\bf variables}.

For a monomial ideal $I$ of one of these rings,  let  $\Delta (I)$ 
denote the set of monomial minimal generators of $I$.
\end{definition}

Notice that there are precisely $d$ distinct local monomial quadratic transformations  of $R$.    
If $I$ is a finitely supported complete monomial $\m$-primary 
ideal of $R$,  then the base points of $I$ in the first neighborhood of $R$  are 
a subset of $\{^xR,~ ^yR, ~\ldots, ~^zR \}$.    
Moreover, by repeating  the above process of monomial quadratic transformations,  we obtain 
more   information about the base points and point basis of  a finitely 
supported complete $\m$-primary monomial ideal of $R$.   There are, for example, 
at most $d^2$ base points of a monomial ideal
in the second neighborhood of  $R$.

\begin{setting}\label{Setting4}
Let $(R, \m)$ be a $d$-dimensional equicharacteristic regular local ring with $d \ge 3$ and fix a regular system of parameters $x, y, \ldots, z$ for $R$.
Let $R_1 := {^xR}$ be the local monomial quadratic transform of $R$ in the $x$-direction, where $\m_1 := (x_1, y_1, \ldots, z_1) R_1$ is the maximal ideal of $R_1$ as in Definition~\ref{5.1}.
\end{setting}
  
 We observe in Remark~\ref{monoprop},  that many   of the properties of monomial ideals  of a localized polynomial ring    
 over a field also hold for the the  monomial ideals of Setting~\ref{Setting4}.

\begin{remark}   \label{monoprop}  Let $I$ and $J$  be  monomial $\m$-primary  ideals  of the ring $R$ 
of Setting~\ref{Setting4}. 
\begin{enumerate}
\item  A monomial  of $R$  is in   $I$ if and only if it is a multiple of 
a monomial in $\Delta(I)$.
\item  Let $K$ denote any one of the ideals $I + J$, $I J$, $(I : J)$, and $I \cap J$.
Then $K$ is also a monomial ideal.
\item  If $I$ is complete, then $(I:J)$ is complete.
\item\label{monoprop4} The integral closure $\overline I$ of $I$ is again a monomial ideal.
\item\label{monoprop5}
If a power of one of $x, y, \ldots,$ or $z$ is in the integral closure of $I$ then it is also in $I$.
\end{enumerate}
\end{remark}

\begin{proof}  Item~1 and item~2 are  Lemmas~6, 7  and  Theorem~6  of  \cite{T},   and item~3 is Remark~1.3.2 of \cite{SH}.
The proof of item~4 is given in \cite{KS}.

 For the proof of item~5, we use 
that the quotient ring obtained by 
 going modulo the ideal generated by the other $d-1$ variables is a PID. 
Assume for example that  $x^n \in \overline I$. Then $I + (y,\ldots, z)R$ is an integrally 
closed monomial ideal since its image $ \frac{I + (y,\ldots,z)R}{(y,\ldots,z)R}$ is an
integrally closed ideal. Hence $\overline I \subseteq I + (y,\ldots,z)R$.  Moreover, $x^n   \in  I + (y,\ldots,z)R$ 
and $I + (y,\ldots,z)R$ a monomial ideal implies $x^n \in I$.
\end{proof}

\begin{proposition}\label{contract7}
Assume the notation of Setting~\ref{Setting4}, 
and let $I_1$ be a monomial ideal of $R_1$ that contains a power of $x = x_1$.
Then $I_1 \cap R$ is a monomial ideal of $R$.
\end{proposition}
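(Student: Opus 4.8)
The plan is to reduce, via formal completion, to an honest power series ring, where a torus action detects monomial ideals. Write $x_1 = x,\ y_1 = y/x,\ \ldots,\ z_1 = z/x$ for the regular system of parameters of $R_1 = {}^{x}R$ as in Definition~\ref{5.1}, and put $k = R/\m = R_1/\m_1$. If $I_1 = R_1$ the assertion is trivial, so assume $I_1 \subsetneq R_1$ and fix $N$ with $x^N \in I_1$. Since $\m R_1 = xR_1$ we have $\m^N R_1 = x^N R_1 \subseteq I_1$, so $\m^N \subseteq I_1 \cap R$; and since $R_1 = R[\m/x]_{\mathfrak q}$ with $xR[\m/x] \subseteq \mathfrak q$, the ring $R_1$ is contained in the order valuation ring $V$ of $R$ (which is $R[\m/x]$ localized at the height-one prime $xR[\m/x]$), so any $f \in x^N R_1 \cap R$ has $\ord_R(f) = \ord_R(f/x^N) + N \ge N$; hence $x^N R_1 \cap R = \m^N$. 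In particular $I_1 \cap R$ is $\m$-primary, $R/\m^N$ embeds in $R_1/x^N R_1$, and $I_1 \cap R$ is the full preimage in $R$ of the monomial ideal $I_1/x^N R_1$. Since $\widehat R$ is faithfully flat over $R$ and every monomial of $\widehat R$ already lies in $R$, an ideal $K \subseteq R$ is monomial if and only if $K\widehat R$ is a monomial ideal of $\widehat R$; so it is enough to prove that $(I_1 \cap R)\widehat R$ is a monomial ideal of $\widehat R$.

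Next I would pass to the completions. By Cohen's structure theorem $\widehat R \cong k[[x,y,\ldots,z]]$ and $\widehat{R_1} \cong k[[x_1,y_1,\ldots,z_1]]$, and the local homomorphism $R \hookrightarrow R_1$ induces $\widehat R \to \widehat{R_1}$ with $x \mapsto x_1,\ y \mapsto x_1 y_1,\ \ldots,\ z \mapsto x_1 z_1$; a short leading form computation (if $f = f_r + f_{r+1} + \cdots$ maps to $0$ then $x_1^{-r}$ times the image has $x_1$-free part $f_r(1,y_1,\ldots,z_1)$, forcing $f_r = 0$) shows this map is injective, so I regard $\widehat R$ as the closed $k$-subalgebra of $\widehat{R_1}$ topologically generated by $x_1, x_1 y_1, \ldots, x_1 z_1$. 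The ideal $I_1\widehat{R_1}$ is a monomial ideal of $\widehat{R_1}$, generated by the monomial generators of $I_1$ together with $x_1^{N}$. One checks $(I_1 \cap R)\widehat R = I_1\widehat{R_1} \cap \widehat R$: the inclusion ``$\subseteq$'' is clear, and for ``$\supseteq$'' note that both ideals contain $\widehat\m^N = \m^N\widehat R$ (using $x_1^N \in I_1$ and $\widehat\m \subseteq x_1\widehat{R_1}$), that $\widehat R/\widehat\m^N = R/\m^N$, and that every element of $R/\m^N$ lifts to $R$, so every element of $I_1\widehat{R_1}\cap\widehat R$ is congruent modulo $\widehat\m^N$ to an element of $(I_1\widehat{R_1}\cap\widehat R)\cap R = I_1 \cap R$. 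It therefore remains to show that $I_1\widehat{R_1}\cap\widehat R$ is a monomial ideal of $\widehat R$.

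For this I would bring in the torus. After the harmless replacement of $R$ by $R(u)$ — exactly as in the proof of Lemma~\ref{7.41} — I may assume $k$ is infinite. For $t = (t_1,\ldots,t_d) \in (k^{\ast})^d$ let $\sigma_t$ be the continuous $k$-algebra automorphism of $\widehat{R_1} = k[[x_1,\ldots,z_1]]$ multiplying $x_1,\ldots,z_1$ by $t_1,\ldots,t_d$. Then $\sigma_t$ fixes the monomial ideal $I_1\widehat{R_1}$, and it sends each topological generator $x_1, x_1 y_1, \ldots, x_1 z_1$ of $\widehat R$ to a scalar multiple of itself, so $\sigma_t(\widehat R) = \widehat R$; hence $I_1\widehat{R_1}\cap\widehat R$ is $\sigma_t$-stable for every $t$. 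In the coordinates $x,y,\ldots,z$ of $\widehat R$ the automorphism $\sigma_t$ acts by $x \mapsto t_1 x$, $y \mapsto t_1 t_2\, y$, $\ldots$, $z \mapsto t_1 t_d\, z$, so the monomial $x^a y^b \cdots z^c$ is an eigenvector of weight $t_1^{a+b+\cdots+c}\,t_2^{\,b}\cdots t_d^{\,c}$; as $(a,b,\ldots,c)\mapsto(a+b+\cdots+c,\,b,\ldots,c)$ is injective, distinct monomials of $\widehat R$ have distinct weights. The standard argument — decompose the finite dimensional quotients $\widehat R/\widehat\m^M$ into weight spaces, use a Vandermonde determinant and the infinitude of $k$ to project onto weight spaces, and pass to the limit since ideals of the complete local ring $\widehat R$ are closed — then shows that a $(k^{\ast})^d$-stable ideal of $\widehat R$ is generated by the monomials it contains. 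Thus $I_1\widehat{R_1}\cap\widehat R$ is monomial, and unwinding the reductions shows that $I_1 \cap R$ is a monomial ideal of $R$.

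The step I expect to be the main obstacle is the completion bookkeeping of the second paragraph — the injectivity of $\widehat R \to \widehat{R_1}$ and the identity $(I_1\cap R)\widehat R = I_1\widehat{R_1}\cap\widehat R$ — which is exactly what makes the torus available: because $R$ and $R_1$ are abstract regular local rings rather than localized polynomial rings, the $(k^{\ast})^d$-action exists only after passing to the formal completions, and one must verify that contracting along $R \hookrightarrow R_1$ is compatible with contracting along $\widehat R \hookrightarrow \widehat{R_1}$ (this is where the hypothesis $x^N \in I_1$ enters, through $\m^N \subseteq I_1\cap R$). Everything else is either formal or a routine verification using Remark~\ref{monoprop} and the facts recalled above.
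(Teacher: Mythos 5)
Your proof is correct, but it takes a genuinely different route from the paper's. Both arguments begin the same way: complete, identify $\widehat R = k[[x,y,\ldots,z]]$ and $\widehat{R_1}=k[[x_1,y_1,\ldots,z_1]]$ with $x\mapsto x_1$, $y\mapsto x_1y_1$, $\ldots$, $z\mapsto x_1z_1$, and reduce the question to whether $I_1\widehat{R_1}\cap\widehat R$ is a monomial ideal. From there the paper transfers the problem once more, to the coefficient-field polynomial subrings $R'\subset\widehat R$ and $R'_1\subset\widehat{R_1}$; on the underlying polynomial rings $A=k[x,y,\ldots,z]$ and $A_1=k[x,\frac{y}{x},\ldots,\frac{z}{x}]$ it equips everything with a $\Z^d$-grading for which $A$ is a graded subring of $A_1$ and the graded ideals are exactly the monomial ones, so $\Delta(I_1)A_1\cap A$ is automatically monomial; the $\m$-primary ideal correspondence between $R'$ and $\widehat R$ (here is where $x^N\in I_1$ enters for the paper) then lifts this back up and contracts to $R$. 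That argument works over any residue field. You instead stay on the power series side and replace the grading by its group-theoretic twin, the $(k^*)^d$-action scaling $x_1,\ldots,z_1$: it fixes $I_1\widehat{R_1}$ and restricts to an action on $\widehat R$, and then a weight-space/Vandermonde argument shows that a torus-stable ideal of $\widehat R$ is monomial. The price is the detour through $R(u)$ so that distinct monomials of $\widehat R$ acquire distinct characters on the torus's $k$-points; the benefit is that you never need the polynomial subrings $R'$, $R'_1$ or the primary-ideal correspondence, and your direct congruence-mod-$\widehat\m^N$ verification of $(I_1\cap R)\widehat R = I_1\widehat{R_1}\cap\widehat R$ is cleaner than the paper's. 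Morally the two proofs are dual (grading versus torus action), but the operative lemma is different enough that this should count as a distinct approach. The one step I would ask you to make explicit is the claim that passage to $R(u)$ is harmless: you need $(I_1\cap R)R(u)=I_1R_1(u)\cap R(u)$, which follows from flatness of $R\to R(u)$ together with the observation that $R_1/I_1$ is $\m^N$-torsion (so $(R_1/I_1)\otimes_RR(u)$ and $(R_1/I_1)\otimes_{R_1}R_1(u)$ agree); that is routine but not entirely free, and you should also note that once $I_1R_1(u)\cap R(u)$ is generated by monomials of $R$, faithful flatness of $R\to R(u)$ returns a monomial generating set for $I_1\cap R$.
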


\begin{proof} 
Since $R$ is equicharacteristic, the completion $\widehat R$ of $R$   has a coefficient field $k$ \cite[(31.1)]{N}. 
Since $R$ is regular, $\widehat R$ is regular and is the $d$-dimensional formal power series ring $k[[x, y, \ldots, z]]$.
Since $R/\m = R_1/\m_1$, the completion $\widehat {R_1}$ of $R_1$ is the formal power series ring
$k[[x, y_1, \ldots, z_1]]$,  and the local   inclusion map  $R  \hookrightarrow R_1$    extends to a
homomorphism on completions $\phi: \widehat{R} \rightarrow \widehat{R_1}$.
Commutativity of the diagram
$$\begin{tikzpicture}
\node[name=RHat] at (-1, 1) {$\widehat{R}$};
\node[name=k1] at (-1, -1) {$R / \m$};
\node[name=R1Hat] at (1, 1) {$\widehat{R'}$};
\node[name=k2] at (1, -1) {$R_1 / \m_1$};

\draw[->]
	(RHat) edge (k1)
	(R1Hat) edge (k2)
	(RHat) edge node[above] {$\phi$} (R1Hat)
	(k1) edge node[above] {$\simeq$} (k2);
\end{tikzpicture}$$
implies that $\phi$ is a $k$-algebra homomorphism.

$$ 
\phi:  \widehat R \longrightarrow  \widehat{R_1} \quad \text{ where } \quad x ~\mapsto ~ x ,\quad y ~ \mapsto  ~xy_1, \quad \ldots \quad , z ~\mapsto ~ xz_1.
$$
Uniqueness of expression for $f \in \widehat{R}$ as a power series in $k [[x, y, \ldots, z]]$ implies that $\phi$ is injective.
The subring   $R' := k [x, y, \ldots, z]_{(x, y, \ldots, z)}$  of $\widehat R$ is  a localized polynomial ring in $d$-variables over $k$, 
and we have $\widehat{R'} = k[[x,y, \ldots, z]] = \widehat R$.  
Similarly,  the subring  $R'_1 : = k [x, y_1, \ldots, z_1]_{(x, y_1, \ldots, z_1)}$ of $\widehat{R_1}$ is a 
localized polynomial ring in $d$ variables over the field $k$  , and we have  $\widehat {R_1} = \widehat {R'_1}$.

Note that the set $\Delta (I_1)$ of minimal monomial generators of $I_1$   in $R_1$ is contained in  the rings $\widehat{R_1}$ and $R'_1$.
Consider the diagram,

$$\begin{tikzpicture}
\node[name=RHat] at (0, 0) {$\widehat{R}$};
\node[name=R] at (-1.5, -1) {$R$};
\node[name=RPrime] at (1.5, -1) {$R'$};

\node[name=R1Hat] at (0, 2) {$\widehat{R_1}$};
\node[name=R1] at (-1.5, 1) {$R_1$};
\node[name=R1Prime] at (1.5, 1) {$R'_1$};

\draw[->]
	(R) edge (RHat)
	(R) edge (R1)
	(R1) edge (R1Hat)

	(RPrime) edge (RHat)
	(RPrime) edge (R1Prime)
	(R1Prime) edge (R1Hat)

	(RHat) edge node[right] {$\phi$} (R1Hat);
\end{tikzpicture}$$

Since $\widehat{R_1}$ is faithfully flat over $R_1$,   we have 
$$
I_1 ~=  ~\Delta(I_1)R_1  ~= ~ \Delta(I_1)\widehat{R_1} \cap R_1.
$$
Since  $\Delta (I_1) \subset R'_1$ and  $\widehat{R_1}$ is  faithfully flat over $R'_1$, we have 
$I_1 \widehat{R_1} \cap R'_1 = \Delta (I_1) R'_1$.

Define a $\Z^d$-grading on the polynomial ring $A = k[x,y, \ldots, z]$ and its localization $A[\frac{1}{x}]$ by 
giving $x$ weight $(1,0, \ldots, 0)$, $y$ weight $(0,1,0, \ldots, 0)$, $\ldots$, and $z$ weight $(0,0,\ldots, 1)$.
The polynomial ring $A_1 = k[x, \frac{y}{x}, \ldots, \frac{z}{x}]$ is a graded subring of $A[\frac{1}{x}]$,    
and $\Delta(I_1)$ is  a subset of $A_1$. 
Since $A$ is a graded subring of $A_1$ and the graded ideals of $A$ with respect to this multi-grading are precisely the monomial ideals,
we have $J : = \Delta(I_1)A_1 \cap A$ is a monomial ideal of $A$.   Since $\Delta(I_1)$ contains a power of $x$, the ideal $J$ is 
primary for the maximal ideal $(x,y, \ldots, z)A$.  Hence $JR'   = \Delta(J)R'$ is  a 
monomial ideal of $R'$ that is  primary for the
maximal ideal $\m'$ of $R'$, and we have $\Delta(J)R' = \Delta(I_1)R_1' \cap R'$.

Since the $\m'$-primary ideals of $R'$ are in one-to-one inclusion preserving 
correspondence with the $\widehat{\m}$-primary ideals of $\widehat R$,  we have 
$I_1 \widehat{R_1} \cap \widehat{R} = \Delta(J) \widehat{R}$.
It follows that $\Delta(J )\widehat{R} \cap R = I_1 \cap R$.
Since $\widehat{R}$ is faithfully flat over $R$ and $\Delta(J) \subset R$,  it follows that $\Delta(J)R =  I_1 \cap R$ 
is a monomial ideal of $R$.
\end{proof}

Let $R_1$ denote the ring $^xR$ of Definition~\ref{5.1}, 
let $I_1$ be an $\m_1$-primary monomial ideal in $R_1$,
and let $\nu_x$ denote the $x$-adic valuation of $R$ on its field of fractions $\mathcal Q(R)$.
Thus $\nu_x (x) = 1$, $\nu_x (y_1) = \cdots = \nu_x(z_1) =  -1$.
   For each  monomial 
$\alpha ~:=  ~x_1^{\alpha_x}y_1^{\alpha_y} \cdots z_1^{\alpha_z} ~ \in  ~ \Delta(I_1)$,   we have  
$$
 \nu_x (\alpha) ~ =  \alpha_x~-~(\alpha_y~+~\cdots~+~\alpha_z).
$$
Define   the integer $\delta(I_1)$ as follows:
$$
 \delta(I_1)~~:= ~~ \max~\Big\{ - \nu_x (\alpha)~\vert~ \alpha ~\in ~ \Delta(I_1) \Big\}.
$$
Thus $x^{\delta (I_1)} \alpha \in R$ for each $\alpha \in \Delta (I_1)$, and $\delta (I_1)$ is the smallest integer with this property.

In analogy with work of  Gately   \cite[page~2844]{G2}      in the case where $R$ is  a localized polynomial ring  in three variables 
 over a field,   we define   the complete inverse transform $\CIT(I_1)$  of $I_1$ to be  
 the integral closure of the ideal  $J$, where   
\begin{equation} \label{5.11}
J  ~:=   ~\Big( \big\{ x^{\delta (I_1)} \alpha = x^{\delta(I_1) + \nu_x (\alpha)}y^{\alpha_y} \cdots z^{\alpha_z}~\vert~
\alpha \in \Delta(I_1)  \big\},
\quad y^{\delta(I_1)},
\quad \ldots,
\quad  z^{\delta(I_1)} \Big )R.
\end{equation}

We observe in Lemma~\ref{5.14}  that $\CIT (I_1)$ has the following properties:  

\begin{lemma}\label{5.14}
Assume the notation of Setting~\ref{Setting4}, and let 
$I_1$ be an  $\m_1$-primary  complete  monomial ideal. There exist  integers 
$n_x, n_y, \ldots,  n_z$ such that $x_1^{n_x}, y_1^{n_y}, \ldots, z_1^{n_z} \in \Delta (I_1)$.
Let $I := \CIT (I_1)$.   Then:
\begin{enumerate}
\item
$\delta (I_1) = \max (n_y, \ldots, n_z)$.
\item
$\ord_{R} (I) = \delta (I_1)$.
\item
$x^{n_x + \delta (I_1)} \in \Delta (I)$.
\end{enumerate}
\end{lemma}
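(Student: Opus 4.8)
The plan is to dispose of the existence of the exponents first, then prove item~(1), which carries all the content, and finally items~(2) and~(3), which are bookkeeping once $\delta(I_1)$ is understood. Since $I_1$ is $\m_1$-primary it contains a power of each of $x_1, y_1, \dots, z_1$, and since $I_1$ is a monomial ideal, the minimal generator dividing such a power must itself be a pure power (as in item~(1) of Remark~\ref{monoprop}); call these pure-power generators $x_1^{n_x}, y_1^{n_y}, \dots, z_1^{n_z} \in \Delta(I_1)$. Set $N := \max(n_y, \dots, n_z)$. The inequality $\delta(I_1) \ge N$ is immediate, since $-\nu_x(y_1^{n_y}) = n_y, \dots, -\nu_x(z_1^{n_z}) = n_z$. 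The reverse inequality I would derive from the \emph{Claim} that every monomial in $y_1, \dots, z_1$ alone, of total degree at least $N$, lies in $I_1$. Granting the Claim, take $\alpha = x_1^{\alpha_x}y_1^{\alpha_y}\cdots z_1^{\alpha_z} \in \Delta(I_1)$: if $\alpha_x \ge 1$ and $\alpha_y + \cdots + \alpha_z \ge N$, then $y_1^{\alpha_y}\cdots z_1^{\alpha_z} \in I_1$ properly divides $\alpha$, contradicting minimality, so $\alpha_x \ge 1$ forces $\alpha_y + \cdots + \alpha_z \le N-1$ and $-\nu_x(\alpha) \le N-2$; if $\alpha_x = 0$ and $\alpha_y + \cdots + \alpha_z \ge N+1$, dividing $\alpha$ by one of the variables occurring in it produces a monomial in $y_1, \dots, z_1$ of total degree $\ge N$ that lies in $I_1$ and properly divides $\alpha$, again a contradiction. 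In all cases $-\nu_x(\alpha) \le N$, so $\delta(I_1) = N$, which is item~(1).

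To prove the Claim I would use that $m^k \in I_1^k$ for some $k$ implies $m$ is integral over $I_1$, hence $m \in \overline{I_1} = I_1$. Index the variables other than $x$ as $t_2, \dots, t_d$ with associated exponents $n_2, \dots, n_d \le N$, and write the given monomial as $\mu = \prod_j t_j^{b_j}$ with $\sum_j b_j \ge N$. An elementary computation shows that a monomial $\prod_j t_j^{e_j}$ lies in $(t_2^{n_2}, \dots, t_d^{n_d})^k \subseteq I_1^k$ exactly when $\sum_j \lfloor e_j / n_j \rfloor \ge k$, so it suffices to produce $k$ with $\sum_j \lfloor k b_j / n_j \rfloor \ge k$. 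Since $\sum_j b_j / n_j \ge (\sum_j b_j)/N \ge 1$, this holds for all sufficiently large $k$ when $\sum_j b_j / n_j > 1$, and for $k = \prod_j n_j$ (which makes every $k b_j / n_j$ an integer) when $\sum_j b_j / n_j = 1$. Either way $\mu^k \in I_1^k$, so $\mu \in I_1$. This Claim, and hence the equality $\delta(I_1) = \max(n_y, \dots, n_z)$, is the part I expect to be the main obstacle; the rest is routine.

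For item~(2): since $\ord_R$ is a valuation it takes the same value on an ideal and its integral closure, so $\ord_R(\CIT(I_1)) = \ord_R(\overline{J}) = \ord_R(J)$ for the monomial ideal $J$ of Equation~\eqref{5.11}. That ideal contains $y^{\delta(I_1)}$, of total degree $\delta(I_1)$, while each of its listed generators $x^{\delta(I_1)}\alpha$ has total degree $\delta(I_1) + \alpha_x \ge \delta(I_1)$ and $y^{\delta(I_1)}, \dots, z^{\delta(I_1)}$ have total degree $\delta(I_1)$; since the order of a monomial ideal is the least total degree of a monomial generator, $\ord_R(J) = \delta(I_1)$.

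For item~(3): the generator of $J$ coming from $\alpha = x_1^{n_x}$ is $x^{\delta(I_1) + \nu_x(x_1^{n_x})} = x^{n_x + \delta(I_1)}$, so $x^{n_x+\delta(I_1)} \in J \subseteq \overline{J} = I$, which is a monomial ideal by item~(4) of Remark~\ref{monoprop}. It remains to check $x^{n_x+\delta(I_1)-1} \notin I$. I would pass to the quotient $\overline{R} := R/(y, \dots, z)R$, a DVR with uniformizer $\overline{x}$: every listed generator of $J$ except $x^{n_x+\delta(I_1)}$ involves some $y, \dots, z$ and so maps to $0$, whence $J\overline{R} = (\overline{x}^{\,n_x+\delta(I_1)})$. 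Since integral closure is compatible with this surjection and every ideal of a DVR is integrally closed, $I\overline{R} = \overline{J}\,\overline{R} \subseteq (\overline{x}^{\,n_x+\delta(I_1)})$, whereas $x^{n_x+\delta(I_1)-1}$ maps to $\overline{x}^{\,n_x+\delta(I_1)-1} \notin (\overline{x}^{\,n_x+\delta(I_1)})$. Hence $x^{n_x+\delta(I_1)-1} \notin I$, and therefore $x^{n_x+\delta(I_1)} \in \Delta(I)$.
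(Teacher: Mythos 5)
Your proof is correct and follows essentially the same strategy as the paper's for all three items; the differences are only in how much is unpacked. For item~(1), where the paper establishes $(y_1,\ldots,z_1)^r\subseteq I_1$ by quoting that $(y_1,\ldots,z_1)$ is a normal ideal so that $(y_1,\ldots,z_1)^r=\overline{(y_1^r,\ldots,z_1^r)}\subseteq\overline{(y_1^{n_y},\ldots,z_1^{n_z})}\subseteq I_1$, you prove the same containment by exhibiting, for each monomial in $y_1,\ldots,z_1$ of degree at least $N$, an explicit equation of integral dependence over $(y_1^{n_y},\ldots,z_1^{n_z})$; the floor-sum criterion and the choice $k=\prod n_j$ in the borderline case are a valid self-contained replacement for the normality citation. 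Item~(2) is identical. For item~(3), where the paper invokes item~(5) of Remark~\ref{monoprop} (pure powers of a variable lying in $\overline J$ already lie in $J$), your passage to $\overline R=R/(y,\ldots,z)R$ and use of the DVR is exactly the argument hidden inside that cited remark, so again you are unpacking rather than deviating.
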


\begin{proof}~
    To prove item~1, let   $r := \max (n_y, \ldots, n_z)$.   By definition, we have  $\delta (I_1) \ge r$.
Since $I_1$ is integrally closed,  we have 
$$
I_1  ~\supset ~ \overline{(y_1^{n_y}, ~ \ldots, ~ z_1^{n_z})} ~ \supset  ~\overline{(y_1^{r}, ~ \ldots, ~ z_1^{r})}~ =  ~(y_1, ~ \ldots, ~ z_1)^{r}.
$$
The last equality follows because $(y_1, \ldots, z_1)$ is a normal ideal of $R_1$.
Thus whenever $\alpha_y + \ldots + \alpha_z = r$,  we have $y_1^{\alpha_y} \cdots z_1^{\alpha_z} \in I_1$.
Hence for  every element $x_1^{\alpha_x} y_1^{\alpha_y} \cdots z_1^{\alpha_z} \in \Delta (I_1)$, 
we  have $\alpha_y + \ldots + \alpha_z \le r$, so in particular, $\alpha_y + \ldots + \alpha_z - \alpha_x \le r$, and item~1 holds. 

To prove item~2,  observe that by construction of $\CIT (I_1)$, we have $I = \overline {J}$, 
where $J$ is as defined in Equation~\ref{5.11}. 
We have $y^{\delta(I_1)}  \in J$, and  
$\ord_{R}(y^{\delta(I_1)})=\delta(I_1)$.    Also we have
$$
\ord_{R}(x^{\delta(I_1)+ \nu_x (\alpha) }y^{\alpha_y} \cdots z^{\alpha_z})
=\delta(I_1)+\alpha_x \geq \delta(I_1).
$$
Hence $\ord_{R}(J)=\delta(I_1)$. Since 
 $\ord_{R}(\overline{J})=\ord_{R}(J)$, we have $ \ord_{R}(I)= \delta(I_1)$.

Since $x_1^{n_x} \in \Delta (I_1)$  the definition of $J$ gives $x^{\delta (I_1) + n_x} \in J$.   
 Remark~\ref{monoprop} implies that    $x^n \in \Delta(J)$ if and only 
if $x^n \in \Delta(\overline J)$. 
Since every other monomial in Equation~\ref{5.11} is divisible by one of the variables  $y, \ldots, z$, it follows that 
$x^{\delta (I_1) + n_x} \in \Delta (J)$.   This proves  item~3.
\end{proof} 

\begin{remark}\label{compat2}
Assume the notation of Setting~\ref{Setting4}, and let 
$I_1$ be a complete $\m_1$-primary monomial ideal in $R_1$.    Let 
the ideal  $J$ be as in   Equation~\ref{5.11}.    To see that 
$I = \overline J$    is   the inverse transform of $I_1$ as 
defined by Lipman in Lemma 2.3 of \cite{L},   it suffices to observe that with 
the notation of Definition~\ref{5.1},  we have 
\begin{enumerate} 
 \item  The transform of $I$ in $^xS$ is a monomial ideal that localizes in $R_1$ to the ideal $I_1$.
This is clear by definition of $I$.
\item The transform of $I$ in any of the other $d-1$ affine components $^yS = R [\frac{\m}{y}]$, $\ldots$, and $^zS = R [\frac{\m}{z}]$ 
is  the unit ideal.
This is clear because $y^{\delta (I_1)}$, $\ldots$, and $z^{\delta (I_1)}$ are in $I$.
\item  The ideal $I$ is not a $*$-multiple of $\m$. 
\end{enumerate}
To see that $I$ is not a $*$-multiple of $\m$, let $L = I : \m$, and assume by way of contradiction 
that $I$ is a $*$-multiple of $\m$, say there a complete ideal $K$ such that $I = \m * K$.
Since $\m K \subset I$,   we have $K \subset L$.
Thus $\m * L \subset \m * K  =I   \subset  \m * L$, so we may assume $\m * L = I$.
The ideal $L$ is a complete monomial ideal, and the ideal $\m L$ is a monomial ideal.
Since $y^{\delta (I_1)}, \ldots, z^{\delta (I_1)} \in \m * L$, it follows that $y^{\delta (I_1)}, \ldots, z^{\delta (I_1)} \in \m L$ by Remark~\ref{monoprop}.\ref{monoprop5}.
But this implies that $y^{\delta (I_1) - 1}, \ldots, z^{\delta (I_1) - 1} \in L$.
Since the transform $L^{R_1}$ of $L$ in $R_1$ is $I_1$, it follows that $y_1^{\delta (I_1) - 1}, \ldots, z_1^{\delta (I_1) - 1} \in I_1$, 
which is a contradiction.    
\end{remark}

\begin{remark}\label{compat3}
Assume the notation of Setting~\ref{Setting4}, and let $I$ be an $\m$-primary complete monomial ideal with exactly one base point $R_1$ in its first neighborhood.
 Lemma 2.3 of \cite{L} implies that there exists a nonnegative integer $n$ such that,
	$$I = \m^n * \CIT (I^{R_1}).$$
Thus if $\ord_{R} (I) = r$, then Equation~\ref{5.11} implies that $(y, \ldots, z)^{r} \subset I$.
\end{remark}

\begin{proposition}\label{eq9}
Assume the notation of Setting~\ref{Setting4}, and let $I_1$ be a complete 
$\m_1$-primary monomial ideal of $R_1$. Let $I  = \CIT (I_1)$ and $\delta = \delta(I_1)$. 
Then:
\begin{equation}\label{5.129}
I~ = ~
\Big(\{  \alpha = x^{\alpha_x} y^{\alpha_y} \cdots z^{\alpha_z} ~|~ x^{- \delta} \alpha = x^{\alpha_x + \alpha_y + \ldots + \alpha_z - \delta} y_1^{\alpha_y} \cdots z_1^{\alpha_z} \in I_1\} \Big)R.
\end{equation}
Thus for $a, b, \ldots, c \in \N_0$  with   $ a +b + \ldots + c  = \delta $,  we have 
$$
x^{a} y^{b} \cdots z^{c}  ~ \in  ~ I      ~\iff    ~    y_1^{b} \cdots z_1^{c}  ~ \in  ~   I_1.
$$
Hence the monomials in $I$ of minimal order are determined by the monomials 
in $I_1$ involving only the $d - 1$ elements $y_1, \ldots, z_1$.
\end{proposition}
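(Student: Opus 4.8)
The plan is to reduce (\ref{5.129}) to the assertion that, for every monomial $\alpha$ of $R$, one has $\alpha\in I$ if and only if $x^{-\delta}\alpha\in I_1$, and then prove the two implications separately. For the reduction: both $I=\CIT(I_1)=\overline J$ (a monomial ideal by Remark~\ref{monoprop}.\ref{monoprop4}) and the ideal on the right of (\ref{5.129}) are monomial ideals, so by item~1 of Remark~\ref{monoprop} it is enough to compare their monomials. For a monomial $\alpha=x^{\alpha_x}y^{\alpha_y}\cdots z^{\alpha_z}$ of $R$ one has $\ord_R(\alpha)=\alpha_x+\alpha_y+\cdots+\alpha_z$, so $x^{-\delta}\alpha=x^{\ord_R(\alpha)-\delta}y_1^{\alpha_y}\cdots z_1^{\alpha_z}$ lies in $R_1$ exactly when $\ord_R(\alpha)\ge\delta$, and $\alpha\in I$ already forces $\ord_R(\alpha)\ge\ord_R(I)=\delta$ by item~2 of Lemma~\ref{5.14}. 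Since every monomial of $R$ is a monomial of $R_1$, the monomials $\alpha$ of $R$ with $x^{-\delta}\alpha\in I_1$ are stable under multiplication by monomials of $R$, hence are precisely the monomials of the right-hand ideal of (\ref{5.129}); this gives the stated reduction.

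For $\alpha\in I\Rightarrow x^{-\delta}\alpha\in I_1$ I would argue through the transform. With $S={}^xS$ and $\delta=\ord_R(I)$ (item~2 of Lemma~\ref{5.14}), Definition~\ref{2.1} gives $IS=x^{\delta}I^{S}$, where $I^{S}$ is the transform of $I$ in $S$, and item~1 of Remark~\ref{compat2} tells us that $I^{S}$ is a monomial ideal with $I^{S}R_1=I_1$. Then $\alpha\in I\subseteq IS=x^{\delta}I^{S}$ forces $x^{-\delta}\alpha\in I^{S}\subseteq I^{S}R_1=I_1$.

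The converse, $x^{-\delta}\alpha\in I_1\Rightarrow\alpha\in\overline J=I$, is the crux, and here I would use monomial valuations. For $w=(w_1,\ldots,w_d)\in\mathbb{N}_0^{d}\setminus\{0\}$ let $\nu_w$ denote the monomial valuation of $R$ giving $x,y,\ldots,z$ the weights $w_1,\ldots,w_d$; by the Newton-polyhedron description of integral closures of monomial ideals (valid here by passing to the completion $\widehat R=k[[x,y,\ldots,z]]$ and using Remark~\ref{monoprop}.\ref{monoprop4}), it suffices to show $\nu_w(\alpha)\ge\nu_w(J)$ for every such $w$. I would split on the size of $w_1$. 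If $w_1\le\min(w_2,\ldots,w_d)$, then $w':=(w_1,\,w_2-w_1,\,\ldots,\,w_d-w_1)\in\mathbb{N}_0^{d}\setminus\{0\}$, and the monomial valuation $\nu_{w'}$ of $R_1$ relative to $x_1,y_1,\ldots,z_1$ agrees with $\nu_w$ on $\mathcal{Q}(R)$, as one sees by evaluating both on $x=x_1,\ y=x_1y_1,\ \ldots$; since $x^{-\delta}\alpha$ is a monomial lying in the monomial ideal $I_1$, it is a multiple of some $\beta\in\Delta(I_1)$, so $\nu_{w'}(x^{-\delta}\alpha)\ge\nu_{w'}(\beta)$, and combining this with $\nu_w(\alpha)=\delta w_1+\nu_{w'}(x^{-\delta}\alpha)$ and $\nu_w(x^{\delta}\beta)=\delta w_1+\nu_{w'}(\beta)$ yields $\nu_w(\alpha)\ge\nu_w(x^{\delta}\beta)\ge\nu_w(J)$, the last inequality because $x^{\delta}\beta\in J$. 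If instead $m:=\min(w_2,\ldots,w_d)<w_1$, then, all $w_i$ being $\ge m$ and the exponents of $\alpha$ nonnegative, $\nu_w(\alpha)\ge m\cdot\ord_R(\alpha)\ge m\delta$, while $y^{\delta},\ldots,z^{\delta}\in J$ gives $\nu_w(J)\le\min(\delta w_2,\ldots,\delta w_d)=m\delta$, so $\nu_w(\alpha)\ge\nu_w(J)$ again. This proves $\alpha\in\overline J=I$, and hence (\ref{5.129}).

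The two remaining statements are then immediate: specializing (\ref{5.129}) to monomials $\alpha$ of order $\delta$, for which $x^{-\delta}\alpha=y_1^{\alpha_y}\cdots z_1^{\alpha_z}$, gives $x^ay^b\cdots z^c\in I\iff y_1^b\cdots z_1^c\in I_1$ when $a+b+\cdots+c=\delta$, and these are exactly the monomials of $I$ of minimal order since $\delta=\ord_R(I)$. The main obstacle is the converse implication, and within it the weight vectors $w$ with $w_1>\min(w_2,\ldots,w_d)$ — precisely those not arising from monomial valuations of $R_1$; I expect these to be the part that needs care, and they are handled by the order bound $\ord_R(\alpha)\ge\delta$ together with the fact that $y^{\delta},\ldots,z^{\delta}\in J$. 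I would also verify explicitly that the monomial-valuation (Newton-polyhedron) characterization of $\overline J$ is legitimate in this a priori non-graded local setting, which follows from Remark~\ref{monoprop}.\ref{monoprop4} and faithfully flat descent along $R\hookrightarrow\widehat R$.
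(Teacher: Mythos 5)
Your proof is correct, but it follows a genuinely different route from the paper's. The paper's argument is essentially one line: via Remark~\ref{compat2} and Lipman's Lemma~2.3 in \cite{L} it records the identity $I = x^{\delta} I_1 \cap R$, and Equation~\ref{5.129} is then a restatement of this for monomials, using that $I$ is a monomial ideal. You avoid that black box and instead prove the two inclusions of the monomial equivalence separately. The easier containment ($\alpha \in I \Rightarrow x^{-\delta}\alpha \in I_1$) you get from the transform equation $IS = x^{\delta}I^S$ and $I^S R_1 = I_1$ (item~1 of Remark~\ref{compat2}). The harder one ($x^{-\delta}\alpha \in I_1 \Rightarrow \alpha \in \overline{J}$) you prove by verifying the Newton-polyhedron inequalities $\nu_w(\alpha) \ge \nu_w(J)$ for every integer weight vector $w$, split according to whether $w_1 \le \min(w_2,\ldots,w_d)$, in which case $\nu_w$ pulls back to a monomial valuation $\nu_{w'}$ on $R_1$ and you compare against a generator $x^{\delta}\beta$ of $J$, or not, in which case $\ord_R(\alpha) \ge \delta$ together with $y^{\delta},\ldots,z^{\delta} \in J$ gives $\nu_w(\alpha) \ge \delta\min(w_2,\ldots,w_d) \ge \nu_w(J)$. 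The case analysis is sound, the edge cases ($w_1 = 0$, $w$ proportional to $(1,\ldots,1)$) are harmless, and your appeal to faithful flatness along $R \hookrightarrow \widehat{R}$ together with Remark~\ref{monoprop}.\ref{monoprop4} is the right way to legitimize the Newton-polyhedron criterion in this local setting. The trade-off: you gain a self-contained proof that effectively reproves the nontrivial half of $I = x^{\delta}I_1 \cap R$ rather than citing Lipman's characterization, at the cost of invoking Newton-polyhedron machinery that the paper never needs.
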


\begin{proof}    
By Remark~\ref{monoprop}.\ref{monoprop4}, $I$ is a monomial ideal, and by Remark~\ref{compat2},   
$I$ is equal to $x^{\delta} I_1 \cap R$.
Let $x^a y^b \cdots z^c \in R$ be a monomial.   We have  $x^a y^b \dots z^c \in I$ if and only 
if $x^a y^b \cdots z^c \in x^{\delta} I_1$.
Rewriting $x^a y^b \cdots z^c = x^{a + b + \ldots + c} y_1^{b} \cdots z_1^{c}$, it follows   that 
$x^a y^b \cdots z^c \in x^{\delta} I_1  \iff    x^{a - \delta  + b + \ldots + c} y_1^{b} \cdots z_1^{c} \in I_1$.  
The final assertion  is an immediate consequence of Equation~\ref{5.129}.
\end{proof}

\begin{lemma}\label{mingens}
Assume the notation of Setting~\ref{Setting4}, and let $I_1$ be a complete $\m_1$-primary 
monomial ideal of $R_1$.
Let $I := \CIT (I_1)$ in $R$  and let $\delta = \delta(I_1)$.

\begin{enumerate}
\item
For every $\alpha = x^{a} y_1^{b} \cdots z_1^{c} \in \Delta (I_1)$, we have
$x^{\delta } \alpha = x^{\delta  + a - b - \ldots - c} y^{b} \cdots z^{c} \in \Delta (I)$.
Thus the map 
\begin{equation} \label{5.131}
\phi : \Delta (I_1) \longrightarrow  \Delta (I) \qquad \text{  defined by } \qquad 
\phi(\alpha)  = x^{\delta } \alpha
\end{equation}
is a one-to-one map from $\Delta (I_1)$ 
into $\Delta (I)$.
In particular, $\mu (I) \ge \mu (I_1)$.

\item
Every monomial in $\Delta (I)$ has the form $y_1^{e} \cdots z_1^{f} x^{\delta } \gamma$ for some 
$\gamma = x^{a} y_1^{b} \cdots z_1^{c} \in \Delta (I_1)$, where $e + \ldots + f \le \delta  + a - (b + \ldots + c)$.
Thus every minimal monomial generator  of $I$ is obtained from the set $x^{\delta } \Delta (I_1)$ 
by possibly replacing  $x^i$ by  $y^j \cdots z^k$, where $i = j+ \cdots + k \le (j + \ldots + k) + (b + \ldots + c) \le \delta $.  
\end{enumerate}
\end{lemma}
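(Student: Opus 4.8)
The plan is to run everything through the identification $I=x^{\delta}I_1\cap R$ supplied by Proposition~\ref{eq9} (equivalently Remark~\ref{compat2}), together with the elementary facts from Remark~\ref{monoprop}: $I$ and $I_1$ are monomial ideals, a monomial lies in one of them precisely when it is a multiple of a monomial in $\Delta(I)$, resp.\ $\Delta(I_1)$, and for a monomial ideal the set of minimal monomial generators is a minimal generating set, so that $\mu(I)=|\Delta(I)|$ and $\mu(I_1)=|\Delta(I_1)|$. The one point to keep in mind is that if $\beta$ is a monomial of $R$ lying in $I$, then $x^{-\delta}\beta$ is not merely an element of $\mathcal Q(R)$ but an actual monomial of $R_1$: $x^{-\delta}\beta\in I_1\subseteq R_1$ forces its $x_1$-exponent to be $\ge 0$. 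Similarly $\m\subseteq\m_1$, so a nonunit monomial of $R$ stays a nonunit in $R_1$.

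\emph{Item (1).} Given $\alpha=x^{a}y_1^{b}\cdots z_1^{c}\in\Delta(I_1)$, the definition of $\delta=\delta(I_1)$ gives $b+\cdots+c-a=-\nu_x(\alpha)\le\delta$, so $x^{\delta}\alpha=x^{\delta+a-b-\cdots-c}y^{b}\cdots z^{c}$ is an actual monomial of $R$, and it lies in $I$ because $x^{-\delta}(x^{\delta}\alpha)=\alpha\in I_1$. To see it is a minimal generator, suppose $x^{\delta}\alpha=\beta\rho$ with $\beta\in\Delta(I)$ and $\rho$ a nonunit monomial; dividing by $x^{\delta}$ gives $\alpha=(x^{-\delta}\beta)\rho$ in $R_1$. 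Writing $x^{-\delta}\beta\in I_1$ as a multiple of some $\gamma'\in\Delta(I_1)$, we get that $\gamma'$ divides $\alpha$; since both lie in $\Delta(I_1)$, $\gamma'=\alpha$, so $\alpha$ divides $x^{-\delta}\beta$ which divides $\alpha$; hence $x^{-\delta}\beta=\alpha$, $\rho=1$, a contradiction. Thus $\phi(\alpha)=x^{\delta}\alpha\in\Delta(I)$; injectivity of $\phi$ is clear since $x^{\delta}$ is a nonzerodivisor, whence $\mu(I_1)=|\Delta(I_1)|\le|\Delta(I)|=\mu(I)$.

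\emph{Item (2).} Let $\beta=x^{\beta_x}y^{\beta_y}\cdots z^{\beta_z}\in\Delta(I)$ and set $m:=\beta_x+\beta_y+\cdots+\beta_z-\delta\ge 0$, so $x^{-\delta}\beta=x^{m}y_1^{\beta_y}\cdots z_1^{\beta_z}\in I_1$. The crux is to produce $\gamma\in\Delta(I_1)$ dividing $x^{-\delta}\beta$ whose $x_1$-exponent is exactly $m$. Granting this, $(x^{-\delta}\beta)/\gamma$ has $x_1$-exponent $0$, so it equals $y_1^{e}\cdots z_1^{f}$ for some $e,\dots,f\ge 0$, and then $\beta=x^{\delta}\gamma\cdot y_1^{e}\cdots z_1^{f}$; writing $\gamma=x^{a}y_1^{b}\cdots z_1^{c}$, the requirement that $\beta$ have nonnegative $x$-exponent is exactly $e+\ldots+f\le\delta+a-(b+\ldots+c)$, which is the first displayed inequality. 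To produce $\gamma$, argue by contradiction: if every $\gamma'\in\Delta(I_1)$ dividing $x^{-\delta}\beta$ has $x_1$-exponent $\le m-1$ (so $m\ge1$), then each such $\gamma'$ also divides $x^{m-1}y_1^{\beta_y}\cdots z_1^{\beta_z}$, and since $x^{-\delta}\beta\in I_1$ is a multiple of at least one such $\gamma'$, we get $x^{m-1}y_1^{\beta_y}\cdots z_1^{\beta_z}\in I_1$. If $\beta_x\ge1$, this reads $x^{-\delta}(\beta/x)\in I_1$, so $\beta/x\in I$ by Proposition~\ref{eq9}, contradicting $\beta\in\Delta(I)$. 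If $\beta_x=0$, then $\beta_y+\cdots+\beta_z=m+\delta>\delta$, but $(y,\ldots,z)^{\delta}\subseteq I$ by the shape of $J$ in Equation~\ref{5.11} (cf.\ Remark~\ref{compat3}), so $\beta$ is a proper multiple of a monomial in $y,\ldots,z$ of degree $\delta$ already in $I$, again contradicting minimality. Finally, the ``Thus'' reformulation is a rewriting: $\beta=\phi(\gamma)\cdot(y_1^{e}\cdots z_1^{f})$ replaces the factor $x^{e+\ldots+f}$ of $\phi(\gamma)$ by $y^{e}\cdots z^{f}$, and $e+\ldots+f+(b+\ldots+c)=\beta_y+\cdots+\beta_z\le\delta$, since a minimal generator of $I$ whose $y,\ldots,z$-degree exceeded $\delta$ would, exactly as above, be a proper multiple of an element of $(y,\ldots,z)^{\delta}\subseteq I$.

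The step I expect to be the main obstacle is the production in item (2) of the minimal generator $\gamma\in\Delta(I_1)$ dividing $x^{-\delta}\beta$ with maximal possible $x_1$-exponent $m$. This is precisely where the hypothesis that $\beta$ is a \emph{minimal} generator of $I$ (not just an element of $I$) must enter, and the two-case argument --- dividing $\beta$ by $x$ when $\beta_x\ge1$, and using $(y,\ldots,z)^{\delta}\subseteq I$ when $\beta_x=0$ --- is the heart of the proof; everything else is bookkeeping with monomial exponents.
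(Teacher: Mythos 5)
Your proof is correct and follows essentially the same route as the paper: both arguments run through the identification $I = x^{\delta} I_1 \cap R$ from Proposition~\ref{eq9}, use $(y,\ldots,z)^{\delta} \subseteq I$ as the replacement for minimality when the $x$-exponent vanishes, and in item~(2) split on whether the $x$-exponent of the element of $\Delta(I)$ is zero. The only cosmetic difference is in item~(2): the paper fixes an arbitrary $\gamma \in \Delta(I_1)$ dividing the transform and solves for the $x$-power $s$ (showing $s \ge 0$ and then $s = \alpha_x$), whereas you directly select a $\gamma$ with maximal $x_1$-exponent and derive a contradiction via $\beta/x \in I$ when $\beta_x \ge 1$; both hinge on exactly the same two facts and on the fact that $\beta$ is a \emph{minimal} generator.
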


\begin{proof}
Recall by Equation~\ref{5.129} that the monomials in $I$ are the monomials in $x^{\delta} I_1$ 
that are in $R$.

To see item 1, let $\alpha = x^{\alpha_x} y_1^{\alpha_y} \cdots z_1^{\alpha_z} \in \Delta (I_1)$ 
be as in the statement  of item~1.
Equation~\ref{5.11} implies that  
$x^{\delta} \alpha = x^{\alpha_x - (\alpha_y + \ldots + \alpha_z) + \delta}y^{\alpha_y} \cdots z^{\alpha_z} \in I$. 
We  show that $x^{\delta} \alpha$  is in $\Delta(I)$.
Let $\beta = x^{\beta_x} y^{\beta_y} \cdots z^{\beta_z} \in I$ be a monomial 
that  divides $x^\delta \alpha$.  Then 
$$
\beta_x \le \alpha_x - (\alpha_y + \ldots + \alpha_z) + \delta,\quad 
\beta_y \le \alpha_y,
\quad \ldots,
\quad \beta_z \le \alpha_z.
$$
It follows that  
$x^{- \delta} \beta = x^{\beta_x + (\beta_y + \ldots + \beta_z) - \delta} y_1^{\beta_y} \cdots z_1^{\beta_z}$ 
is in $I_1$, and we have  
$$
\beta_x + (\beta_y + \ldots + \beta_z) - \delta \le 
(\delta + \alpha_x - (\alpha_y + \ldots + \alpha_z)) + (\alpha_y + \ldots + \alpha_z) - \delta = \alpha_x.
$$
Hence  $x^{- \delta} \beta$ divides  $\alpha$ in $R_1$.  Since 
$\alpha \in \Delta(I_1)$, we have  $x^{- \delta} \beta = \alpha$ and 
$\beta = x^{\delta} \alpha$. This proves item~1.

To see item 2, let $\alpha = x^{\alpha_x} y^{\alpha_y} \cdots z^{\alpha_z} \in \Delta (I)$, and consider  its transform 
 $x^{- \delta} \alpha = x^{\alpha_x + (\alpha_y + \ldots + \alpha_z) - \delta} y_1^{\alpha_y} \cdots z_1^{\alpha_z} \in I_1$.
Then $x^{- \delta} \alpha$ is divisible by some $\beta = x^{\beta_x} y_1^{\beta_y} \cdots z_1^{\beta_z} \in \Delta (I_1)$  
and  we have the inequalities,
	$$\beta_x \le \alpha_x + (\alpha_y + \ldots + \alpha_z) - \delta, \quad \beta_y \le \alpha_y, \quad \ldots, \quad \beta_z \le \alpha_z.$$
Consider the integer $s := \beta_x - (\alpha_y + \ldots + \alpha_z) + \delta$.  Then  $s \le \alpha_x$.
Let $\gamma = y_1^{\alpha_y - \beta_y} \cdots z_1^{\alpha_z - \beta_z}$.
Then  $\gamma \beta = x^{\beta_x} y_1^{\alpha_y} \cdots z_1^{\alpha_z} \in I_1$ and $x^{\delta} x^{- s} \gamma \beta = y^{\alpha_y} \cdots z^{\alpha_z} \in R$.
We first show that $s \ge 0$.

Suppose by way of contradiction that $s < 0$.
Then   $x^{-s} \gamma \beta \in I_1$  and  it follows   that 
$x^{\delta} x^{-s} \gamma \beta = y^{\alpha_y} \cdots z^{\alpha_z} \in I$.   
Since $\alpha \in \Delta(I)$, this monomial is $\alpha$, and we have $\alpha_x = 0$.
However, $(y, \ldots, z)^{\delta } \subset I$  implies  $\alpha_y + \ldots + \alpha_z \le \delta$, 
and this implies  $s \ge 0$,  a contradiction.

Thus $s \ge 0$.
That $\gamma \beta \in I_1$ implies $x^{\delta} \gamma \beta = x^{s} y^{\alpha_y} \cdots z^{\alpha_z} \in I$.  
Since $\alpha \in \Delta(I)$, it follows that $\alpha$ divides this monomial, 
so we have $\alpha_x \le s$, and by construction, $s \le \alpha_x$, so $s = \alpha_x$.
This proves item~2.
\end{proof}

 For a class of monomial ideals $I$ that includes 
complete inverse transforms,  we prove in Theorem~\ref{mingens3}  
that  the minimal number of generators of $I$ is completely determined by the order of $I$.

\begin{theorem}\label{mingens3}
Let  $(R,\m)$ be a $d$-dimensional  equicharacteristic regular local ring,  and 
fix $d$ elements $x,y, \ldots, z$ such
that $\m = (x, y, \ldots, z)R$.    Let   $I$ be an $\m$-primary monomial ideal   
with  $\ord_{R} (I) = r$.   If 
$(y, \ldots, z)^{r} \subset I$  and $I$ 
 is contracted from $S = R[\frac{\m}{x}]$,   
 then $\mu (I) = \mu (\m^r) = \binom{d+r-1}{r}$.
\end{theorem}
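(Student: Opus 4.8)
The plan is to determine the set $\Delta(I)$ of minimal monomial generators of $I$ explicitly, by separating it into the generators divisible by $x$ and those not divisible by $x$, and putting each part in bijection with a set of monomials in the remaining $d-1$ variables $y,\ldots,z$. For a monomial $m$ in $y,\ldots,z$, set $a(m):=\min\{\,a\ge 0 : x^{a}m\in I\,\}$; this minimum exists because $I$ is $\m$-primary (so $x^{N}\in I$ for some $N$, whence $x^{N}m\in I$) and the set of such $a$ is upward closed because $I$ is an ideal. First I would record two elementary facts: since $\ord_{R}(I)=r$ we have $I\subseteq\m^{r}$, so every monomial in $y,\ldots,z$ of degree $<r$ lies outside $I$ and therefore $a(m)\ge 1$ for every such $m$; and since $I$ is contracted from $S=R[\frac{\m}{x}]$, item~(1) of Proposition~\ref{contracted} applied with $K=I$ shows that $xf\in I$ forces $gf\in I$ for every $g\in\m$, that is, $(I:x)=(I:\m)$.

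The key step is a shifting inequality: if $m$ is a monomial in $y,\ldots,z$ with $\deg m<r$ and $v\in\{y,\ldots,z\}$, then $a(vm)\le a(m)-1$. Indeed $x^{a(m)}m\in I$ with $a(m)\ge 1$, so $x^{a(m)-1}m\in(I:x)=(I:\m)$, and hence $x^{a(m)-1}(vm)=v\cdot x^{a(m)-1}m\in I$, giving $a(vm)\le a(m)-1$. Iterating this from $m$ upward one variable at a time until the degree reaches $r$, where $a$ vanishes because the hypothesis $(y,\ldots,z)^{r}\subseteq I$ puts every degree-$r$ monomial in $y,\ldots,z$ into $I$, one obtains $a(m)+\deg m\ge r$ for every monomial $m$ in $y,\ldots,z$ of degree $\le r$.

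From this I would read off the generators. A minimal generator of $I$ not divisible by $x$ is a monomial $m$ in $y,\ldots,z$ with $m\in I$, hence $\deg m\ge r$; and in fact $\deg m=r$, since otherwise $m$ is a proper multiple of a degree-$r$ monomial of $(y,\ldots,z)^{r}\subseteq I$. Conversely, every degree-$r$ monomial in $y,\ldots,z$ lies in $I$, and its proper divisors have order $<r$ and so lie outside $I$; thus it is a minimal generator. This accounts for $\binom{d+r-2}{r}$ generators. Next, a minimal generator $\alpha=x^{a}m$ with $a\ge 1$ and $m$ a monomial in $y,\ldots,z$ must satisfy $a=a(m)$ (otherwise $\alpha/x=x^{a-1}m\in I$) and $\deg m\le r-1$ (otherwise $m\in I$, forcing $a(m)=0$). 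Conversely, for each monomial $m$ in $y,\ldots,z$ of degree $\le r-1$ the monomial $x^{a(m)}m$ lies in $\Delta(I)$: it is in $I$; dividing by $x$ leaves $I$ by minimality of $a(m)$; and dividing by a variable $v\mid m$ leaves $I$ because, writing $n=m/v$ (of degree $<r$, so $a(n)\ge 1$), the shifting inequality gives $a(m)=a(vn)\le a(n)-1<a(n)$, whence $x^{a(m)}n\notin I$. This sets up a bijection between the $x$-divisible minimal generators and the monomials in $y,\ldots,z$ of degree $\le r-1$, which number $\sum_{t=0}^{r-1}\binom{d+t-2}{t}=\binom{d+r-2}{r-1}$ by the hockey-stick identity. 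Adding the two counts and using Pascal's rule, $\mu(I)=\binom{d+r-2}{r}+\binom{d+r-2}{r-1}=\binom{d+r-1}{r}$, and this equals $\mu(\m^{r})$ because $\m^{r}$ is minimally generated by the $\binom{d+r-1}{r}$ monomials of degree $r$ in $x,y,\ldots,z$.

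I expect the real obstacle to be verifying that $x^{a(m)}m$ is a genuine \emph{minimal} generator and not merely an element of $I$; this is exactly where the hypothesis that $I$ is contracted from $S$ is used essentially, through the identity $(I:x)=(I:\m)$ and the shifting inequality, and without it the count is strictly larger. The hypothesis $(y,\ldots,z)^{r}\subseteq I$ serves to anchor the downward induction in the shifting inequality (forcing $a=0$ at degree $r$) and to rule out $x$-free minimal generators of degree exceeding $r$. I would also double-check the small-$r$ cases ($r=0,1$) to confirm the binomial identities remain valid in those degenerate situations.
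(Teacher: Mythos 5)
Your proof is correct and follows essentially the same route as the paper: both build a bijection between $\Delta(I)$ and the set of monomials in $y,\ldots,z$ of degree at most $r$ by pairing such a monomial $m$ with $x^{a(m)}m$, where $a(m)$ is the minimal exponent making $x^{a(m)}m\in I$. Where the paper verifies minimality of $x^{a(m)}m$ by a contradiction argument that takes a properly dividing monomial of $I$ with minimal $x$-exponent and then lowers that exponent further via contractedness from $S$, you repackage the same contractedness input as the identity $(I:x)=(I:\m)$ and the resulting shifting inequality $a(vm)\le a(m)-1$, a cleaner but logically equivalent formulation of the mechanism; your split of the count into the degree-$r$ and degree-$\le r-1$ pieces and the Pascal-rule finish is likewise just a finer breakdown of the paper's single count $|\mathcal S|=\binom{d+r-1}{r}$.
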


\begin{proof}
Let $\mathcal{S}$ denote the set of monomials in $y, \ldots, z$ of degree less than or equal to $r$.    To prove the theorem, it
suffices to  show that the elements of $\Delta (I)$ are in one-to-one correspondence 
with the  elements of $\mathcal S$.
Since $I$ is $\m$-primary, for each  monomial $\alpha \in \mathcal{S}$ there is 
a nonnegative integer $c$ such that $x^c \alpha \in I$.
By choosing  $c$ to be minimal with this property,   we  obtain a one-to-one map of sets $\varphi : \mathcal{S} \rightarrow I$.
Notice that for each  monomial $\alpha \in \mathcal{S}$ of degree $r$,  we have  $\varphi (\alpha) = \alpha$ and $\alpha \in \Delta (I)$.

Given a monomial $\beta = x^{\beta_x} y^{\beta_y} \cdots z^{\beta_z} \in I$, set $\alpha = y^{\beta_y} \cdots z^{\beta_z}$.
If the degree of $\alpha$ is greater than $r$, then $\alpha$ is divisible 
by an element in $\varphi (\mathcal S)$.
If the degree of $\alpha$ is less than or equal to $r$, then $\beta$ is 
divisible by  $\varphi (\alpha)$.
We conclude that     $I    \subseteq   \varphi(\mathcal S) R$.  Since the elements 
in $\varphi(\mathcal S)$ are
monomials,  it follows that $\Delta (I)  \subseteq     \varphi(\mathcal S)$.

It remains  to  show that every element in $\varphi (\mathcal{S})$ is in $\Delta (I)$.
Suppose by way of contradiction that there  exists 
$\alpha = y^{\alpha_y} \cdots z^{\alpha_z} \in \mathcal{S}$ 
such that $\varphi (\alpha) = x^{c} \alpha   \in I$ is not in $\Delta (I)$.
There exists a  monomial $\beta = x^{\beta_x} y^{\beta_y} \cdots z^{\beta_z} \in I$ that  properly  divides $x^c\alpha$.
Take $\beta$ so that $\beta_x$ is minimal among monomials in $I$ that   properly  divide $x^c\alpha$.
The minimality of $c$ implies that for some variable $w$ other than $x$, $\beta_w < \alpha_w$.
We may assume without loss of generality that $w = y$.
If $\beta_x = 0$, then $r = \beta_y + \ldots + \beta_z < \alpha_y + \ldots  +\alpha_z$, 
a contradiction to the assumption that $\alpha$ has degree at most $r$.
The fact that $I$ is contracted from $S$ implies that 
$x^{\beta_x - 1} y y^{\beta_y} \cdots z^{\beta_z}$ is an element in $I$ that 
properly divides $x^{c} \alpha$.
This contradicts the minimality of $\beta_x$ and thus completes the proof of Theorem~\ref{mingens3}.
\end{proof}

\begin{corollary}\label{mingens4}
Assume the notation of Setting~\ref{Setting4}, and let $I$ be a complete $\m$-primary monomial ideal 
of order $r$ in $R$ whose only base point in the first neighborhood of $R$ is $R_1$.
Then $\mu (I) = \mu (\m^r)$.
\end{corollary}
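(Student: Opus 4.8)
The plan is to derive this from Theorem~\ref{mingens3}. Put $r=\ord_R(I)$ and $S=R[\frac{\m}{x}]$. By Remark~\ref{compat3} we have $(y,\ldots,z)^{r}\subseteq I$, so in view of Theorem~\ref{mingens3} the only thing left to prove is that $I$ is contracted from $S$, that is, $IS\cap R=I$.

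Since $I$ is complete, $I=\overline I=\bigcap_{V\in\Rees_R I}(IV\cap R)$; therefore it suffices to show that every Rees valuation ring $V$ of $I$ contains $S$, equivalently that $\m V=xV$ for every $V\in\Rees_R I$. Indeed, once this is known, $IS\subseteq IV$ for all such $V$, hence $IS\cap R\subseteq\bigcap_V(IV\cap R)=I\subseteq IS\cap R$, and Theorem~\ref{mingens3} then gives $\mu(I)=\mu(\m^{r})=\binom{d+r-1}{r}$.

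To see that $\m V=xV$ for $V\in\Rees_R I$, I would argue by contradiction. Since $R$ is regular (hence universally catenary and analytically unramified) and $I$ is $\m$-primary, the dimension formula shows that each such $V$ is a prime divisor of $R$ centered on $\m$, so Theorem~\ref{3.1} applies. Suppose $\m V\ne xV$; then $\ord_V(x)>\ord_V(\m)$, so there is a variable $w\in\{y,\ldots,z\}$ with $\ord_V(w)=\ord_V(\m)<\ord_V(x)$. Taking $\Delta(I)$ as the generating set of $I$ in Theorem~\ref{3.1} produces monomials $b_1,\ldots,b_d\in\Delta(I)$ with $b_1V=\cdots=b_dV=IV$ such that $\overline{b_2/b_1},\ldots,\overline{b_d/b_1}$ are algebraically independent over $R/\m$ in $k_v$. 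On one hand $w^{r}\in(y,\ldots,z)^{r}\subseteq I$ gives $\ord_V(I)\le r\,\ord_V(w)$; on the other hand each $b_i\in I\subseteq\m^{r}$ is a monomial of degree at least $r$, so $\ord_V(b_i)\ge r\,\ord_V(\m)=r\,\ord_V(w)$. Since $\ord_V(b_i)=\ord_V(I)$, all these inequalities are equalities, which forces $\deg b_i=r$ and forces every variable occurring in $b_i$ to have $V$-value $\ord_V(w)$; in particular $x$ occurs in none of the $b_i$, so each $b_i$ is a monomial of degree $r$ in $y,\ldots,z$ only. Then each $b_i/b_1$ is a Laurent monomial of total degree $0$ in $y,\ldots,z$, so $b_2/b_1,\ldots,b_d/b_1$ all lie in the subfield $\mathcal K=(R/\m)\bigl(v/z:\ v\in\{y,\ldots,z\},\ v\ne z\bigr)$ of the field of fractions of $R$. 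Because $R$ is equicharacteristic, $x,y,\ldots,z$ are algebraically independent over $R/\m$, so $\mathcal K$ has transcendence degree $d-2$ over $R/\m$; hence the $d-1$ elements $b_2/b_1,\ldots,b_d/b_1$ satisfy a nontrivial polynomial relation over $R/\m$. Reducing that relation modulo $\m_V$, which is legitimate because the $b_i/b_1$ are units of $V$ and the coefficients lie in the coefficient field $R/\m$, yields a nontrivial polynomial relation over $R/\m$ satisfied by $\overline{b_2/b_1},\ldots,\overline{b_d/b_1}$, contradicting their algebraic independence. This contradiction establishes $\m V=xV$.

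The main obstacle is precisely this last step, proving that all Rees valuations of $I$ lie in the $x$-chart. Note that the hypothesis that $R_1={^xR}$ is the only base point of $I$ in the first neighborhood of $R$ is used only through Remark~\ref{compat3}, to secure $(y,\ldots,z)^{r}\subseteq I$; it is that containment, together with $\ord_R(I)=r$ and the transcendence-degree bookkeeping built into Theorem~\ref{3.1}, that rules out a Rees valuation with $\m V\ne xV$.
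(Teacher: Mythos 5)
Your proposal is correct in outline, and it takes a genuinely different route from the paper. The paper's proof is the single line ``In view of Remark~\ref{compat3}, this follows from Theorem~\ref{mingens3}.'' But Theorem~\ref{mingens3} has \emph{two} hypotheses: $(y,\ldots,z)^r\subset I$ (supplied by Remark~\ref{compat3}) and that $I$ is contracted from $S=R[\m/x]$. The paper does not spell out why the second hypothesis holds, whereas you supply a complete argument: show $\m V=xV$ for every $V\in\Rees_R I$, deduce $S\subset V$, and conclude $I=\bigcap_{V\in\Rees I}(IV\cap R)\supseteq IS\cap R\supseteq I$. The sub-argument for $\m V=xV$, via Theorem~\ref{3.1}(3) applied to the generating set $\Delta(I)$ and the degree/transcendence-degree bookkeeping, is exactly the kind of argument the paper itself uses in the proof of Theorem~\ref{shapeofideals} (the $(3)\Rightarrow(4)$ step). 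So your approach is sound and buys an explicit verification that the paper leaves implicit.

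There is, however, one step you should repair. You write that $b_2/b_1,\ldots,b_d/b_1$ lie in the subfield $\mathcal K=(R/\m)\bigl(v/z:\ v\ne z\bigr)$ of $\mathcal Q(R)$, and you later justify reducing a relation modulo $\m_V$ because ``the coefficients lie in the coefficient field $R/\m$.'' Setting~\ref{Setting4} only assumes $R$ equicharacteristic, not complete, so $R$ need not contain a coefficient field, and $R/\m$ is not a subfield of $\mathcal Q(R)$; the field $\mathcal K$ you describe is therefore not well-defined inside $\mathcal Q(R)$. The fix is to do the transcendence-degree count entirely inside $k_v$, which is what Theorem~\ref{3.1} is phrased in anyway: each $b_i/b_1$ is a Laurent monomial of total degree $0$ in $y,\ldots,z$, hence is a Laurent monomial in the $d-2$ ratios $y/z,\ldots,w/z$; consequently each image $\overline{b_i/b_1}\in k_v$ is a Laurent monomial in the $d-2$ elements $\overline{y/z},\ldots,\overline{w/z}$, and so $\overline{b_2/b_1},\ldots,\overline{b_d/b_1}$ lie in the subfield $(R/\m)\bigl(\overline{y/z},\ldots,\overline{w/z}\bigr)\subseteq k_v$, which has transcendence degree at most $d-2$ over $R/\m$. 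This forces the $d-1$ elements $\overline{b_2/b_1},\ldots,\overline{b_d/b_1}$ to be algebraically dependent, contradicting Theorem~\ref{3.1}. With this replacement the proof is complete; the remainder of your argument (use of Theorem~\ref{3.1}(3) with $\{a_i\}=\Delta(I)$, the inequalities forcing $\deg b_i=r$ and forcing $x$ not to appear in any $b_i$, and the final invocation of Theorem~\ref{mingens3}) is correct as written.
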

\begin{proof}
In view of Remark~\ref{compat3}, this follows from Theorem~\ref{mingens3}.
\end{proof}

\begin{example} \label{examtable} 
Assume the notation of Setting~\ref{Setting4} with $d = 3$.
Consider the ideal
$$
J_1  ~= ~  (x^2,~  xy_1,~ xz_1, ~ y_1z_1,~ y_1^3,~ z_1^3)R_1.
$$
Then 
$$
I~=~ \CIT(J_1) ~=~(x^5, ~x^3y,~x^3z,~x^2y^2,~xyz,~x^2z^2, ~(y,~z)^3)R.
$$

In the following tables, the entry in the $i$-th column and $j$-th row gives the integer $c$ such that $x^c y^i z^j \in \Delta (I)$.
The table on the left shows the image of $\Delta (J_1)$ under the map $\phi$ defined in Lemma~\ref{mingens}, and the table on the right shows all of the elements of $\Delta (I)$, obtained from the left table by converting powers of $x$ to powers of $y, z$ as in Lemma~\ref{mingens}.

\begin{center}
\begin{tabular}{c | | c | c | c | c}
& 0 & 1 & 2 & 3 \\
\hline \hline 0 & 5 & 3 & & 0 \\
\hline 1 & 3 & 1 & & \\
\hline 2 & & & & \\
\hline 3 & 0 & & & \\
\end{tabular}
\hspace{2cm}
\begin{tabular}{c | | c | c | c | c}
& 0 & 1 & 2 & 3 \\
\hline \hline 0 & 5 & 3 & 2 & 0 \\
\hline 1 & 3 & 1 & 0 & \\
\hline 2 & 2 & 0 & & \\
\hline 3 & 0 & & & \\
\end{tabular}
\end{center}

\end{example}

\begin{theorem}  \label{shapeofideals}
Assume the notation of Setting~\ref{Setting4}.
Let $I_1$ be a
complete $\m_1$-primary monomial ideal, let $I := \CIT (I_1)$ in $R$,
and let $\delta = \delta(I_1)$.  
Consider the following statements:
\begin{enumerate}
\item $\mu (I) = \mu (I_1)$.
\item $\ord_{R} (I) = \ord_{R_1} (I_1)$.
\item For every $y_1^{b} \cdots z_1^{c} \in \Delta (I_1)$, $b + \ldots + c = \delta$.
\item $\ord_{R}$ is not a Rees valuation of $I$.
\end{enumerate}
Then $(1) \iff (2),   ~~ (2)  \implies (3)$ and $(3) \iff (4)$.
\end{theorem}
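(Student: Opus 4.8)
The plan is to treat the three assertions in turn, reusing the combinatorial picture of $\CIT(I_1)$ established above. Write $\delta=\delta(I_1)$ and for $v\in\N_0^{d-1}$ write $|v|$ for its coordinate sum, $y^{v}$ for the corresponding monomial in $y,\dots,z$, and $y_1^{v}$ for the corresponding monomial in $y_1,\dots,z_1$. By Lemma~\ref{5.14}(2) one has $\ord_R(I)=\delta$; the proof of Lemma~\ref{5.14}(1) gives $(y_1,\dots,z_1)^{\delta}\subseteq I_1$, whence $(y,\dots,z)^{\delta}\subseteq I$ by Remark~\ref{compat3}; since $R_1$ is the only base point of $\CIT(I_1)$ in the first neighborhood of $R$ (Remark~\ref{compat2}), Corollary~\ref{mingens4} gives $\mu(I)=\mu(\m^{\delta})=\binom{d+\delta-1}{\delta}$, while Lemma~\ref{mingens}(1) gives an injection $\Delta(I_1)\hookrightarrow\Delta(I)$, so $\mu(I_1)\le\mu(I)$. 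Finally, by the definition of $\delta(I_1)$ every $\alpha\in\Delta(I_1)$ satisfies $-\nu_x(\alpha)\le\delta$, so a generator $\alpha=y_1^{b}\cdots z_1^{c}$ of $I_1$ not involving $x_1$ has $b+\dots+c=\deg\alpha\le\delta$. Assertion $(2)\Rightarrow(3)$ is then immediate: $(2)$ says every $\alpha\in\Delta(I_1)$ has $\deg\alpha\ge\delta$, which forces $b+\dots+c=\delta$ for those not involving $x_1$. I expect the essential difficulty to be in the implication $(2)\Rightarrow(1)$.

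For $(3)\Leftrightarrow(4)$ I would apply Theorem~\ref{3.1} to the order valuation ring $V=\ord_R$, a prime divisor of the (universally catenary, analytically unramified) ring $R$ centered on $\m$. Taking $\Delta(I)$ as a generating set, Theorem~\ref{3.1} says $V\in\Rees I$ iff there are monomials $b_1,\dots,b_d\in\Delta(I)$ with $b_1V=\dots=b_dV=IV$ — equivalently, since $\ord_R(I)=\delta$, with each $b_j$ of degree $\delta$ — such that the images of $b_2/b_1,\dots,b_d/b_1$ in $k_v=(R/\m)(\overline{y/x},\dots,\overline{z/x})$ are algebraically independent over $R/\m$. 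For degree-$\delta$ monomials those ratios are Laurent monomials in $\overline{y/x},\dots,\overline{z/x}$, and are algebraically independent exactly when the $(y,\dots,z)$-exponent vectors of $b_1,\dots,b_d$ are affinely independent in $\mathbb Q^{d-1}$. By Proposition~\ref{eq9} the exponent vectors of the degree-$\delta$ monomials of $I$ are precisely the $v\in\N_0^{d-1}$ with $|v|\le\delta$ and $y_1^{v}\in I_1$; this set contains all $v$ with $|v|=\delta$ (as $(y_1,\dots,z_1)^{\delta}\subseteq I_1$), which affinely span the hyperplane $\{|v|=\delta\}$, so it affinely spans $\mathbb Q^{d-1}$ iff it contains some $v$ with $|v|<\delta$. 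Hence $V\in\Rees I$ iff a monomial of degree $<\delta$ in $y_1,\dots,z_1$ lies in $I_1$; since a generator of $I_1$ dividing such a monomial cannot involve $x_1$, this happens iff $I_1$ has a generator not involving $x_1$ of degree $<\delta$, i.e. iff $(3)$ fails. Thus $V\notin\Rees I\iff(3)$.

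Finally, $(1)\Leftrightarrow(2)$. Since $\mu(I_1)\le\mu(I)=\binom{d+\delta-1}{\delta}$, $(1)$ is equivalent to $\mu(I_1)=\binom{d+\delta-1}{\delta}$. For $v\in\N_0^{d-1}$ let $d_1(v)$ be the least $t\ge0$ with $x^{t}y_1^{v}\in I_1$ (finite since $I_1$ is $\m_1$-primary); then $x^{t}y_1^{v}\in\Delta(I_1)$ iff $t=d_1(v)$ and $d_1(v-e_i)>d_1(v)$ for each $i$ with $v_i\ge1$, so $\mu(I_1)=|D|$ with $D=\{v: d_1(v-e_i)>d_1(v)\text{ whenever }v_i\ge1\}$; using $-\nu_x(\alpha)\le\delta$ on generators together with $(y_1,\dots,z_1)^{\delta}\subseteq I_1$ one gets $D\subseteq\{v:|v|\le\delta\}$, a set of size $\binom{d+\delta-1}{\delta}$, so $(1)$ holds iff $D=\{v:|v|\le\delta\}$. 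For the forward direction, if $(2)$ holds then completeness of $I_1$ makes its Newton polyhedron a convex polyhedron contained in the half-space $\{(t,v):t+|v|\ge\delta\}$ and meeting the bounding hyperplane along the face through the degree-$\delta$ generators not involving $x_1$; thus the convex lower-envelope function $\widetilde d_1$ of this polyhedron satisfies $\widetilde d_1(v)\ge\max(0,\delta-|v|)$, with equality for $|v|\ge\delta$, and comparing $\widetilde d_1$ along the lattice segment from $v-e_i$ to $v+(\delta-|v|)e_i$ (whose far endpoint lies on that hyperplane) convexity forces $\widetilde d_1(v-e_i)\ge\widetilde d_1(v)+1$; since $d_1=\lceil\widetilde d_1\rceil$ (again by completeness), $d_1(v-e_i)>d_1(v)$, so $v\in D$ and $(1)$ holds. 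Conversely, if $(2)$ fails choose $v$ with $x^{t}y_1^{v}\in I_1$ and $t+|v|<\delta$; increasing one fixed coordinate of $v$ as long as $d_1$ keeps strictly dropping (possible only finitely often) yields $v'$ with $|v'|<\delta$ and some $i$ with $d_1(v'+e_i)=d_1(v')$, so $v'+e_i\in\{v:|v|\le\delta\}\setminus D$ and $\mu(I_1)<\binom{d+\delta-1}{\delta}$, so $(1)$ fails. The crux of the whole argument is this forward step: converting $\ord_{R_1}(I_1)=\delta$ into strict monotonicity of $d_1$ through convexity of the Newton polyhedron of the complete ideal $I_1$.
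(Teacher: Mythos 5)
Your proof is correct, and on the central equivalence $(1)\Leftrightarrow(2)$ it takes a genuinely different route from the paper's. The paper proves $(2)\Rightarrow(1)$ by a reduction to earlier structure: $\ord_{R_1}(I_1)=\delta$ forces $y_1^{\delta},\dots,z_1^{\delta}\in\Delta(I_1)$ and $n_x\ge\delta$, so $I_1$ and $\m_1^{\delta}$ have the same extension in every monomial quadratic transform of $R_1$ except the $x_1$-direction, hence $I_1$ has a single base point in its first neighborhood and Corollary~\ref{mingens4} gives $\mu(I_1)=\mu(\m_1^{\delta})=\mu(I)$. Its $(1)\Rightarrow(2)$ is a contradiction argument that takes a low-degree $\alpha\in\Delta(I_1)$ with $\alpha_x$ minimal, notes $y_1\cdot x^{\delta}\alpha\in I$, and bookkeeps degrees against its dividing generator. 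You instead read off $\mu(I_1)=|D|$ from the staircase function $d_1$, show $D\subseteq\{|v|\le\delta\}$ and $\mu(I)=\binom{d+\delta-1}{\delta}=|\{|v|\le\delta\}|$, and then convert $\ord_{R_1}(I_1)=\delta$ into the strict monotonicity $d_1(v-e_i)>d_1(v)$ directly from convexity of the Newton polyhedron and its lower envelope $\widetilde d_1$ (the inequality $g(0)-g(1)\ge g(0)/N\ge1$, followed by $d_1=\lceil\widetilde d_1\rceil$ from integral closedness as in Remark~\ref{monoprop}). This is self-contained and avoids leaning on the base-point counting corollary for the forward direction, at the price of invoking the Newton-polyhedron description of integral closure; both uses of completeness are essential. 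Your $(3)\Leftrightarrow(4)$ and the paper's are close in spirit (both filter Theorem~\ref{3.1} through the minimal-order monomials described by Proposition~\ref{eq9}), but you make the criterion uniformly combinatorial via affine independence of the $(y,\dots,z)$-exponent vectors in the hyperplane $\{|v|=\delta\}$, whereas the paper gives a transcendence-degree bound for $(3)\Rightarrow(4)$ and calls Proposition~\ref{contracted} for $(4)\Rightarrow(3)$. Everything you wrote checks out; the only cosmetic caveat is that the bounding hyperplane $\{t+|v|=\delta\}$ may also pass through degree-$\delta$ generators involving $x_1$, but your downstream use is only the inequality $\widetilde d_1(v)\ge\max(0,\delta-|v|)$ with equality for $|v|\ge\delta$, which is exactly what holds.
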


\begin{proof}
Lemma~\ref{mingens} implies that item~1 is equivalent to  the map 
$\phi: \Delta(I_1) \to \Delta(I)$ of Equation~\ref{5.131} is
surjective.   
Thus    $\mu(I) = \mu(I_1)$  if  and only if 
\begin{equation}\label{mingens2}
\Delta (I) ~=  ~ \{~ x^{\delta  + a - (b + \ldots + c)} y^{b} \cdots z^c  ~|~ x^{a} y_1^{b} \cdots z_1^{c} \in \Delta (I_1) ~ \}.
\end{equation}

To see that item~1 implies item~2, assume item~1, and suppose by way of contradiction 
that item~2 does not hold.
Then there exists an element 
$\alpha = x^{\alpha_x} y_1^{\alpha_y} \cdots z_1^{\alpha_z} \in \Delta (I_1)$ 
such that $\alpha_x + (\alpha_y + \ldots + \alpha_z) = \ord_{R_1} (\alpha) < \delta$.
Among all such elements, take $\alpha$ to have minimal $\alpha_x$.
The element 
$x^{\delta} \alpha = x^{\alpha_x + \delta - (\alpha_y + \ldots + \alpha_z)} y^{\alpha_y} \cdots
z^{\alpha_z}$ is in $\Delta (I)$ by Equation \ref{mingens2}.
Our assumption implies that 
$\delta + \alpha_x - (\alpha_y + \ldots + \alpha_z) > 0$, so $y_1 x^{\delta} \alpha \in I$, 
and hence is divisible by an element in $\Delta (I)$.
Since $\mu (I) = \mu (I_1)$, $y_1 x^{\delta} \alpha$ is divisible 
by some $x^{\delta} \beta$, where 
$\beta = x^{\beta_x} y_1^{\beta_y} \cdots z_1^{\beta_z} \in \Delta (I_1)$.
That is,
$$
\beta_x + \delta - (\beta_y + \ldots + \beta_z) ~\le ~ \alpha_x + \delta - (1 + \alpha_y + \ldots + \alpha_z), 
$$
\begin{equation}\label{soi1}
\beta_y ~\le ~ \alpha_y + 1,
\quad \ldots,
\quad \beta_z ~ \le ~ \alpha_z.
\end{equation}
In Equation~\ref{soi1}, the  lower dots  represent the fact that for every 
one of our fixed set of minimal generators $w$ for $\m$ other than $x$ or 
$y$, $\beta_w \le \alpha_w$.
Hence
$$
\beta_x ~ \le ~ \alpha_x + (\beta_y + \ldots + \beta_z) - (1 + \alpha_y + \ldots + \alpha_z) ~ \le ~ \alpha_x.
$$
We have either $\beta_x = \alpha_x$ or $\beta_x < \alpha_x$.

Suppose that $\beta_x = \alpha_x$.
Then each of the inequalities in Equation~\ref{soi1} is an equality.
Thus $\alpha$ properly divides $\beta$ in $I_1$, a contradiction to the 
fact that $\beta \in \Delta (I_1)$.

Thus we must have $\beta_x < \alpha_x$, that is, $\beta_x \le \alpha_x - 1$.
Then 	
$$
\ord_{R_1}(\beta) ~ = ~ \beta_x + (\beta_y + \ldots + \beta_z) ~\le ~ 
(\alpha_x - 1) + (1 + \alpha_y + \ldots + \alpha_z) ~\le~
\alpha_x + (\alpha_y + \ldots + \alpha_z) ~ < ~ \delta.
$$
This contradicts the choice of $\alpha$ and completes the proof that item~1 implies item~2.

Assume item~2 holds. To prove item~3 also holds,  let 
$y_1^{\alpha_y} \cdots z_1^{\alpha_z} \in \Delta (I_1)$.  
Lemma~\ref{5.14} implies that $\delta   = \ord_{R} (I)$.  Hence we have
$$
\ord_{R_1} (I_1)~ \le  ~\alpha_y + \ldots + \alpha_z  ~\le  ~\delta  ~ = 
~\ord_{R} (I)~ =  ~\ord_{R} (I_1),
$$
 so equality holds throughout, and item~3 holds.

We next show that item~2 implies item~1.  Let $S_2 := R_1 [\frac{\m_1}{x}]$.  
We  show that $I_1 S_2 \cap R_1 = I_1$.  We have 
$y_1^\delta, \ldots, z_1^\delta \in \Delta(I_1)$ and   $x^{n_x} \in \Delta( I_1)$ for some 
integer $n_x \ge \delta$.
We consider the extension of $I_1$ in the blowup  
$\Proj(R_1[\m_1t])$ of $\m_1$.
Since $\ord_{R_1} (I_1) = \delta$ and $y_1^{\delta}, \ldots, z_1^\delta  \in I_1$, 
the extension of $I_1$ in every monomial quadratic transform of $R_1$ 
other than in the $x_1$-direction is the same as the extension of $\m_1^{\delta}$.
Thus $(S_2)_{(x, \frac{y_1}{x}, \ldots, \frac{z_1}{x})}$ is the unique base 
point of $I_1$ in the first neighborhood of $R_1$.
Corollary~\ref{mingens4} implies that $\mu (I_1) = \mu (\m_1^\delta)$ 
and $\mu (I) = \mu (\m^\delta)$, thus proving item~1.

To see that item 3 implies item 4, consider the monomials of $I$ of minimal order in $R$.
Proposition~\ref{eq9} implies each  such monomial has the form 
$y^{\alpha_y} \cdots z^{\alpha_z}$, where $\alpha_y + \ldots + \alpha_z = \delta$.

Let $w_1, \ldots, w_{d-1}$ denote the variables $y, \ldots, z$.
Let $k$ denote the residue of $R$ and let $K$ denote the residue field of 
the order valuation ring of $R$.
Let $\xi_i$ denote the image of $\frac{w_i}{w_{d-1}}$ in the field $K$, 
and let $L = k (\xi_1, \ldots, \xi_{d-2} )$,
so that the transcendence degree of $L$ over $k$ is $d - 2$.
Then $L$ contains the image of $\frac{\alpha}{\beta}$ in $K$,
for all elements $\alpha, \beta \in I$ of minimal order.
Theorem~\ref{3.1} implies that  $\ord_R$ is not a Rees valuation of $I$.

To see that item 4 implies item 3,  assume  there exists an  element 
$y_1^{\alpha_y} \cdots z_1^{\alpha_z} \in \Delta (I_1)$ such 
that $\alpha_y + \ldots + \alpha_z < \delta$.
By Equation~\ref{5.129}, the element 
$x^{\delta  - (\alpha_y + \ldots + \alpha_z)} y^{\alpha_y} \cdots z^{\alpha_z}$ is 
in $I$, where $\delta  - (\alpha_y + \ldots + \alpha_z) > 0$.
Since $I$ contains an element of minimal order divisible by $x$,   
Proposition~\ref{contracted} implies that 
$\ord_{R}$ is a Rees valuation of $I$.  
\end{proof}

Example~\ref{3not1} demonstrates that items 3 and 4 of  Theorem~\ref{shapeofideals} do not
in general imply items 1 and 2.

\begin{example}   \label{3not1}
Assume the notation of Setting~\ref{Setting4} with $d = 3$.
Let $I_1$ be 
the complete  $\m_1$-primary 
ideal $(x, y_1^2, y_1 z_1, z_1^2)R_1$.
Then $\CIT (I_1) = (x^3, y^2, y z, z^2, x^2 y, x^2 z)$.   The ideal 
$I_1$ satisfies items 3 and 4, but not items 1 and 2 of Theorem~\ref{shapeofideals}.
\end{example}

The ideal $I_1$ of Example~\ref{3not1} is not finitely  supported.   In connection with  
Theorem~\ref{shapeofideals},  we ask:

\begin{questions}  ~ 
\begin{enumerate}
\item  
If the complete monomial ideal $I_1$ in   Theorem~\ref{shapeofideals} is 
finitely supported and 
satisfies items 3 and 4, does it also satisfy items 1 and 2?

\item If $I$ is a finitely supported $\m$-primary complete monomial ideal of order $r$, 
is $\mu (I) = \mu (\m^r)$?
\item If $I$ is an $\m$-primary complete monomial ideal of order $r$ with only finitely 
many base points in its first neighborhood, is $\mu (I) = \mu (\m^r)$?
\item Do either of the two previous questions have an affirmative answer without 
the assumption that $I$ is a monomial ideal?
\end{enumerate}
\end{questions}

Without the assumption that $I$ has finitely many base points in its first neighborhood, 
it is easy to give examples in a $3$-dimensional regular local ring $R$ of complete 
monomial ideals 
of order $1$ that require an arbitrarily large minimal number of generators.  For example, 
for each positive integer $k$, 
the ideal $x R + (y, z)^{k} R$  is a complete monomial ideal that requires $k+2$ 
generators. 

In  Theorem~\ref{compat} we give necessary and sufficient conditions for the 
complete inverse transform of a $*$-product  of monomial ideals to be the $*$-product 
of the complete
inverse transforms of the factors. 

\begin{theorem}\label{compat} 
Assume the notation of Setting~\ref{Setting4}.
Let $I_1$ and  $ I'_1$ be complete $\m_1$-primary monomial ideals of $R_1$.
Let $y_1^{n_y}, \ldots, z_1^{n_z} \in \Delta (I_1)$  and  $y_1^{n'_y}, \ldots, z_1^{n'_z} \in \Delta (I'_1)$.  
Let 
$$
n ~: =  ~\max \{n_y, \ldots, n_z\} + ~\max \{n'_y, \ldots, n'_z\} ~ - ~\max \{n_y + n'_y, \ldots, n_z + n'_z\}.
$$
Then  $n$ is a nonnegative integer and we have 
$$
\CIT (I_1) * \CIT (I'_1)~ =   ~\m^{n} * \CIT (I_1 * I'_1).
$$
\end{theorem}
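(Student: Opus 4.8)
The plan is to show that each side equals $\m^{m}*\CIT(I_1*I'_1)$ for the \emph{same} integer $m$, and then to identify $m=n$ by a comparison of orders. First I would pin down the three relevant $\delta$-invariants. Lemma~\ref{5.14}(1) applied to $I_1$ and to $I'_1$ gives $\delta(I_1)=\max\{n_y,\dots,n_z\}$ and $\delta(I'_1)=\max\{n'_y,\dots,n'_z\}$, because the pure powers appearing in $\Delta(I_1)$ and $\Delta(I'_1)$ are precisely the least powers of the corresponding variables. For $I_1*I'_1$ one checks that the least power of a non-$x$ variable $w$ occurring in $I_1*I'_1$ is $n_w+n'_w$: it is at most this since $w_1^{n_w}w_1^{n'_w}\in I_1I'_1$, and at least this because reducing modulo the other $d-1$ variables sends $I_1$ and $I'_1$ to $(w_1^{n_w})$ and $(w_1^{n'_w})$ in the resulting discrete valuation ring, hence sends $I_1I'_1$ to $(w_1^{n_w+n'_w})$, while passing to the integral closure changes no power of a single variable by Remark~\ref{monoprop}.\ref{monoprop5} (applied in $R_1$, which is again as in Setting~\ref{Setting4}). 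Thus $\delta(I_1*I'_1)=\max\{n_y+n'_y,\dots,n_z+n'_z\}$ by Lemma~\ref{5.14}(1), so $n=\delta(I_1)+\delta(I'_1)-\delta(I_1*I'_1)\ge 0$.

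The technical core is a compatibility between transforms and $*$-products that I would establish next: for $\m$-primary complete ideals $A,B$ of $R$ and $S=R[\m/x]$, the complete transform of $A*B$ in $S$ is $\overline{A^{S}B^{S}}$. Since $S$ is a UFD and orders are additive in a regular local ring ($\gr_{\m}R$ being a domain), one has $AS=x^{\ord_{R}A}A^{S}$ and $BS=x^{\ord_{R}B}B^{S}$, and — because the generators of a transform have greatest common divisor a unit — transform is multiplicative on ordinary products, $(AB)^{S}=A^{S}B^{S}$. As $A*B=\overline{AB}$ has the same order as $AB$ and every element of $\overline{AB}$ is integral over $(AB)S$, one obtains $(AB)^{S}\subseteq(A*B)^{S}\subseteq\overline{(AB)^{S}}$, whence $\overline{(A*B)^{S}}=\overline{(AB)^{S}}=\overline{A^{S}B^{S}}$. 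Localizing at the maximal ideal of $S$ that produces $R_1$, and using that integral closure commutes with localization, this becomes
$$\overline{(A*B)^{R_1}}~=~\overline{A^{R_1}\,B^{R_1}}.$$

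I would then apply this identity with $A=\CIT(I_1)$ and $B=\CIT(I'_1)$. By Remark~\ref{compat2}(1) the transforms of these ideals in $R_1$ are $I_1$ and $I'_1$, which are complete, so the complete transform of $E:=\CIT(I_1)*\CIT(I'_1)$ in $R_1$ is $\overline{I_1I'_1}=I_1*I'_1$. The ideal $E$ is an $\m$-primary complete monomial ideal (a $*$-product of complete monomial ideals; monomiality by Remark~\ref{monoprop}), and it contains $y^{\delta(I_1)+\delta(I'_1)},\dots,z^{\delta(I_1)+\delta(I'_1)}$, the products of the pure-power generators, with exponent equal to $\ord_{R}E$; arguing as in Remark~\ref{compat2}(2) the transform of $E$ in each $R[\m/w]$ with $w\ne x$ is then the unit ideal, while its transform in $R[\m/x]$ contains pure powers of every variable other than $x$ and hence localizes to the unit ideal at every maximal ideal distinct from $\m_1$. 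So $R_1$ is the only base point of $E$ in the first neighborhood of $R$, and Remark~\ref{compat3} (Lemma~2.3 of \cite{L}) yields $E=\m^{m}*\CIT(E_1)$ for some integer $m\ge 0$ and some complete $\m_1$-primary monomial ideal $E_1$ of $R_1$. Taking the complete transform in $R_1$ of this last equation and using the displayed identity together with Remark~\ref{compat2}(1) gives $E_1=\overline{E^{R_1}}=I_1*I'_1$, so $E=\m^{m}*\CIT(I_1*I'_1)$.

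It remains to compute $m$. By Lemma~\ref{5.14}(2) and additivity of order under $*$-products, $\ord_{R}E=\ord_{R}\CIT(I_1)+\ord_{R}\CIT(I'_1)=\delta(I_1)+\delta(I'_1)$, while $\ord_{R}\bigl(\m^{m}*\CIT(I_1*I'_1)\bigr)=m+\delta(I_1*I'_1)$; comparing and invoking the first paragraph gives $m=\delta(I_1)+\delta(I'_1)-\delta(I_1*I'_1)=n$, so $\CIT(I_1)*\CIT(I'_1)=\m^{n}*\CIT(I_1*I'_1)$. The step I expect to be the main obstacle is the transform/$*$-product compatibility in the second paragraph: transforms of complete ideals need not be complete once $\dim R\ge 3$, so the identity must be phrased for complete transforms, and one must make sure that the passage from $S=R[\m/x]$ to the local ring $R_1$ — which is not flat over $R$ — is harmless, which it is, since multiplicativity of $\gcd$ in the UFD $S$ and the commutation of integral closure with localization do not require flatness. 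A lesser point is the base-point count in the third paragraph, which rests on $E$ carrying the top pure powers $y^{\ord_{R}E},\dots,z^{\ord_{R}E}$.
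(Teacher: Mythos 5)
Your proof is correct and rests on the same two pillars as the paper's: Lipman's Lemma~2.3 (via Remark~\ref{compat3}) to produce the factorization $\CIT(I_1)*\CIT(I'_1)=\m^m*\CIT(I_1*I'_1)$ with $m$ equal to the difference of orders, and Lemma~\ref{5.14} to convert those orders to the $\delta$-invariants. The difference is one of presentation, not method: the paper simply cites Lemma~2.3 of \cite{L} as already giving the displayed equality with $k=\ord_R(\CIT(I_1))+\ord_R(\CIT(I'_1))-\ord_R(\CIT(I_1*I'_1))$, whereas you reconstruct the needed consequence of that lemma from scratch, establishing along the way the complete-transform compatibility $\overline{(A*B)^{R_1}}=\overline{A^{R_1}B^{R_1}}$ and verifying that $R_1$ is the sole first-neighborhood base point of the $*$-product.
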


\begin{proof}
Lemma 2.3 of ~\cite{L}  implies that 
$$
\CIT (I_1) * \CIT (I'_1) ~ =  ~ \m^{k} * \CIT (I_1 * I'_1)
$$
where
$$ k~ = ~ \ord_{R} (\CIT (I_1)) + ~\ord_{R} (\CIT (I'_1)) -  ~\ord_{R} (\CIT (I_1 * I'_1))
$$
is  a nonnegative integer.  
 Lemma~\ref{5.14} implies that $k = n$.
\end{proof}

The following corollary is immediate.

\begin{corollary}
Assume the notation of Setting~\ref{Setting4}.
Let  $I_1$ be a complete $\m_1$-primary 
monomial ideal of $R_1$. With all products taken to be $*$-products, we have
\begin{enumerate}
\item For $k \ge 0$, $\CIT (I_1^k) = \CIT (I_1)^{k}$.
\item For $k \ge 0$, $\CIT (\m_1^k I_1) = \CIT (\m_1)^k \CIT (I_1)$.
\end{enumerate}
\end{corollary}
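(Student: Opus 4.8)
The plan is to deduce both identities from Theorem~\ref{compat} by checking that in each application the nonnegative integer $n$ produced there is actually $0$, so that the factor $\m^{n}$ is the unit ideal and drops out. Throughout, all powers and products are $*$-powers and $*$-products, and I use repeatedly that a $*$-power (resp. $*$-product) of complete $\m_1$-primary monomial ideals is again a complete $\m_1$-primary monomial ideal by Remark~\ref{monoprop}, together with Lemma~\ref{5.14}, which guarantees the existence of pure-power minimal generators and identifies $\delta$ as their maximal exponent.

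First I would prove item~(1) by induction on $k$; the case $k=0$ is the convention $\CIT(R_1)=R$ and $k=1$ is trivial. Suppose $k\ge 2$ and $\CIT(I_1^{k-1})=\CIT(I_1)^{k-1}$. Since $I_1^{k-1}*I_1=I_1^{k}$, I apply Theorem~\ref{compat} to the pair $I_1^{k-1},\,I_1$. Writing $y_1^{n_y},\ldots,z_1^{n_z}$ for the pure-power generators of $I_1$, the one genuine point is to identify the pure-power generators of $I_1^{k-1}$ as $y_1^{(k-1)n_y},\ldots,z_1^{(k-1)n_z}$: by Remark~\ref{monoprop}.\ref{monoprop5} a power $y_1^{m}$ lies in $I_1^{k-1}$ iff it lies in the ordinary $(k-1)$-st power of $I_1$, which forces $y_1^{m}$ to be a multiple of a product of $k-1$ pure-power-of-$y_1$ generators, the cheapest being $(y_1^{n_y})^{k-1}$. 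Hence, with $M:=\max\{n_y,\ldots,n_z\}$, the integer of Theorem~\ref{compat} is $n=(k-1)M+M-\max\{kn_y,\ldots,kn_z\}=(k-1)M+M-kM=0$, and Theorem~\ref{compat} gives $\CIT(I_1)^{k-1}*\CIT(I_1)=\CIT(I_1^{k-1})*\CIT(I_1)=\CIT(I_1^{k})$, which is item~(1).

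For item~(2) I would apply Theorem~\ref{compat} directly to the pair $\m_1^{k},\,I_1$, noting that $\m_1^{k}=\m_1^{*k}$ because every power of the maximal ideal of a regular local ring is complete. The pure-power generators of $\m_1^{k}$ are $x_1^{k},y_1^{k},\ldots,z_1^{k}$, all with the same exponent $k$, so $n=k+M-(k+M)=0$ and Theorem~\ref{compat} yields $\CIT(\m_1^{k}*I_1)=\CIT(\m_1^{k})*\CIT(I_1)$. Rewriting $\CIT(\m_1^{k})=\CIT(\m_1)^{k}$ via item~(1) applied to $\m_1$ gives item~(2).

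I do not anticipate a real obstacle—this is why the corollary is labeled immediate. The only step requiring any thought is the determination of the pure-power minimal generators of $I_1^{k-1}$ and of $\m_1^{k}$; once one knows that their exponent vectors are, respectively, a scalar multiple of $(n_y,\ldots,n_z)$ and constant, the vanishing of $n$ is exactly the statement that $\max$ distributes over the sum of two vectors attaining their maxima at a common coordinate, and the rest is a direct appeal to Theorem~\ref{compat}.
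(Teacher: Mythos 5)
Your proof is correct and spells out exactly the computation the paper has in mind when it labels the corollary ``immediate'' from Theorem~\ref{compat}: in both cases one verifies that the exponent $n$ there vanishes, via the observation (for item~1) that the smallest pure power of each variable $y_1,\ldots,z_1$ in $\overline{I_1^{k-1}}$ is the $(k-1)$-st power of the corresponding generator of $I_1$, and (for item~2) that adding a constant $k$ to each of $n_y,\ldots,n_z$ leaves $n = 0$. This is the same approach the paper intends, just written out in full.
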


\begin{example}\label{simple}
Assume the notation of Setting~\ref{Setting4} with $d = 3$.
Consider the ideals 
$$
I_1~ =  ~(x, y_1^2, z_1) \qquad  I'_1~ =  ~(x, y_1, z_1^2) \quad \text { and } \quad
I_1I_1' ~= ~  (x^2,  xy_1, xz_1,  y_1z_1, y_1^3, z_1^3)
$$
 of $R_1$.
Let $I = \CIT(I_1)$ and $I' =  \CIT (I'_1)$.   We have 
$$
I ~=  ~(x z, y^2, y z, z^2) +~ \m^3   \qquad \text{ and } \qquad   I'~ =  ~(x y, y^2, y z, z^2) + ~\m^3,
$$
while 
$$
K~ := ~ \CIT (I_1 I'_1) ~ =  ~  (x^5, x^3 y, x^3 z, x y z) + (y, z)^3.
$$
 Theorem~\ref{compat}  implies that  $I * I' = \m * K$.   The ideals $I$ and $I'$ 
are special $*$-simple complete
 ideals each having three base points,   the first two base points being $R$ and $R_1$. 
The third base point for $I$ and $I'$ are
 $$ 
 R_2 ~=   ~R_1[\frac{x}{y_1}, \frac{z_1}{y_1}]_{(y_1,  \frac{x}{y_1}, \frac{z_1}{y_1})}  \quad \text{ and } \quad  R_2'~ =  ~
 R_1[\frac{x}{z_1}, \frac{y_1}{z_1}]_{(z_1,  \frac{x}{z_1}, \frac{y_1}{z_1})} 
 $$
respectively. Since $\m$ is clearly special $*$-simple, the expression $\m * K = I * I'$ 
is the unique factorization of $K$
as a product of special $*$-simple ideals.
The ideal $K$   has four base points $R, R_1, R_2, R_2'$ and has
point basis $3,2,1,1$.   We prove that $K$ is $*$-simple.  The points $R_2$ and $R_2'$ 
are maximal base points of $K$.  
Assume that  $K = L *W$ is a   nontrivial  $*$-factorization.
Since $K$ is a complete inverse transform, neither $L$ nor $W$ is  a power of $\m$, 
so both $L$ and $W$ have $R_1$ as a base point.
We first show that  neither $L$ nor $W$ has  both  $R_2$ and $R_2'$  as base points.    
Assume  by way of contradiction that   $L$ has both $R_2$ and $R_2'$ as base points.  Then 
$W$ has neither $R_2$ nor $R_2'$ as  a base point.
We have  $ K^{R_1} = L^{R_1}*W^{R_1}$.  
Since $L^{R_1}$ has two maximal base points,    Fact~\ref{order1} implies that  
$L^{R_1}$ has order at least $2$. 
Thus $L^{R_1} = K^{R_1}$ and $W^{R_1} = R_1$, a contradiction to the assumption  
that $R_1$ is a base point of $W$.
Thus $K = L*W$ implies that each of $L$ and $W$ contains precisely one
of the two maximal base points $R_2$ and $R_2'$.
Hence the base points of $L$ and $W$ are linearly ordered.  Theorem~\ref{nonegativeexpo} 
implies that the factorizations of $L$ and $W$ as a $*$-product of special $*$-simple ideals 
involve no negative exponents. 
Since $I$ is the special $*$-simple ideal $P_{RR_2}$  and $I'$ is the special $*$-simple 
ideal $P_{RR_2'}$, 
Remark~\ref{factprop} implies that   $L \subseteq I$  and $W \subseteq I'$ and 
therefore that $K$ is contained in 
the $I * I'$.  This contradicts the fact that $K$ has order 3 and $I * I'$ has order 4.  
We conclude that  $K$ is $*$-simple.
\end{example}

\section{Sequences of local monomial quadratic transformations}  \label{cc5}

\begin{setting}\label{5.3}
Let $(R, \m)$ be an equicharacteristic  $d$-dimensional regular local ring  
and fix a regular system of parameters   $x, y, \ldots, z$   for  $R$.     
Let $n \geq 2$ be an integer, and let  
\begin{equation} \label{5.31}
R:=R_0~\subset~R_1~\subset~R_2~ \subset~\cdots~\subset~ R_{n-1}~\subset~R_n
\end{equation} 
be a {\bf sequence of  $d$-dimensional  local monomial quadratic transformations} in the sense that 
the fixed regular system of parameters for $\m_{i+1}$ is determined inductively from $\m_i$ in the following manner:

Fix a regular system of parameters $\m_i:=(x_i,~y_i,~\ldots,~z_i)R_i$ for $i$ with $0\leq i \leq n$ as defined inductively.
Then we say that $R_{i+1}$ is a {\bf local monomial quadratic transformation} of $R_i$ in the $x$-direction
if 
$$
R_{i+1}=R_i\big[\frac{\m_i}{x_i}\big]_{(x_i,~\frac{y_i}{x_i},~\ldots,~\frac{z_i}{x_i})},
\quad \m_{i+1}=(x_{i+1},~y_{i+1},~ \ldots, ~z_{i+1})R_{i+1},
$$
$$
\text{where}\quad x_{i+1}:=x_i,
\quad y_{i+1}:=\frac{y_i}{x_i},
\quad \ldots,
\quad z_{i+1}:=\frac{z_i}{x_i}.
$$
Similarly, 
we say that $R_{i+1}$ is a {\bf local monomial 
quadratic transformation } of $R_i$ in the $y$-direction, $\ldots$, or $z$-direction
if 
$$
R_{i+1}=R_i\big[\frac{\m_i}{y_i}\big]_{(\frac{x_i}{y_i},~y_i,~\ldots,~\frac{z_i}{y_i})},
\quad \ldots,
\quad \text{or}\quad
R_{i+1}=R_i\big[\frac{\m_i}{z_i}\big]_{(\frac{x_i}{z_i},~\frac{y_i}{z_i},~\ldots,~z_i)}.
$$
Let $V_i$ denote the  order valuation ring of $R_i$, for each $i$ with $0 \le i \le n$. 
For each integer $i$ with $1 \le i \le n$ and each $\m_i$-primary 
monomial ideal $I$,  let $\CIT(I)$ denote the complete inverse transform of $I$ in $R_{i-1}$. 
\end{setting}

\begin{remark}\label{monomial valuations}
Assuming notation as in Setting~\ref{5.3},    Proposition~\ref{contract7} implies that the $V_i$-ideals in $R$ are all monomial ideals.
\end{remark}

\begin{lemma}  \label{5.40}
Assume notation as in Setting~\ref{5.3}, and let $n = 2$.
Let $I_1, \ldots, I_m$ be $\m_2$-primary complete  monomial  ideals of $R_2$.   With all products  taken to be
 $*$-products,  for each  $k  \in \N_0$    we have
$$\CIT \left( \m_1^k \prod_{i=1}^{m} \CIT (I_i) \right) ~ =  ~\CIT (\m_1^k) \prod_{i=1}^{m} \CIT ( \CIT (I_i) )$$
\end{lemma}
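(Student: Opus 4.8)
The plan is to recognize the left side as a single complete inverse transform from $R_1$ down to $R_0$, expand the right side by iterating Theorem~\ref{compat} at the level $R_1\subset R_0$, and then show that the correcting power of $\m_0$ it produces is zero. Let $u$ be the direction variable of $R_1$ over $R_0$ and $w$ that of $R_2$ over $R_1$. Set $A_0:=\m_1^{k}$ and $A_i:=\CIT(I_i)$ for $1\le i\le m$; each $A_j$ is a complete $\m_1$-primary monomial ideal of $R_1$ (for the $A_i$, use Equation~\ref{5.11} and Remark~\ref{monoprop}), and $K:=A_0*A_1*\cdots*A_m$ is again such an ideal, so all the complete inverse transforms in the statement are defined and the assertion reads $\CIT(K)=\CIT(A_0)*\CIT(A_1)*\cdots*\CIT(A_m)$. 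Applying the two-ideal identity of Theorem~\ref{compat}, together with the $\ord_{R_0}$-expression for its exponent recorded in its proof (via Lemma 2.3 of \cite{L}), successively to the pairs $(A_0*\cdots*A_{j-1},\,A_j)$, and using $\m_0^{a}*\m_0^{b}=\m_0^{a+b}$, the accumulated exponent telescopes and gives
\[
\CIT(A_0)*\CIT(A_1)*\cdots*\CIT(A_m)~=~\m_0^{e}*\CIT(K),\qquad e=\sum_{j=0}^{m}\delta(A_j)-\delta(K)\ge0 ,
\]
where for a complete $\m_1$-primary monomial ideal $A$ of $R_1$ we write $\delta(A):=\ord_{R_0}(\CIT(A))$; by items~1 and~2 of Lemma~\ref{5.14}, $\delta(A)=\max_{v\ne u}n_v(A)$, the maximum over the variables $v\ne u$ of $R_1$ of the exponent $n_v(A)$ for which $v^{n_v(A)}\in\Delta(A)$. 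Thus the lemma reduces to proving $e=0$, i.e. $\delta(K)=\delta(\m_1^{k})+\sum_{i=1}^{m}\delta(\CIT(I_i))$.

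The first step is that $n_v$ is additive along $*$-products: for monomial $\m_1$-primary ideals $B,C$ of $R_1$ and any variable $v$, $n_v(B*C)=n_v(B)+n_v(C)$. Indeed $v^{n_v(B)+n_v(C)}=v^{n_v(B)}v^{n_v(C)}\in BC\subseteq B*C$; conversely $B*C=\overline{BC}$ is a monomial ideal, and by item~5 of Remark~\ref{monoprop} (applied in $R_1$) a power of $v$ lying in $\overline{BC}$ already lies in $BC$, hence is a multiple of a product of a monomial generator of $B$ with one of $C$ that is a pure power of $v$, so has exponent at least $n_v(B)+n_v(C)$. Iterating, and using $n_v(\m_1^{k})=k$, we get $n_v(K)=k+\sum_{i=1}^{m}n_v(\CIT(I_i))$ for every variable $v$; in particular $\delta(K)=k+\max_{v\ne u}\sum_{i=1}^{m}n_v(\CIT(I_i))$, and trivially $\delta(K)\le k+\sum_{i=1}^{m}\delta(\CIT(I_i))$.

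It remains to exhibit one variable $v^{*}\ne u$ of $R_1$ that realizes $\delta(\CIT(I_i))=\max_{v\ne u}n_v(\CIT(I_i))$ simultaneously for every $i$; then $\max_{v\ne u}\sum_i n_v(\CIT(I_i))=\sum_i n_{v^{*}}(\CIT(I_i))=\sum_i\delta(\CIT(I_i))$, so $\delta(K)=k+\sum_i\delta(\CIT(I_i))$, hence $e=0$ and the identity follows from the reduction above. This is the crux, and it rests on the explicit shape of a complete inverse transform given by Equation~\ref{5.11}. Write $r_i:=\ord_{R_1}(\CIT(I_i))$ and let $w^{n^{(i)}_w}$ be the minimal pure $w$-power generator of $I_i$. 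By Equation~\ref{5.11} the ideal $J_i$ with $\CIT(I_i)=\overline{J_i}$ has $v^{r_i}$ among its generators for every variable $v\ne w$, so $n_v(\CIT(I_i))\le r_i$, while $\ord_{R_1}(\CIT(I_i))=r_i$ forces $n_v(\CIT(I_i))=r_i$ for $v\ne w$; and item~3 of Lemma~\ref{5.14} gives $w^{r_i+n^{(i)}_w}\in\Delta(\CIT(I_i))$, whence $n_w(\CIT(I_i))=r_i+n^{(i)}_w\ge r_i$. Consequently, if $w\ne u$ we may take $v^{*}=w$ (then $n_w(\CIT(I_i))\ge n_v(\CIT(I_i))$ for all $v\ne u$ and $\delta(\CIT(I_i))=n_w(\CIT(I_i))$), and if $w=u$ then $n_v(\CIT(I_i))=r_i=\delta(\CIT(I_i))$ for every $v\ne u$, so any $v^{*}\ne u$ serves (there is one, since $d\ge3$). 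Either way such a uniform $v^{*}$ exists, completing the argument. The one genuine obstacle is precisely this last observation: the change-of-direction defect $n^{(i)}_w$ inflates only the single coordinate $n_w$, so that either there is no change of direction and all coordinates $n_v$ with $v\ne u$ coincide, or there is a change of direction and the maximizing coordinate is the fixed variable $w$ for every $i$ — a rigidity that fails for arbitrary monomial ideals and is exactly what forces $e=0$.
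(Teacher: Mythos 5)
Your proof is correct and follows the same route as the paper's: use Equation~\ref{5.11}, Remark~\ref{compat2}, and Proposition~\ref{eq9} to see that for each factor $\CIT(I_i)$ the pure-power exponent $n_v$ in every variable $v$ other than the direction $w$ of $R_1\subset R_2$ equals $\ord_{R_1}\CIT(I_i)$, and then deduce from Theorem~\ref{compat} that the correcting power of $\m_0$ is trivial. You supply the bookkeeping (additivity of $n_v$ along $*$-products, the telescoping of the exponent, and the case split on whether or not $w$ equals the direction $u$ of $R_0\subset R_1$) that the paper compresses into the single clause ``the assertion now follows from Theorem~\ref{compat}.''
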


\begin{proof}   
Without loss of generality, we may assume that the local monomial quadratic transformation from $R_1$ to $R_2$ is in the
$x$-direction.       If $\ord_{R_1} \CIT(I_i) = e_i$,  then Remark~\ref{compat2} 
and Proposition~\ref{eq9}   imply that $w_1^{e_i} $ is a minimal 
generator for $\CIT(I_i)$ for each of the elements $w_1$  other than $x_1$ in the fixed 
regular system of parameters for $R_1$.
The assertion now follows from  Theorem~\ref{compat}.
\end{proof}  

\begin{theorem}  \label{nonegativeexpo}
Assume notation  as in Setting~\ref{5.3} and let  $I $ be a complete $\m$-primary  
monomial ideal   of $R$.  
If the  base points  of $I$ are a subset of  $\{R_0, \ldots, R_n \}$,  then 
the  unique factorization  of $I$  as a product of special  $*$-simple ideals 
involves  no negative exponents.
\end{theorem}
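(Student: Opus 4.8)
The plan is to prove, by induction on the length $n$ of the chain, the following strengthening: if $R_0 \subset R_1 \subset \cdots \subset R_n$ ($n \ge 0$) is a sequence of local monomial quadratic transformations and $I$ is a complete $\m_0$-primary monomial ideal of $R_0$ whose base points lie in $\{R_0, \ldots, R_n\}$, then $I = \prod_{j=0}^{n} P_{R_0R_j}^{\,n_j}$ (all products $*$-products) for suitable nonnegative integers $n_j$. Because the $P_{R_0R_j}$ are special $*$-simple complete ideals, the uniqueness of Lipman's factorization (Remark~\ref{lipman1}) then identifies this expression as the factorization of $I$, so in particular it involves no negative exponents. For the base case, and more generally whenever $I$ has no base point in the first neighborhood of $R_0$, I would observe that $I$ is a power of $\m_0 = P_{R_0R_0}$: from the description of first-neighborhood base points of a monomial ideal following Definition~\ref{5.1} together with Remark~\ref{monoprop}, one gets $x^r, y^r, \ldots, z^r \in I$ with $r = \ord_{R_0}(I)$, whence $\m_0^r = \overline{(x^r, \ldots, z^r)} \subseteq I \subseteq \m_0^r$.

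For the inductive step, assume $R_1$ is a base point of $I$; it is then (by hypothesis, since $R_2, \ldots, R_n$ lie in higher neighborhoods) the \emph{only} base point of $I$ in the first neighborhood of $R_0$, so Remark~\ref{compat3} gives $I = \m_0^{\,k} * \CIT_{R_0}(I^{R_1})$ for some $k \ge 0$. The transform $I^{R_1}$ is a complete $\m_1$-primary monomial ideal of $R_1$ whose base points lie in $\{R_1, \ldots, R_n\}$, so the inductive hypothesis, applied to the shorter chain $R_1 \subset \cdots \subset R_n$, yields $I^{R_1} = \prod_{j=1}^{n} P_{R_1R_j}^{\,n_j}$ with all $n_j \ge 0$. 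What remains is to compute $\CIT_{R_0}(I^{R_1})$ and recognize it as a product of special $*$-simple ideals of $R_0$ with nonnegative exponents.

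The heart of the argument is the identity $\CIT_{R_i}(P_{R_{i+1}R_j}) = P_{R_iR_j}$ for $0 \le i < j \le n$, with the conventions $P_{R_{i+1}R_{i+1}} := \m_{i+1}$ and $\CIT_{R_i}(\m_{i+1}) = P_{R_iR_{i+1}}$; I would prove it by induction on $j - i$, establishing simultaneously that each $P_{R_iR_j}$ is a monomial ideal of $R_i$ (so that $\CIT_{R_i}$ may legitimately be applied). For $j = i+1$ the ideal $P_{R_iR_{i+1}}$ is visibly monomial (it is $(x_i^2, y_i, \ldots, z_i)R_i$ for the appropriate direction, cf. Fact~\ref{order1}); for $j \ge i+2$, once $P_{R_{i+1}R_j}$ is known to be monomial, the complete inverse transform $\CIT_{R_i}(P_{R_{i+1}R_j})$ is defined and monomial (Remark~\ref{monoprop}(\ref{monoprop4})). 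In either case the equality with $P_{R_iR_j}$ is obtained by checking that $P_{R_iR_j}$ has the three properties characterizing Lipman's inverse transform of $P_{R_{i+1}R_j}$ (Remark~\ref{compat2}): its transform at $R_{i+1}$ equals $P_{R_{i+1}R_j}$ (transform formula as in Remark~\ref{7.2}); its transform at every other point $A$ in the first neighborhood of $R_i$ is the unit ideal, since the unique path from $R_i$ to $R_j$ runs through $R_{i+1}$, so $A \not\prec R_j$ and Definition~\ref{2.13} applies; and $P_{R_iR_j}$ is not a $*$-multiple of $\m_i$, being $*$-simple and unequal to $\m_i$. Uniqueness of the inverse transform then forces $\CIT_{R_i}(P_{R_{i+1}R_j}) = P_{R_iR_j}$.

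Granting the identity, I would split off the $j = 1$ factor and rewrite $I^{R_1} = \m_1^{\,n_1} * \prod_{j=2}^{n} \CIT_{R_1}(P_{R_2R_j})^{\,n_j}$, which is precisely the form to which Lemma~\ref{5.40} applies, taken for the chain $R_0 \subset R_1 \subset R_2$ with the $\m_2$-primary complete monomial ideals $P_{R_2R_j}$. That lemma gives $\CIT_{R_0}(I^{R_1}) = \CIT_{R_0}(\m_1^{\,n_1}) * \prod_{j=2}^{n} \CIT_{R_0}(\CIT_{R_1}(P_{R_2R_j}))^{\,n_j}$, and applying the key identity twice to each factor, together with $\CIT_{R_0}(\m_1^{\,n_1}) = \CIT_{R_0}(\m_1)^{\,n_1} = P_{R_0R_1}^{\,n_1}$ (first equality from the corollary to Theorem~\ref{compat}, second from the key identity), collapses this to $\CIT_{R_0}(I^{R_1}) = \prod_{j=1}^{n} P_{R_0R_j}^{\,n_j}$. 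Hence $I = \m_0^{\,k} * \prod_{j=1}^{n} P_{R_0R_j}^{\,n_j} = \prod_{j=0}^{n} P_{R_0R_j}^{\,n_j}$ with $n_0 := k \ge 0$, which closes the induction. The main obstacle I anticipate is the identity $\CIT_{R_i}(P_{R_{i+1}R_j}) = P_{R_iR_j}$: its real content is that no spurious power of $\m_i$ intervenes when passing from $R_{i+1}$ down to $R_i$, and making this rigorous requires the uniqueness of Lipman's inverse transform together with the bookkeeping that keeps every $P_{R_iR_j}$ monomial, so that both $\CIT$ and Lemma~\ref{5.40} are applicable at each stage.
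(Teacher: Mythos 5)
Your proposal is correct and follows essentially the same path as the paper's proof: induction on $n$, transform to $R_1$, apply the induction hypothesis, and descend by complete inverse transform using Lemma~\ref{5.40}. The only stylistic difference is that you isolate and prove the identity $\CIT_{R_i}(P_{R_{i+1}R_j}) = P_{R_iR_j}$ explicitly (via the characterization of the inverse transform in Remark~\ref{compat2}), whereas the paper leaves this implicit in the application of Lemma~\ref{5.40}; making it explicit is a small but genuine improvement in clarity.
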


\begin{proof}
We  use  induction  on $n$.
If  $n = 0$, the only base point of $I$ is $R$, so $I$ is a power of $\m$ and the assertion holds.

Assume $n > 0$ and  the assertion  is true for $n - 1$.
Let $J_1 = I^{R_1}$ be the complete transform of $I$ in $R_1$, and let $J := \CIT (J_1)$.
 Lipman's unique factorization theorem implies that  there exists  
a unique set of integers $k_i$  such that  
$$
\left( \prod_{k_i < 0} P_{R_0 R_i}^{- k_i} \right) I ~=  ~\prod_{k_i > 0} P_{R_0 R_i}^{k_i},
$$ 
where all products are $*$-products. Taking complete transform to $R_1$ of the 
ideals on both sides of this equation, we obtain
$$  \left( \prod_{i \ge 1, k_i < 0} P_{R_1 R_i}^{- k_i} \right) J_1~ = ~ \prod_{i \ge 1, k_i > 0} P_{R_1 R_i}^{k_i}.
$$
By the induction hypothesis  $k_i \ge 0$     for $1 \le i \le n$.   Hence
$$
J_1 = \prod_{i \ge 1, k_i > 0} P_{R_1 R_i}^{k_i}.
$$
Taking complete inverse transform and using  Lemma~\ref{5.40} gives
$$
J ~ =   ~ \prod_{i=1}^{n} P_{R_0 R_i}^{k_i}.
$$
Since $J^{R_1} = I^{R_1}$ and $R_1$ is the only base point of $I$ or $J$ in 
the first neighborhood of $R$,  Lemma~2.3 of \cite{L}  implies that  
there exists an integer $n \ge 0$ 
such that $I = \m^{n} * J$.    It follows that  $n = k_0$    and  $I$ 
has no negative exponents in its unique factorization 
as a product of special  $*$-simple ideals.
\end{proof}

\begin{corollary}  \label{simpleisspecial}
Assume  notation as in Setting~\ref{5.3}.   If  $I$ is an $\m$-primary   
finitely supported $*$-simple complete 
 monomial ideal and the base points of $I$ are linearly ordered, 
then  $I$ is a special $*$-simple ideal.
\end{corollary}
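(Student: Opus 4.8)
The plan is to reduce the statement to Theorem~\ref{nonegativeexpo} together with Lipman's unique factorization theorem recalled in Remark~\ref{lipman1}. First I would observe that the hypothesis forces the base points of $I$ to be a chain that can be realized as a sequence of local monomial quadratic transformations in the strict sense of Setting~\ref{5.3}. Since $I$ is finitely supported, $\mathcal{BP}(I)$ is a finite rooted tree with root $R_0 = R$ (Remark~\ref{factprop}); being linearly ordered it is a chain $R = R_0 \prec R_1 \prec \cdots \prec R_n$ with unique maximal element $R_n$, and every base point of $I$ lies on this chain. If $n = 0$ then $I$ is a power of $\m$, and $*$-simplicity gives $I = \m = P_{R_0R_0}$, which is special $*$-simple; so assume $n \ge 1$. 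By the discussion following Definition~\ref{5.1}, the unique base point $R_1$ of $I$ in the first neighborhood of $R$ is one of the $d$ local monomial quadratic transformations of $R$, and the transform $I^{R_1}$ is again a monomial ideal of $R_1$ with respect to the induced regular system of parameters. Since transforms are transitive, the base points of $I^{R_1}$ are precisely the base points of $I$ strictly above $R_1$, and these are again linearly ordered; iterating shows that each $R_{i+1}$ is a local monomial quadratic transform of $R_i$ in one of the coordinate directions and each transform $I^{R_i}$ is monomial. Thus $R = R_0 \subset R_1 \subset \cdots \subset R_n$ is a sequence of local monomial quadratic transformations as in Setting~\ref{5.3}, and the base points of $I$ are a subset of $\{R_0, \ldots, R_n\}$.

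With this in hand, Theorem~\ref{nonegativeexpo} applies and shows that the unique factorization of $I$ as a $*$-product of special $*$-simple ideals involves no negative exponents. In the notation of Equation~\ref{es30} the product on the left is then trivial, so $I = \prod_{\gamma} P_{R_0\gamma}^{\,n_\gamma}$ as a $*$-product with all $n_\gamma \ge 0$ and only finitely many nonzero; in particular $n_{R_n} \ge 1$ by Remark~\ref{factprop}. Since $I \ne R$ is $*$-simple and the $*$-product of two proper $\m$-primary ideals is again a proper ideal, this expression cannot be a nontrivial $*$-product; hence exactly one exponent $n_\gamma$ is nonzero and it equals $1$. Therefore $I = P_{R_0\gamma}$ for a single $\gamma \succ R_0$ with $\dim \gamma = \dim R_0$, which is by Definition~\ref{2.13} a special $*$-simple complete ideal.

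The main obstacle is the first step: making precise that a finitely supported complete monomial ideal with linearly ordered base points fits the rigid bookkeeping of Setting~\ref{5.3}, i.e.\ that at each stage the unique further base point is one of the $d$ monomial quadratic transforms and that the successive transforms $I^{R_i}$ remain monomial in the inductively chosen coordinates. Once that is established, the conclusion is essentially immediate from Theorem~\ref{nonegativeexpo} and the definition of $*$-simple.
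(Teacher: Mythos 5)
Your proposal is correct and follows essentially the same route as the paper: reduce to Setting~\ref{5.3} (using that a monomial ideal has its first-neighborhood base points among the $d$ monomial quadratic transforms, and that transforms of monomial ideals stay monomial), invoke Theorem~\ref{nonegativeexpo} to rule out negative exponents, and then use $*$-simplicity to force a single factor $P_{R_0\gamma}$. The paper's published proof is just a terse version of this, so your filled-in details match its intent exactly.
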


\begin{proof}  We may assume the base points of $I$ are $\{R_0, \ldots, R_n\}$ 
as in Setting~\ref{5.3}, and apply Theorem~\ref{nonegativeexpo}.
\end{proof}

\begin{theorem}\label{5.41}
Assume notation  as in Setting~\ref{5.3}, and let $n = 2$.
Let $I_2 \subset R_2$ be an $\m_2$-primary complete monomial ideal,
and let $I_1 = \CIT (I_2)$ in $R_1$ and $I_0 = \CIT (I_1)$ in $R_0$.
The following are equivalent:
\begin{enumerate}
\item There is a change in direction from $R_0$ to $R_2$.
\item  $\mu(I_0) > \mu(I_1)$.
\item $\ord_{R_0} (I_0) = s$, where $s = \max \{ \ord_{R_1} (\alpha) \vert \alpha \in \Delta (I_1) \}$.
\item $\ord_{R_0} (I_0) > \ord_{R_1} (I_1)$.
\item $\ord_{R_0}$ is a Rees valuation of $I_0$.
\end{enumerate}
\end{theorem}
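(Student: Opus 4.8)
The plan is to read off the equivalence from Theorem~\ref{shapeofideals}, applied to the inverse transform $I_0 = \CIT(I_1)$, combined with an analysis of the two possible directions of the transform $R_1 \subset R_2$. Normalize so that $R_0 \subset R_1$ is in the $x$-direction, and write $\delta_i := \delta(I_i)$, so that Lemma~\ref{5.14} gives $\ord_{R_0}(I_0) = \delta_1$ and $\ord_{R_1}(I_1) = \delta_2$. The direction-free input is this: Theorem~\ref{shapeofideals} applied to $(I_1,I_0)$ yields $\mu(I_0)=\mu(I_1) \iff \delta_1=\delta_2$ and $\delta_1=\delta_2 \implies \ord_{R_0}\notin \Rees I_0$; since $\mu(I_0) \ge \mu(I_1)$ (Lemma~\ref{mingens}) and $\delta_1 = \ord_{R_0}(I_0) \ge \ord_{R_1}(I_0^{R_1}) = \ord_{R_1}(I_1) = \delta_2$ (Remark~\ref{7.4r}, with $I_0^{R_1}=I_1$ by Remark~\ref{compat2}), it follows that statements (2) and (4) are both equivalent to $\delta_1 > \delta_2$, and that (5) implies them. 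Moreover, comparing exponents one has $\ord_{R_1}(\beta) \ge -\nu_x(\beta)$ for every monomial $\beta$, hence $s \ge \delta_1 = \ord_{R_0}(I_0)$ always. Finally, by Remark~\ref{7.12} applied to $R_0 \subset R_1 \subset R_2$ together with a short computation in the monomial parameters, there is a change of direction from $R_0$ to $R_2$ exactly when $R_1 \subset R_2$ is taken in a direction $\ne x$; so it suffices to treat these two mutually exclusive, exhaustive cases.

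Suppose first that $R_1 \subset R_2$ is taken in the $x$-direction, so that (1) fails. Here $I_1 = x^{\delta_2}I_2 \cap R_1$ and $R_2$ is the unique base point of $I_1$ in its first neighborhood (Remark~\ref{compat2}). Let $x_2^{p_x}, y_2^{p_y}, \dots, z_2^{p_z}$ be the pure-power generators of $I_2$ (Lemma~\ref{5.14}), so $\delta_2 = \max(p_y,\dots,p_z)$. For each non-$x$ variable $v$ one has $v_2^{\delta_2} \in I_2$ (as $p_v \le \delta_2$), hence $v_1^{\delta_2} \in \Delta(I_1)$ (using $\ord_{R_1}(I_1)=\delta_2$), so $\delta_1 \ge \delta_2$; on the other hand $(y_1,\dots,z_1)^{\delta_2} \subseteq I_1$ by Remark~\ref{compat3}, so an $\alpha \in \Delta(I_1)$ whose non-$x_1$ exponents summed past $\delta_2$ would be a proper multiple of a member of $(y_1,\dots,z_1)^{\delta_2} \subseteq I_1$; hence $-\nu_x(\alpha) \le \delta_2$ for all $\alpha \in \Delta(I_1)$, giving $\delta_1 \le \delta_2$. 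Thus $\delta_1 = \delta_2$, which falsifies (4), and hence (2) and (5) by the first paragraph. And (3) fails because $x^{p_x+\delta_2} \in \Delta(I_1)$ by Lemma~\ref{5.14}(3), with $p_x \ge 1$, so $s \ge p_x+\delta_2 > \delta_2 = \ord_{R_0}(I_0)$.

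Suppose instead that $R_1 \subset R_2$ is taken in a direction other than $x$, say the $y$-direction (since $d \ge 3$, fix a further non-$x$ variable $z$, with $z_2 = z_1/y_1$); then (1) holds and $y_1 = y_2$. Lemma~\ref{5.14}(3) now gives that the pure power of $y_1$ in $\Delta(I_1)$ is $\delta_2 + p_y$, where $y_2^{p_y} \in \Delta(I_2)$ and $p_y \ge 1$ (as $I_2 \ne R_2$); hence $\delta_1 \ge -\nu_x(y_1^{\delta_2+p_y}) = \delta_2 + p_y > \delta_2$, which is (4), and hence (2). For (5): since $\delta_2 \ge p_z$ we have $z_2^{\delta_2} \in I_2$, so $z_1^{\delta_2} \in \Delta(I_1)$ (using $I_1 = y_1^{\delta_2}I_2 \cap R_1$ and $\ord_{R_1}(I_1)=\delta_2$), a generator of $I_1$ lying in the variables $y_1,\dots,z_1$ of order $\delta_2 \ne \delta_1$, so $\ord_{R_0}$ is a Rees valuation of $I_0$ by Theorem~\ref{shapeofideals} $(3)\iff(4)$. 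For (3): by Lemma~\ref{mingens}(1),(2) applied to the transform $I_2 \to I_1$, every $\alpha \in \Delta(I_1)$ is obtained from $y_1^{\delta_2}\gamma$, for some $\gamma = x_2^{b}y_2^{a}z_2^{c}\cdots \in \Delta(I_2)$, by replacing a power of $y_1$ with a monomial of equal degree in the other variables; a direct computation then gives $\ord_{R_1}(\alpha) = \delta_2 + a$, and since $a \le p_y$ this forces $\ord_{R_1}(\alpha) \le \delta_2+p_y \le \delta_1$ for every generator, so $s \le \delta_1$; with the reverse inequality from the first paragraph, $s = \delta_1 = \ord_{R_0}(I_0)$, which is (3).

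I expect the main obstacle to be statement (3) in the change-of-direction case. It forces one to transport Lemma~\ref{mingens}, stated for the transform $R \to {}^xR$, to the transform $R_1 \subset R_2$ in the $y$-direction, keeping track of the relabeling $y_1 = y_2$ and $v_1 = v_2 y_1$ for the remaining variables, and then to read off from the resulting description of $\Delta(I_1)$ the identity $\ord_{R_1}(\alpha) = \delta_2 + (\text{exponent of } y_2 \text{ in the source generator})$. Once this and the dichotomy are established, the equivalence of (1)--(5) follows: exactly one of the two cases occurs, and in it all five statements carry the same truth value.
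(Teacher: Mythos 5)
Your proposal is correct and follows essentially the same strategy as the paper: a case split driven by change of direction (equivalently, whether $R_1 \subset R_2$ is taken in the same direction $x$ as $R_0 \subset R_1$), with Lemma~\ref{5.14}, Lemma~\ref{mingens}, and Theorem~\ref{shapeofideals} doing the technical work in each case. The only cosmetic difference is the choice of normalization (you fix $R_0 \subset R_1$ in the $x$-direction and vary the $R_1 \subset R_2$ step, while the paper fixes $R_1 \subset R_2$ in the $x$-direction and varies $R_0 \subset R_1$), plus your useful preliminary step of observing that (2) and (4) are each equivalent to $\delta_1 > \delta_2$ and that (5) implies them.
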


\begin{proof}
The proof proceeds as follows.
We  show that if there is a change in direction 
from $R_0$ to $R_2$, then items~2, 3, 4 and 5 hold, and 
if there is not a change in direction from $R_0$ to $R_2$, 
then  items~2, 3, 4  and 5 do not hold.
Thus, items~2, 3, 4 and 5 are all equivalent to item~1. We may assume 
without loss of generality that $R_1 \subset R_2$ is in the $x$-direction.

Let $x_2^{n_x}, y_2^{n_y}, \ldots, z_2^{n_z} \in \Delta (I_2)$, and let $r = \delta (I_2) = \max\{n_y, \ldots, n_z\}$.
Lemma~\ref{5.14} and Proposition~\ref{eq9} imply that 
 $x_1^{n_x + r}, y_1^{r}, \ldots, z_1^{r} \in \Delta (I_1)$ and $\ord_{R_1} (I_1) = r$.
Thus we have  $s = n_x + r > \ord_{R_1} (I_1)$.

Assume  there is no change in direction from $R_0$ to $R_2$.
That is, $R_0 \subset R_1$ is in the $x$-direction.
By Lemma~\ref{5.14}, $r = \delta (I_1)$, and $\ord_{R_0} (I_0) = \ord_{R_1} (I_1)$.
 Theorem~\ref{shapeofideals} implies that $\mu(I_0) = \mu(I_1)$ 
and  $V_0 \notin \Rees (I_0)$.
Thus items~2, 3, 4 and 5 do not hold.

Assume there is a change in direction from $R_0$ to $R_2$.
Without loss of generality, we may assume that  $R_0 \subset R_1$ is in the $y$-direction.
 Lemma~\ref{5.14}  implies that  $\delta (I_1) = n_x + r$, 
and $\ord_{R} (I_0) = n_x + r$.
Thus items~3 and 4 hold. 
Theorem~\ref{shapeofideals} implies that  $\mu(I_0) > \mu(I_1)$, and 
since $z^{r} \in \Delta (I_1)$ and $r < n_x + r = \delta (I_1)$, 
Theorem~\ref{shapeofideals}  also implies  that   
$V_0 \in \Rees (I_0)$.  Thus  items~2 and 5 hold.
\end{proof}

\begin{remark} \label{5.41r}
The integer  $s$ of item~2 of Theorem~\ref{5.41} 
is  the smallest integer $s$ such that $\m_1^s \subset   I_1$. It is
also equal to $\max\{a, b, \ldots, c\}$, where 
$x_1^{a}, y_1^{b}, \ldots, z_1^{c} \in \Delta (I_1)$.

Let $I$ be a complete $\m$-primary monomial ideal that has at most one base point in the first neighborhood of $R$.
By Remark~\ref{compat3}, this assumption on $I$ is equivalent to the assumption that $I$ is either  a power of $\m$, 
or $I$ is a power of $\m$ times $\CIT (I_1)$, 
where $I_1$ is an $\m_1$-primary  monomial  ideal of     the unique base point $R_1$  of $I$ in the first neighborhood of $R$.
Let $x^a, y^b, \ldots, z^c \in \Delta (I)$.
We associate with $I$ a pair of integers, $r = \min (a, b, \ldots, c)$ and $s = \max (a, b, \ldots, c)$.
With our assumptions on $I$, it follows that $r = \ord_{R} (I)$ and $\m^{s} \subseteq  I$.
That is, $\m^r \supseteq  I  \supseteq  \m^s$, and $r$ is the maximum integer and $s$ is the minimum integer such that these inclusions hold.
We call the integer $s$ the {\bf index} of $I$.
Even in the case where the $\m$-primary complete ideal $I$ is not a monomial ideal, we refer to the smallest integer $s$ such 
that $\m^s \subseteq  I$ as the {\bf index} of $I$.

Theorem~\ref{5.41} yields a description of how these invariants behave with respect to complete inverse transform.
In particular, let $I_1, I_0$ be as Theorem~\ref{5.41}, and let $s$ and $r$ be the index and order of $I_1$, respectively.
 If there is no change in direction from $R_0$ to $R_2$,   then the index and order of $I_0$ are $s + r$ and $r$, respectively.  
 If  there is a change in direction from $R_0$ to $R_2$,     then the index and order of $I_0$ are $s + r$ and $s$, respectively.    
\end{remark}

\begin{theorem}\label{5.43}
Assume notation  as in Setting~\ref{5.3}, and let $n = 3$.
Let $I_3 \subset R_3$ be an $\m_3$-primary complete monomial ideal,
and  define  $I_i = \CIT (I_{i+1})$ in $R_i$  for  $0 \le i \le 2$.
The following are equivalent:
\begin{enumerate}
\item There is a change in direction from $R_0$ to $R_2$ and a change 
in direction from $R_1$ to $R_3$.
\item  $\mu(I_0) > \mu(I_1) > \mu(I_2)$. 
\item $\ord_{R_0} (I_0) =  \ord_{R_1}(I_1) + \ord_{R_2}(I_2)$.
\item $\ord_{R_0}$   and $\ord_{R_1}$ are both  Rees valuations   of $I_0$.
\end{enumerate}
\end{theorem}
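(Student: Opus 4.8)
The plan is to reduce everything to two applications of Theorem~\ref{5.41}: one to the length-two subsequence $R_1 \subset R_2 \subset R_3$ together with the complete $\m_3$-primary monomial ideal $I_3$ (so that the ``$I_1$'' and ``$I_0$'' of that theorem become $I_2 = \CIT(I_3)$ and $I_1 = \CIT(I_2)$), and one to $R_0 \subset R_1 \subset R_2$ together with the complete $\m_2$-primary monomial ideal $I_2$ (so that its ``$I_1$'' and ``$I_0$'' become $I_1$ and $I_0$). The first application shows that the four statements ``there is a change in direction from $R_1$ to $R_3$'', ``$\mu(I_1) > \mu(I_2)$'', ``$\ord_{R_1}(I_1) > \ord_{R_2}(I_2)$'', and ``$V_1 := \ord_{R_1}$ is a Rees valuation of $I_1$'' are equivalent; the second shows the analogous equivalence with indices shifted down by one: ``change in direction from $R_0$ to $R_2$'', ``$\mu(I_0) > \mu(I_1)$'', ``$\ord_{R_0}(I_0) > \ord_{R_1}(I_1)$'', ``$V_0 := \ord_{R_0}$ is a Rees valuation of $I_0$''. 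Now item~(1) of the theorem is precisely the conjunction of the two change-of-direction statements, and item~(2) (recall that $\mu(I_0) \ge \mu(I_1) \ge \mu(I_2)$ always, by Lemma~\ref{mingens}) is the conjunction of the two $\mu$-inequalities, so $(1) \Leftrightarrow (2)$ is immediate.

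For $(1) \Leftrightarrow (4)$, note that $V_0 \in \Rees_{R_0} I_0$ is already one of the conditions in the second list. For the other half of item~(4), namely $V_1 \in \Rees_{R_0} I_0$, I would use that $I_1 = I_0^{R_1}$ is the transform in $R_1$ of $I_0 = \CIT(I_1)$ (Remark~\ref{compat2}), that $I_1 \neq R_1$, and that $V_1$ birationally dominates $R_1$; Proposition~\ref{reestrans} then gives $V_1 \in \Rees_{R_0} I_0 \iff V_1 \in \Rees_{R_1} I_1$, and the right-hand side is one of the conditions in the first list. Hence item~(4) is equivalent to the conjunction of the two change-of-direction conditions, i.e.\ to item~(1).

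The equivalence $(1) \Leftrightarrow (3)$ requires tracking how the order and the index of a complete inverse transform change under a quadratic transform, which is exactly the content of Remark~\ref{5.41r}. Write $r_j = \ord_{R_j}(I_j)$ and let $s_j$ denote the index of $I_j$. Applying Remark~\ref{5.41r} to the two subsequences above yields: the index of $I_1$ equals $s_2 + r_2$ in every case; $r_1 = r_2$ if there is no change in direction from $R_1$ to $R_3$ and $r_1 = s_2$ if there is; and $r_0 = r_1$ if there is no change in direction from $R_0$ to $R_2$, while $r_0 = s_2 + r_2$ if there is. The one non-formal ingredient is the strict inequality $s_2 > r_2$: since $I_2 = \CIT(I_3)$, taking the transform from $R_2$ to $R_3$ to be (say) in the $x$-direction and writing $x_3^{\,n_x} \in \Delta(I_3)$ (with $n_x \ge 1$, as $I_3 \subseteq \m_3$), Lemma~\ref{5.14} gives $r_2 = \delta(I_3)$ and $x_2^{\,n_x + r_2} \in \Delta(I_2)$, while $y_2^{\,r_2}, \ldots, z_2^{\,r_2}$ are the remaining pure-power generators of $I_2$, so $s_2 = n_x + r_2 > r_2$. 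Now if item~(1) holds, then $r_0 = s_2 + r_2$ and $r_1 = s_2$, so $r_0 = r_1 + r_2$, which is item~(3). Conversely, if item~(1) fails there are two cases: either there is no change in direction from $R_0$ to $R_2$, so $r_0 = r_1$ and item~(3) would force $r_2 = 0$, impossible since $I_2$ is $\m_2$-primary; or there is a change from $R_0$ to $R_2$ but not from $R_1$ to $R_3$, so $r_0 = s_2 + r_2$ and $r_1 = r_2$, and item~(3) would force $s_2 = r_2$, contradicting $s_2 > r_2$. Hence $(3) \Leftrightarrow (1)$, completing the argument.

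The main obstacle I expect is organizational rather than conceptual: one must carefully align the hypotheses and conclusions of Theorem~\ref{5.41} and Remark~\ref{5.41r} under the two index shifts and keep straight which invariant (order versus index) is being transported at each stage. The single point needing a small separate computation is the strict inequality $s_2 > r_2$, which is what excludes the ``change of direction only between $R_0$ and $R_2$'' case in the direction $(3) \Rightarrow (1)$.
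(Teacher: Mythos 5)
Your proposal is correct and follows essentially the same route as the paper. You reduce $(1)\Leftrightarrow(2)$ to two index-shifted applications of Theorem~\ref{5.41} (using Lemma~\ref{mingens} for the automatic inequality $\mu(I_0)\ge\mu(I_1)\ge\mu(I_2)$), handle $(1)\Leftrightarrow(4)$ by combining one half from Theorem~\ref{5.41} with Proposition~\ref{reestrans} applied to $I_0^{R_1}=I_1$, and settle $(1)\Leftrightarrow(3)$ by tracking the order/index pair $(s,r)$ through Remark~\ref{5.41r} and invoking the strict inequality $s_2>r_2$ — this is precisely what the paper's diagram encodes, and your explicit derivation of $s_2 = n_x + r_2 > r_2$ from Lemma~\ref{5.14} matches the paper's observation.
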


\begin{proof} 
Applying Theorem~\ref{5.41}, it is straightforward that items 1 and 2 are equivalent.
By Proposition~\ref{reestrans},
$\ord_{R_1} \in \Rees_{R_0} (I_0)$ if and only if $\ord_{R_1} \in \Rees_{R_1} (I_1)$.
Thus it is clear that items 1 and 4 are equivalent.
It remains to show that item 1 is equivalent to item 3.

We may assume without loss of generality that $R_2 \subset R_3$ is in the $x$-direction. 
Let $x^{a_3}, y^{b_3}, \ldots, z^{c_3} \in \Delta(I_3)$ and let 
$r_2 = \delta(I_3) = \max\{b_3, \ldots,  c_3\}$. 
Lemma~\ref{5.14} and Proposition~\ref{eq9} imply that 
 $x_2^{a_3 + r_2}, y_1^{r_2}, \ldots,  z_1^{r_2} \in \Delta (I_2)$ and $\ord_{R_2} (I_2) = r_2$.

We compute the index and order of $I_1$, and $I_0$ as in Remark~\ref{5.41r}.
Let $s$ and $r$ be the index and order of $I_2$, respectively, and notice that $s > r$.
We have the following diagram.
In the diagram, the index and order of $I_i$ are on the $i$-th level.
Going down to the left from level $i$ to level $i - 1$ indicates a change in direction from $R_{i-1}$ to $R_{i+1}$, whereas going down to the right indicates no change in direction.

$$\begin{tikzpicture}[level distance=1.5cm,
level 1/.style={sibling distance=6cm},
level 2/.style={sibling distance=3cm}]
\node (Root) {$(s, r)$}
child {
	node {$(s + r, s)$}
	child {
		node {$(2 s + r, s + r)$}
	}
	child {
		node {$(2 s + r, s)$}
	}
}
child {
	node {$(s + r, r)$}
	child {
		node {$(s + 2 r, s + r)$}
	}
	child {
		node {$(s + 2 r, r)$}
	}
};
\node [right of=Root, node distance=7cm] (Level2) {$(2)$}
child { node {$(1)$} edge from parent[draw=none]
	child { node{$(0)$} edge from parent[draw=none]}
};
\end{tikzpicture}$$

Since $s > r$,  it follows from this diagram that $\ord_{R_0} (I_0) = \ord_{R_1} (I_1) + \ord_{R_2} (I_2)$ if and only  if item~1 holds. 
\end{proof} 

Theorem~\ref{5.43}  directly  implies the following more general result that we state as Corollary~\ref{5.431}.  

\begin{corollary}  \label{5.431}
Assume notation as in Setting~\ref{5.3}, with $n \ge 3$.
Let $I_n \subset R_n$ be an $\m_n$-primary complete monomial ideal.
Define $I_i = \CIT (I_{i+1})$ in $R_i$ for each $i$ with  $0 \le i \le n - 1$.
The following are equivalent:
\begin{enumerate}
\item There is a change in direction from $R_i$ to $R_{i+2}$ for  each $i$ with $0 \le i \le n - 2$.
\item  $\mu (I_0) > \mu (I_1) > \ldots > \mu (I_{n-1})$.
\item  $\ord_{R_i} (I_i) = \ord_{R_{i+1}} (I_{i+1}) + \ord_{R_{i+2}} (I_{i+2})$ for each  $i$ with $0 \le i \le n - 3$.
\item  $\ord_{R_i}$ is a Rees valuation of $I_0$ for each $i$ with  $0 \le i \le n - 2$.
\end{enumerate}
\end{corollary}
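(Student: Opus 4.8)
The plan is to deduce Corollary~\ref{5.431} by applying Theorems~\ref{5.41} and~\ref{5.43} to the overlapping ``windows'' $R_i \subset R_{i+1} \subset \cdots \subset R_{i+k}$ of the sequence in Setting~\ref{5.3}. Every such window is again a sequence of local monomial quadratic transformations as in Setting~\ref{5.3} once $R_i$ is regarded as the base ring, and the given complete inverse transforms are compatible with passing to a window: since $I_j = \CIT(I_{j+1})$ for all $j$, Remark~\ref{compat2} gives $I_j^{R_{j+1}} = I_{j+1}$, and transitivity of the transform then yields $I_0^{R_i} = I_i$ for every $i$. Thus inside any window the ideals $I_i, I_{i+1}, \ldots$ are related exactly as the hypotheses of Theorems~\ref{5.41} and~\ref{5.43} require.

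For $(1)\Leftrightarrow(2)$ and $(1)\Leftrightarrow(4)$, fix $i$ with $0 \le i \le n-2$ and apply Theorem~\ref{5.41} to the window $R_i \subset R_{i+1} \subset R_{i+2}$, with $I_{i+2}$, $I_{i+1} = \CIT(I_{i+2})$, $I_i = \CIT(I_{i+1})$. The equivalence $(1)\Leftrightarrow(2)$ there says there is a change in direction from $R_i$ to $R_{i+2}$ if and only if $\mu(I_i) > \mu(I_{i+1})$; conjoining over $0 \le i \le n-2$ gives $(1)\Leftrightarrow(2)$ of the corollary. The equivalence $(1)\Leftrightarrow(5)$ of Theorem~\ref{5.41} says there is a change in direction from $R_i$ to $R_{i+2}$ if and only if $\ord_{R_i} \in \Rees_{R_i}(I_i)$; since $I_0^{R_i} = I_i \ne R_i$ and $\ord_{R_i}$ is a DVR birationally dominating $R_i$, Proposition~\ref{reestrans} identifies this with $\ord_{R_i} \in \Rees_{R_0}(I_0)$. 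Conjoining over $0 \le i \le n-2$ yields $(1)\Leftrightarrow(4)$.

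For $(1)\Leftrightarrow(3)$, fix $i$ with $0 \le i \le n-3$ and apply Theorem~\ref{5.43} to the window $R_i \subset R_{i+1} \subset R_{i+2} \subset R_{i+3}$, with $I_{i+3}$ and $I_j = \CIT(I_{j+1})$. Its equivalence $(1)\Leftrightarrow(3)$ says there is a change in direction from $R_i$ to $R_{i+2}$ and from $R_{i+1}$ to $R_{i+3}$ if and only if $\ord_{R_i}(I_i) = \ord_{R_{i+1}}(I_{i+1}) + \ord_{R_{i+2}}(I_{i+2})$. If item~(1) of the corollary holds, then for each such $i$ both changes of direction occur (as $i \le n-2$ and $i+1 \le n-2$), so item~(3) holds. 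Conversely, if item~(3) holds, then for each $i$ with $0 \le i \le n-3$ both changes of direction hold; the first of these, over $i = 0, \ldots, n-3$, gives a change in direction from $R_j$ to $R_{j+2}$ for $j = 0, \ldots, n-3$, while the second gives it for $j = 1, \ldots, n-2$, and together these cover every $j$ with $0 \le j \le n-2$, which is item~(1).

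The argument uses no ingredient beyond Theorems~\ref{5.41} and~\ref{5.43} and Proposition~\ref{reestrans}; the only point requiring care is the index bookkeeping — in particular, verifying that the windows used for item~(3) overlap so that the two change-of-direction conditions appearing in Theorem~\ref{5.43} for consecutive windows together account for exactly the range $0 \le i \le n-2$ in item~(1), and that the ranges $0 \le i \le n-2$ (for items~(1), (2), (4)) and $0 \le i \le n-3$ (for item~(3)) are consistent given $n \ge 3$.
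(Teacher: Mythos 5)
Your proof is correct and takes essentially the same route the paper intends: the paper states only that Theorem~\ref{5.43} ``directly implies'' Corollary~\ref{5.431}, and your argument makes that deduction explicit by sliding the hypotheses of Theorems~\ref{5.41} and~\ref{5.43} across overlapping windows $R_i \subset R_{i+1} \subset R_{i+2}$ (resp.\ $R_{i+3}$), using $I_0^{R_i} = I_i$ and Proposition~\ref{reestrans} to match Rees-valuation conditions at different levels, and then checking that the index ranges stitch together to give exactly the range $0 \le i \le n-2$ in item~(1). The bookkeeping is right and no ingredient beyond what you cite is required.
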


\begin{remark}  \label{gcd one}
We describe all possible ordered pairs $(s, r)$ such that $s$ is the index and $r$ is the order of a special $*$-simple monomial ideal.
Assume the notation of Setting~\ref{5.3}.
The index and order of $P_{R_n R_n} = \m_n$ and $P_{R_{n-1} R_n}$ are $(1, 1)$ and $(2, 1)$, respectively.
For $i$ with $0 \le i \le n - 2$, if the index and order of $P_{R_{i+1} R_n}$ is $(s, r)$, then Theorem~\ref{5.43} implies the index and order of $P_{R_i R_n}$ is $(s + r, s)$ if there is a change of direction from $R_i$ to $R_{i+2}$ and $(s + r, r)$ if there is no change of direction from $R_i$ to $R_{i+2}$.
Diagram~\ref{MonomialTree} illustrates the first few levels of an infinite tree that describes this behavior, where the left path indicates a change of direction and the right path indicates no change of direction.

\begin{equation}\label{MonomialTree}\begin{tikzpicture}[level distance=1.5cm,
level 2/.style={sibling distance=6cm},
level 3/.style={sibling distance=3cm},
level 4/.style={sibling distance=1.5cm}]
\node (Root) {$(1, 1)$}
child {
	node {$(2, 1)$}
	child {
		node {$(3, 2)$}
		child {
			node {$(5, 3)$}
			child { node {$(8, 5)$} } 
			child { node {$(8, 3)$} }
		}
		child { 
			node {$(5, 2)$}
			child { node {$(7, 5)$} } 
			child { node {$(7, 2)$} }
		}
	}
	child {
		node {$(3, 1)$}
		child { 
			node {$(4, 3)$}
			child { node {$(7, 4)$} } 
			child { node {$(7, 3)$} }
		}
		child {
			node {$(4, 1)$}
			child { node {$(5, 4)$} } 
			child { node {$(5, 1)$} }
		}
	}
};
\end{tikzpicture}\end{equation}

For a vertex $(s, r)$ at a given level past the first (that is, $s \ge 2$), there are precisely two vertices at the next level adjacent to $(s, r)$, namely $(s + r, s)$ and $(s + r, r)$.
Since $\gcd (s, r) = 1$ implies that $\gcd (s + r, r) = 1$ and $\gcd (s + r, s) = 1$, and $\gcd (1, 1) = 1$,
every  ordered pair   $(s, r)$  of positive integers   that may be    realized as the index and order of 
a special $*$-simple monomial ideal     satisfies   the properties:   (i)  $s \ge r$,  and  (ii) $\gcd (s, r) = 1$.
We show in Theorem~\ref{invariants} that every pair $(s, r)$ satisfying (i) and (ii) 
is realized as the index and order of a special $*$-simple monomial ideal and 
observe uniqueness properties of this realization.
\end{remark}

\begin{proposition}\label{invariants}
Let $(s, r)$ be an ordered pair of positive integers such that $s \ge r$ and $gcd (s, r) = 1$.
Then $(s, r)$ occurs exactly once in the tree described in Diagram~\ref{MonomialTree}.
\end{proposition}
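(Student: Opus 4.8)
The plan is to identify the tree of Diagram~\ref{MonomialTree} with the tree produced by the ``subtractive'' descent on coprime pairs (a Stern--Brocot-type structure), and to read off both existence and uniqueness from the observation that every non-root vertex has a uniquely determined parent \emph{label}. First I would record the parent rule. Suppose a vertex of the tree carries the label $(S,R)$. Every label has coprime coordinates with first coordinate $\ge$ second, and the root is $(1,1)$, so a non-root vertex has $S>R\ge 1$; since such a vertex arises as a child of a vertex labeled $(a,b)$ in one of the two ways $(S,R)=(a+b,a)$ or $(S,R)=(a+b,b)$, the first possibility forces $(a,b)=(R,\,S-R)$ (hence $R\ge S-R$) and the second forces $(a,b)=(S-R,\,R)$ (hence $S-R\ge R$). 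In every case the parent label is
$$
(S,R)^{-}~:=~\bigl(\max(R,\,S-R),\ \min(R,\,S-R)\bigr),
$$
and at least one of the two cases always applies. I would then check the elementary facts that $(S,R)^{-}$ is again a pair of coprime positive integers with first coordinate $\ge$ second, that its coordinate-sum is $R+(S-R)=S$, strictly smaller than $S+R$, and that $(S,R)^{-}=(1,1)$ happens exactly when $S=2R$, which by coprimality means $(S,R)=(2,1)$.

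With this in hand I would prove by induction on the coordinate-sum $s+r$ that a pair $(s,r)$ with $s\ge r\ge 1$ and $\gcd(s,r)=1$ labels exactly one vertex. The base cases are $s+r=2$ and $s+r=3$, i.e.\ $(1,1)$ and $(2,1)$: the pair $(1,1)$ labels only the root, since every other label has coordinate-sum $\ge 3$; the pair $(2,1)$ labels only the unique child of the root, since any other occurrence would have to be a child of a coordinate-sum-$2$ vertex (only the root, whose sole child is $(2,1)$) or a strictly deeper descendant, of coordinate-sum $\ge 4$. For the inductive step, take $s+r=N\ge 4$; then $s>r\ge 1$ and $(s,r)\notin\{(1,1),(2,1)\}$, so $(s,r)^{-}$ is a valid pair of coordinate-sum $s<N$ whose first coordinate is $\ge 2$. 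By the inductive hypothesis $(s,r)^{-}$ labels a unique vertex $v_0$, which therefore has exactly two children; a short computation with the formula for $(\cdot)^{-}$ shows that these two children carry the \emph{distinct} labels $(s,r)$ and $(s,\,s-r)$ (distinct because $r=s-r$ would force the excluded equality $s=2r$). In particular $(s,r)$ occurs. Conversely, any vertex labeled $(s,r)$ is not the root, hence a child of some vertex whose label is forced to be $(s,r)^{-}$; so that vertex is $v_0$, and ours is the unique child of $v_0$ bearing the label $(s,r)$. Thus $(s,r)$ occurs exactly once, which completes the induction and the proof.

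If one prefers a self-contained existence argument, iterating $(\cdot)^{-}$ from $(s,r)$ yields a strictly decreasing sequence of coordinate-sums that stays $\ge 2$, hence reaches $(1,1)$ after finitely many steps, and reversing this chain --- checking at each step through the displayed formula that the next pair is one of the two children of the current one --- exhibits an explicit path from the root down to a vertex labeled $(s,r)$.

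The only point that needs care is the interplay between ``a label determines its parent's label'' and the induction: to obtain uniqueness of the \emph{vertex} one must propagate uniqueness of the vertex through the induction, not merely uniqueness of the parent label, and one must separately dispatch the single degenerate case $S=2R$, i.e.\ the node $(2,1)$, where the otherwise-bifurcating branching collapses because the root has only one child. Beyond this, the argument is a routine verification with the formula for $(\cdot)^{-}$, and I do not anticipate any genuine difficulty.
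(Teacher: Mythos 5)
Your argument is correct and is essentially the paper's: both proofs rest on the same key observation that any non-root label $(S,R)$ determines its parent's label uniquely (the paper splits into the cases $s-r>r$ and $s-r<r$, which you package into the single formula $(\max(R,S-R),\min(R,S-R))$), and both then induct downward along this subtractive descent. The only surface difference is the induction variable --- you use the coordinate sum $s+r$ where the paper uses $s$ --- and this does not change the substance of the argument.
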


\begin{proof}
We use induction on the positive integer $s$.
The cases where $s \le 2$ are clear.

Assume $s > 2$ and that for all positive integers $s' < s$ and all ordered pairs $(s', r')$ 
that satisfy $s' \ge r'$  and  $\gcd (s', r') = 1$, the assertions of 
Proposition~\ref{invariants}.

Let $r$ be a positive integer with $s > r$ and $\gcd (s, r) = 1$.
Either we have $s - r > r$ or $s - r < r$.

\textbf{Case 1:}
Assume that $s - r > r$.
Consider the pair $(s - r, r)$.
By the induction hypothesis, the pair $(s - r, r)$ occurs exactly once in Diagram~\ref{MonomialTree}.
Passing one step down in the diagram from $(s - r, r)$ to the right gives $(s, r)$.

Suppose that $(s, r)$ occurs as the child node of some $(s', r')$.
Thus $s' + r' = s$, and either $s' = r$ or $r' = r$.
If $s' = r$, then $r' = s - r > r = s'$, which contradicts the fact that $s' \ge r'$, so it must be the case that $r' = r$.
Thus $(s', r') = (s - r, r)$.

\textbf{Case 2:}
Assume that $s - r < r$.
Similarly to the previous case, the pair $(r, s - r)$ occurs exactly once, and $(s, r)$ is obtained by passing down one step to the left.

Suppose that $(s, r)$ occurs as the child node of some $(s', r')$.
As before, $s' + r' = s$, and either $s' = r$ or $r' = r$.
If $r' = r$, then $s' = s - r < r = r'$, which contradicts the fact that $s' \ge r'$, so $s' = r$.
Thus $(s', r') = (r, s - r)$.
\end{proof}

We record in Corollaries~\ref{5.5} and \ref{5.6} implications of   Theorem~ \ref{5.43} for special $*$-simple monomial ideals.  

\begin{corollary}\label{5.5}
Assume notation  as in Setting~\ref{5.3}, and fix  $i$ with $0 \leq i \leq n-3$.
If there are two change of directions from $R_i$ to $R_{i+3}$,
then we have 
$$
\ord_{R_i}(P_{R_iR_n})=\ord_{R_{i+1}}(P_{R_{i+1}R_n})+\ord_{R_{i+2}}(P_{R_{i+2}R_n}).
$$
\end{corollary}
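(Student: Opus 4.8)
The plan is to obtain the identity as a direct consequence of Theorem~\ref{5.43}, applied to the length-three initial segment $R_i \subset R_{i+1} \subset R_{i+2} \subset R_{i+3}$ of the sequence of Setting~\ref{5.3} (which is itself an instance of Setting~\ref{5.3} with ``$n$'' replaced by $3$). The one ingredient not contained verbatim in Theorem~\ref{5.43} is the identification of the special $*$-simple ideals along the chain as iterated complete inverse transforms: namely, that for $i \le j \le n-1$ the ideal $P_{R_jR_n}$ is a complete $\m_j$-primary monomial ideal of $R_j$ and $P_{R_jR_n} = \CIT(P_{R_{j+1}R_n})$. Once this is in place the corollary is immediate.

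To prove the identification I would argue by downward induction on $j$, the base case being $P_{R_nR_n} = \m_n$, which is a complete $\m_n$-primary monomial ideal. Assume $P_{R_{j+1}R_n}$ is a complete $\m_{j+1}$-primary monomial ideal. Then $\CIT(P_{R_{j+1}R_n})$ is a complete $\m_j$-primary monomial ideal of $R_j$ by Remark~\ref{monoprop}.\ref{monoprop4} and Lemma~\ref{5.14}, and by Remark~\ref{compat2} it is the inverse transform of $P_{R_{j+1}R_n}$ in the sense of Lemma~2.3 of~\cite{L}. On the other hand $P_{R_jR_n}$ also has these defining properties: its transform in $R_{j+1}$ is $P_{R_{j+1}R_n}$ by Corollary~2.2 of~\cite{L}; its base point set is $\{R_j,\ldots,R_n\}$, so $R_{j+1}$ is its only base point in the first neighborhood of $R_j$ and hence its transform in each of the other first-neighborhood affine components is the unit ideal; and it is not a $*$-multiple of $\m_j$, since $P_{R_jR_n}$ is $*$-simple and $P_{R_jR_n} \neq \m_j$ for $j < n$. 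By the uniqueness of the inverse transform in Lemma~2.3 of~\cite{L} we conclude $P_{R_jR_n} = \CIT(P_{R_{j+1}R_n})$, which is monomial, completing the induction. Equivalently, once $P_{R_jR_n}$ is known to be monomial, Remark~\ref{compat3} gives $P_{R_jR_n} = \m_j^{k}*\CIT(P_{R_{j+1}R_n})$ with $k \ge 0$, and $*$-simplicity of $P_{R_jR_n}$ together with $P_{R_jR_n}\neq\m_j$ forces $k = 0$.

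Finally I would apply Theorem~\ref{5.43} to the segment $R_i \subset R_{i+1} \subset R_{i+2} \subset R_{i+3}$ with $I_3 := P_{R_{i+3}R_n}$; this is legitimate since $i+3 \le n$ (as $i \le n-3$) and, by the previous step, $I_3$ is a complete $\m_{i+3}$-primary monomial ideal with $\CIT(I_3) = P_{R_{i+2}R_n} =: I_2$, $\CIT(I_2) = P_{R_{i+1}R_n} =: I_1$, and $\CIT(I_1) = P_{R_iR_n} =: I_0$. By Definition~\ref{7.11d}, ``two changes of direction from $R_i$ to $R_{i+3}$'' says precisely that there is a change of direction from $R_i$ to $R_{i+2}$ and a change of direction from $R_{i+1}$ to $R_{i+3}$, which is item~(1) of Theorem~\ref{5.43} for this segment; hence item~(3) holds, i.e. $\ord_{R_i}(I_0) = \ord_{R_{i+1}}(I_1) + \ord_{R_{i+2}}(I_2)$, which is the asserted equality for the $P$'s. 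The only nonroutine point is the identification $P_{R_jR_n} = \CIT(P_{R_{j+1}R_n})$ of the second paragraph---getting past the potential $\m_j$-power factor and the a priori monomiality of $P_{R_jR_n}$---and it may well be that this has already been recorded earlier in the paper, in which case the proof collapses to a single appeal to Theorem~\ref{5.43}.
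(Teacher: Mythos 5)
Your proposal is correct, and it follows the route the paper intends: the paper itself states Corollary~\ref{5.5} with no displayed proof, flagging it (together with Corollary~\ref{5.6}) as a direct consequence of Theorem~\ref{5.43}. The one genuine gap you correctly identify --- that the chain of special $*$-simple ideals $P_{R_jR_n}$ is a chain of iterated complete inverse transforms, i.e.\ $P_{R_jR_n}=\CIT(P_{R_{j+1}R_n})$, so that Theorem~\ref{5.43} applies with $I_3=P_{R_{i+3}R_n}$ and $I_0,I_1,I_2$ the corresponding $P$'s --- is left tacit in the paper (it is similarly used without comment in Remark~\ref{gcd one} and Corollary~\ref{5.6}). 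Your downward induction using the uniqueness in Lipman's Lemma~2.3, together with Remark~\ref{compat2}'s identification of $\CIT$ with Lipman's inverse transform, is exactly the right way to close that gap: the base case $P_{R_nR_n}=\m_n$ is monomial, the inductive step produces $\CIT(P_{R_{j+1}R_n})$ as a monomial ideal satisfying Lipman's three characterizing properties, and uniqueness forces equality with $P_{R_jR_n}$; the $*$-simplicity argument correctly rules out an extraneous $\m_j$-power. (Your first version of the inductive step, via Lemma~2.3 uniqueness, is the one to keep --- the ``equivalently'' via Remark~\ref{compat3} presupposes monomiality of $P_{R_jR_n}$, as you note.) So this is the same argument the paper has in mind, written out with the bookkeeping the paper leaves implicit.
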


\begin{corollary}\label{5.6}
Assume notation  as in Setting~\ref{5.3},  and let $I := P_{R_0R_n}$ 
denote the special $*$-simple complete ideal associated to the
sequence of 
local monomial quadratic transformations of Equation~\ref{5.31}. 
Let $r_i:=\ord_{R_i}(P_{R_iR_n})$ for  each  $i$ with $0 \leq i \leq n$.   If 
for each $i$ with $0 \le i \le n-2$ 
there is  a
change of direction between $R_i$ and $R_{i+2}$,  then
\begin{enumerate}
\item
$V_i\in \Rees(P_{R_iR_n})$ for all $i=0,1,2,\ldots, n-2$.
\item
$\Rees(I)=\{V_0,~V_1,~ V_2,~ \ldots, V_{n-2},~V_n\}$.
\item
$\mathcal B(I)=\{r_0, r_1, r_2, r_3, \ldots, 13,8,5,3,2,1,1\}$ is a Fibonacci sequence.
\item
The sequence $(s_0, s_1, \ldots, 13, 8, 5, 3, 2, 1)$, where $s_i$ is the index of $P_{R_iR_n}$, is a shift of a Fibonacci sequence.
\end{enumerate}
\end{corollary}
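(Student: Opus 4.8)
The plan is to derive all four statements by applying Theorem~\ref{5.41} and Remark~\ref{5.41r} to each consecutive triple $R_i \subset R_{i+1} \subset R_{i+2}$ in the sequence of Equation~\ref{5.31}, together with Remark~\ref{7.2} and Remark~\ref{7.1.5} on the Rees valuations of a special $*$-simple complete ideal and Remark~\ref{gcd one} for the initial values of the order and index. The enabling observation is that, setting $I_n := \m_n$ and $I_i := \CIT(I_{i+1})$ in $R_i$ for $0 \le i \le n-1$, one has $I_i = P_{R_iR_n}$ for every $i$: for $i < n$ the ideal $P_{R_iR_n}$ is $*$-simple with $P_{R_iR_n} \neq \m_i$, its only base point in the first neighborhood of $R_i$ is $R_{i+1}$, and $(P_{R_iR_n})^{R_{i+1}} = P_{R_{i+1}R_n}$ by \cite[Corollary~2.2]{L}, so \cite[Lemma~2.3]{L} forces $P_{R_iR_n} = \CIT(P_{R_{i+1}R_n})$, and downward induction from $P_{R_nR_n} = \m_n$ gives the claim. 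By Remark~\ref{monoprop}.\ref{monoprop4} each $I_i$ is a complete monomial ideal, and since a local monomial quadratic transformation preserves the residue field, $R_0/\m_0 = R_n/\m_n$. Hence for $0 \le i \le n-2$, Theorem~\ref{5.41} and Remark~\ref{5.41r} apply directly to the length-two subsequence $R_i \subset R_{i+1} \subset R_{i+2}$ with top monomial ideal $I_{i+2}$, the change of direction from $R_i$ to $R_{i+2}$ being the standing hypothesis.

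For part~(1): for each $i$ with $0 \le i \le n-2$, the implication $(1)\Rightarrow(5)$ of Theorem~\ref{5.41}, applied to the triple $R_i \subset R_{i+1} \subset R_{i+2}$, gives $V_i = \ord_{R_i} \in \Rees(P_{R_iR_n})$. For part~(2): Remark~\ref{7.2} gives $\Rees_{R_i}(P_{R_iR_n}) \subseteq \Rees(I)$ for every $i$, so part~(1) yields $V_i \in \Rees(I)$ for $0 \le i \le n-2$, and $V_n = \ord_{R_n} \in \Rees(I)$ by Remark~\ref{7.1.5}(2); conversely, since $R_0/\m_0 = R_n/\m_n$, Remark~\ref{7.1.5} gives $\Rees(I) \subseteq \{\ord_{R_n}\} \cup \{\ord_{R_i}\}_{i=0}^{n-2} = \{V_0,\ldots,V_{n-2},V_n\}$. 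The two inclusions together prove~(2).

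For parts~(3) and (4): by Remark~\ref{5.41r}, the change of direction from $R_i$ to $R_{i+2}$ forces the index $s_i$ and order $r_i$ of $P_{R_iR_n}$ to satisfy $r_i = s_{i+1}$ and $s_i = s_{i+1}+r_{i+1}$ for $0 \le i \le n-2$; combining successive instances via $r_{i+1} = s_{i+2}$ gives $s_i = s_{i+1}+s_{i+2}$ and hence $r_i = r_{i+1}+r_{i+2}$ for all $0 \le i \le n-2$. Starting from the base values $(s_n,r_n)=(1,1)$ and $(s_{n-1},r_{n-1})=(2,1)$ of Remark~\ref{gcd one}, the pair $(s_i,r_i)$ thus follows the leftmost branch of Diagram~\ref{MonomialTree}, namely $(1,1),(2,1),(3,2),(5,3),(8,5),(13,8),\ldots$. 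Reading off the orders gives $(r_n,r_{n-1},r_{n-2},\ldots)=(1,1,2,3,5,8,13,\ldots)$; since the base points of $I=P_{R_0R_n}$ are exactly $R_0,\ldots,R_n$ with $(P_{R_0R_n})^{R_i}=P_{R_iR_n}$, the point basis $\mathcal B(I)=\{r_0,\ldots,r_n\}$ is the Fibonacci sequence of part~(3). Reading off the indices gives $(s_n,s_{n-1},\ldots)=(1,2,3,5,8,13,\ldots)$, the shifted Fibonacci sequence of part~(4).

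The main obstacle is the reduction in the first paragraph: verifying that iterating $\CIT$ from $\m_n$ reproduces the special $*$-simple ideals $P_{R_iR_n}$ as complete monomial ideals, so that the conclusions of Theorem~\ref{5.41} and Remark~\ref{5.41r} are genuinely statements about the $P_{R_iR_n}$, and pinning down the base case $(s_{n-1},r_{n-1})=(2,1)$, i.e.\ that $P_{R_{n-1}R_n}=(x^2,y,\ldots,z)R_{n-1}$. After that, (1)--(4) are routine applications of the quoted results together with a Fibonacci bookkeeping.
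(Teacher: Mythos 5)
The paper gives no separate proof of Corollary~\ref{5.6}; it states only that 5.5 and 5.6 ``record implications of Theorem~\ref{5.43}.'' Your argument fills that in correctly and along the lines one would expect. The key reduction --- that $I_i := \CIT(I_{i+1})$ starting from $I_n=\m_n$ reproduces $P_{R_iR_n}$ via Lipman's transform/inverse-transform lemmas, so that each $P_{R_iR_n}$ is a complete monomial ideal and the machinery of Theorem~\ref{5.41}, Remark~\ref{5.41r}, and Remark~\ref{gcd one} applies to the consecutive triples --- is precisely what underlies Theorem~\ref{5.43} and Corollary~\ref{5.431}, which the paper would have invoked directly. You could shorten the argument by citing Corollary~\ref{5.431} (its item~3 is exactly your Fibonacci recursion for orders, and its item~4 gives $V_i\in\Rees I_0$ for $0\le i\le n-2$, which already covers the forward inclusion in your part~(2)); instead you reprove these via Theorem~\ref{5.41}, which is slightly more work but equivalent. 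The only other point worth flagging is that in establishing $P_{R_iR_n}=\CIT(P_{R_{i+1}R_n})$ you need both the monomiality of $P_{R_{i+1}R_n}$ (to make $\CIT$ available and to apply Remark~\ref{compat2}/\ref{compat3}) and the $*$-simplicity of $P_{R_iR_n}$ (to kill the $\m_i$-power in Lemma~2.3 of~\cite{L}); your downward induction supplies the first and Definition~\ref{2.13} the second, so the reduction is sound. The reverse inclusion $\Rees(I)\subseteq\{V_0,\ldots,V_{n-2},V_n\}$ correctly uses $R_0/\m_0=R_n/\m_n$ via Remark~\ref{7.1.5}(3)(a), which holds because monomial quadratic transforms are residually trivial. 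In short, a correct proof that is essentially the paper's intended route, unrolled one level further.
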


 Example~\ref{7.9} describes the structure 
of the special $*$-simple complete ideal $P_{R_0R_4}$ in the 
case where there is a change of direction 
from $R_0$ to $R_2$  and from $R_2$ to $R_4$, but 
there is no change of direction from $R_1$ to $R_3$.

\begin{example}\label{7.9}
Let the notation be as in  Setting~\ref{5.3} with $d = 3$ , $\m = (x,y,z)R$ and  $n = 4$.  Assume 
that  the local quadratic transforms are:
$$
R  ~:=R_0  ~\subset~  {^xR_1} ~\subset~ {^{yx}R_2} ~\subset ~{^{yyx}R_3}~\subset~ {^{zyyx}R_4}. 
$$
defined by
$$
\begin{aligned}
& S_1  :=  ~R[\frac{\m}{x}],~~ N_1 := ~ (x,~ \frac{y}{x},~ \frac{z}{x})S_1, ~~R_1 := ~ (S_1)_{N_1},~~ \m_1 := ~ N_1R_1:=(x_1,~y_1,~z_1)R_1. \\
& S_{2} := ~ R_1[\frac{\m_1}{y_1}],~~ N_{2} := ~(\frac{x_1}{y_1},~ y_{1},~ \frac{z_{1}}{y_{1}})S_{2},
           ~~ R_{2} := ~ (S_{2})_{N_{2}},~~\m_{2} := ~ N_{2}R_{2}:=(x_{2},~y_{2}, ~z_{2})R_{2}.\\
& S_{3} := ~ R_2[\frac{\m_2}{y_2}],~~ N_{3} := ~(\frac{x_2}{y_2},~ y_{2},~ \frac{z_{2}}{y_{2}})S_{3},~~
            R_{3} := ~ (S_{3})_{N_{3}},~~ \m_{3} := ~ N_{3}R_{3}:=(x_{3},~y_{3}, ~z_{3})R_{3}.\\
& S_{4} := ~ R_3[\frac{\m_3}{z_3}],~~ N_{4} ~:= ~(\frac{x_3}{z_3},~ \frac{y_3}{z_3},~ z_{3})S_{4},~~ 
            R_{4} := ~ (S_{4})_{N_{4}},~~\m_{4} := ~ N_{4}R_{4}:=(x_{4},~y_{4}, ~z_{4})R_{4}.\\
\end{aligned}
$$

The sequence of special $*$-simple ideals is:
\begin{align*}
P_{R_4 R_4} ~ =&   ~(x_4,~  y_4, ~ z_4). \\
P_{R_3 R_4}  ~=&  ~(x_3, ~ y_3,  ~z_3^2). \\
P_{R_2 R_4} ~=& ~ (x_2^2,~ x_2 z_2, ~ z_2^2, ~ x_2 y_2,  ~y_2^2 z_2, ~ y_2^3). \\
P_{R_1 R_4~} =& ~ (x_1^2, ~ x_1 z_1, ~ z_1^2,  ~x_1 y_1^2,  ~y_1^3 z_1, ~ y_1^5).
\end{align*}

Then:
\begin{enumerate}
\item
Let $v_i:=\ord_{R_i}$ for each $0 \leq i\leq 4$. Then we have
\[
\begin{array}{c|c|c|c }
                     & x & y & z         \\ \hline
v_4:=\ord_{R_4}      & 6   & 8    & 11 \\ \hline
v_3:=\ord_{R_3}      &3     &4     &6   \\ \hline
v_2:=\ord_{R_2}      &2   &3  &4         \\ \hline
v_1:=\ord_{R_1}        &1  &2  &2       \\ \hline
v_0:=\ord_{R_0}       &1  &1  &1                      
\end{array}
\]
\item
The special $*$-simple complete $\m$-primary ideal $P_{R_0R_4}$ is  the ideal  $K$, where
$$
\aligned
K:&= \{\alpha \in \m~\vert~ v_4(\alpha) \geq 40=v_4(y^5)~~ \text{and}~~v_0(\alpha) \geq 5\}\\
   &= (y^5,~x^4y^2,~x^3z^2,~xy^3z,~x^5z,~x^7,~x^2yz^2,~x^2z^3,~xy^2z^2,~xyz^3,~xz^4,~\\
   &\qquad y^4z,~y^3z^2,~y^2z^3,~yz^4,~z^5,~x^4yz,~x^3y^3,~x^3y^2z,~x^2y^4,~x^6y)R.
\endaligned
$$

\item
$\mathcal BP(P_{R_0R_4})=\{R_0, R_1, R_2, R_3, R_4\}.$
\item
$\mathcal B(P_{R_0R_4})=\{5,2,2,1,1\}.$
\item
$(s_0, s_1, s_2, s_3, s_4) = (7, 5, 3, 2, 1)$, where $s_i$ is the index of $P_{R_i R_4}$.
\item
The set of Rees valuations of $P_{R_0R_4}$ is $\{\ord_{R_0},~\ord_{R_2},~\ord_{R_4}\}$.
\end{enumerate}
\end{example}

\begin{proof}
Item~1 is clear. 
Item~2 follows from Remark~\ref{monomial valuations} and Lemma~\ref{mingens}.
Item~3  is clear. 
Theorem~\ref{5.41} implies items 4, 5, and 6.
\end{proof}

\begin{remark}
In Example~\ref{7.9}, the ideal  $K = P_{R_0R_4}$ has three Rees valuations 
with   one of these Rees
valuations,  $\ord_{R_2}$,   redundant in the representation of $K$ as the intersection 
of valuation ideals corresponding to its Rees valuations.
\end{remark}

\begin{question}    \label{5.11q}    With $(R,\m)$ as in Definition~\ref{5.1},  let $I$ be a finitely supported monomial $\m$-primary ideal.
\begin{enumerate}
\item If $I$ is complete, does it follow that $\m I$ is complete?  
\item If $I$ is contracted from $\Proj R [\m t]$, does it follow that $\m I$ is   contracted   from $\Proj R [\m t]$?
\end{enumerate}
\end{question}

\begin{remark}  \label{5.11r}    If the ideal  $I$ 
in Question~\ref{5.11q}  has only one base point in the first neighborhood of $R$, and 
if $I$ is contracted from $\Proj R [\m t]$ ,   
then  $\m I$ is contracted from $\Proj R [\m t]$.   If, for example,   
the first neighborhood of $R$
is in the  $x$-direction, then $I$ is contracted from $S = R [\frac{\m}{x}]$.   
Hence  for $f \in R$ if  $x f \in I$  then also  
$y f \in I$, $\ldots $,  $z f \in I$.  
This  same condition holds for $\m I$, so $\m I$ is  contracted from $S$. 
Since $\m I$ has the same base points in the first 
neighborhood of $R$ as $I$,  it follows that $\m I$ is contracted from $\Proj R[\m t]$.
A simple induction argument implies that for every positive integer $n$ the 
ideal  $\m^n I$ is contracted from $\Proj R [\m t]$.
\end{remark}

Without the assumption that the complete $\m$-primary monomial ideal is finitely supported,  
the ideal   $\m I$ may fail to be complete as 
we demonstrate in  Example~\ref{notfinsup}.   Example~\ref{notfinsup} is a 
modification of the example given in  \cite[Exercise 1.15]{SH}.

\begin{example}\label{notfinsup}
Let  $(R,\m)$ be  in Definition~\ref{5.1} with $d = 3$  and $\m = (x, y, z)R$.
Let $I   =  (x^{12}, y^7z^5)R +  \m^{13}$.   Then  
$I$ is an integrally closed ideal.
The integral closure of $\m I$ contains monomials of order 13  that  are not in $\m I$.  
Using that $I$ is the integral closure of $(x^{12}, y^7 z^5, y^{13}, z^{13})R$, 
we see that the Rees valuations of $I$ are  the monomial valuations  $v$ and $w$ where: 
	$$v (x) = 91, \quad v (y) = 96, \quad v (z) = 84$$
	$$w (x) = 65, \quad w (y) = 60, \quad w (z) = 72$$
Examples of monomials integral over $\m I$ are $x^6 y^4 z^3$ and $x^2 y^6 z^5$.
Let $J$ denote the integral closure of $\m I$.  Then  $J = \m * I$.
Since $I$ and $(J : \m)$ are complete ideals, and for each valuation $v'$ dominating $R$ 
we have $v' (J) = v '(\m) + v' (I)$ and $v' (J : \m) \ge v '(J) - v' (\m) = v '(I)$, it follows that $J : \m \subseteq I$ .
We clearly have $I \subseteq J : \m$, so $I = J : \m$ and hence $J = \m * (J : \m)$.
Thus we have constructed a complete monomial ideal $J$ such that $\m * (J : \m) = J$, but $\m (J : \m) \ne J$.
\end{example}

\section{The Monomial Condition for Transforms} 

\begin{discussion}  \label{13}
Let $(R, \m)$ be a $d$-dimensional equicharacteristic regular local ring.  
In Section~\ref{c5} we define monomial ideals with respect to a fixed regular
system of parameters for $R$. We then examine properties of these monomial ideals
with respect to monomial local quadratic transforms and inverse transforms.
With a fixed regular system of parameters for $R$, in Section~\ref{cc5}  we 
consider a finite sequence   $(R_i,\m_i)$ of  local monomial quadratic transformation of $R$,
where the variables for $R_{i+1}$ are determined by the variables for  $R_i$ as in Setting~\ref{5.3}.
For such a sequence as in Setting~\ref{5.3}, the special $*$-simple ideal $P_{R_0R_n}$ is then
a monomial ideal, and Corollary~\ref{5.6} describes properties of the index and order of these monomial ideals.  

 In 
this connection, it is natural to ask:  let $n   \in \N_0$, and  let 
$(R_i,\m_i)$ be a sequence of local quadratic transformations  
\begin{equation}  \label{eq13}  
R ~:= ~ R_0 ~\subset ~ R_1 ~\subset ~  \ldots \subset ~ R_{n-1} ~ \subset  ~R_{n}
\end{equation}
 such that $R_0 / \m_0 = R_{n} / \m_{n}$.  Under what conditions does there exist a 
 regular system of parameters for $R$ such that with respect to this system of parameters the 
 local quadratic transformations in Equation~\ref{eq13} are monomial?    It is clear that for $n = 1$,  so a local quadratic 
 transformation $R  \subset R_1$  with $R/\m = R_1/\m_1$, 
 the answer is that always such a regular system of parameters for $R$  can be found.   Theorem~\ref{is monomial}
 implies that the answer is also affirmative for $n = 2$, while Example~\ref{not monomial} shows that for $n= 3$
  the answer in general is negative.   
\end{discussion}

 Theorem~\ref{is monomial}   gives sufficient conditions in order that the  sequence in Equation~\ref{eq13} be  monomial
with respect to some regular system of parameters for $R$.

\begin{theorem}  \label{is monomial}  Let $(R,\m)$ be a $d$-dimensional  equicharacteristic regular local ring
and let  $(R_i,\m_i)$ with $0 \le i \le n + 1$ be a sequence of local quadratic transformations with $R = R_0$ such that $R_0 / \m_0 = R_{n+1} / \m_{n+1}$.
If there is no change of direction from $R_0$ to $R_{n}$,  then 
there exists a regular system of parameters   for $R$ such that  with respect to these parameters,   
the sequence  $R_0    \subset \dots \subset R_{n+1}$ 
is monomial as in Setting~\ref{5.3}.
\end{theorem}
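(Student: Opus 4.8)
The plan is to normalize the initial segment $R_0\subset\cdots\subset R_n$ into standard monomial form, and then to correct the last transform $R_n\subset R_{n+1}$ by a change of the regular system of parameters of $R$ that leaves the first $n$ steps monomial. First note that the natural injections $R_i/\m_i\hookrightarrow R_{i+1}/\m_{i+1}$, together with $R_0/\m_0=R_{n+1}/\m_{n+1}$, force $R_i/\m_i=R_0/\m_0$ for every $i$. Since there is no change of direction from $R_0$ to $R_n$, Remark~\ref{7.12} lets us choose $x\in\m$ lying in a minimal generating set of every $\m_j$ ($0\le j\le n$), and then the construction in the proof of Proposition~\ref{7.5} (implication $(1)\Rightarrow(5)$), which uses only this choice and the equality of residue fields, supplies a regular system of parameters $x,y_1,\dots,y_{d-1}$ of $R$ with
$$
\m_j=(x,\ y_1/x^j,\dots,\ y_{d-1}/x^j)R_j\qquad(1\le j\le n).
$$
Setting $Y_i:=y_i/x^n$, we then have $\m_n=(x,Y_1,\dots,Y_{d-1})R_n$, and $R_0\subset\cdots\subset R_n$ is monomial in the $x$-direction in the sense of Setting~\ref{5.3}. (If $n=0$ there is nothing to normalize.) So it remains only to re-choose $y_1,\dots,y_{d-1}$ so that $R_n\subset R_{n+1}$ becomes monomial as well.

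The key is two observations. First, because $R_{n+1}/\m_{n+1}=R_n/\m_n=:k'$, the transform $R_{n+1}$ is determined by a $k'$-rational point $P$ of the exceptional divisor $\Proj(\gr_{\m_n}R_n)=\mathbb{P}^{d-1}_{k'}$: writing $P=[c_0:c_1:\cdots:c_{d-1}]$ with respect to the basis of $\m_n/\m_n^2$ given by the leading forms of $x,Y_1,\dots,Y_{d-1}$, the step $R_n\subset R_{n+1}$ is the monomial transform in the $x$-direction exactly when $P=[1:0:\cdots:0]$, and the monomial transform in the $y_i$-direction exactly when $P$ is the $i$-th coordinate point. Second, the following two families of changes of the regular system of parameters of $R$ each preserve the identities $\m_j=(x,y_1/x^j,\dots,y_{d-1}/x^j)R_j$ for $1\le j\le n$ (a direct check, in which the hypothesis enters through $\m R_j=xR_j$): the substitutions $y_i\mapsto\sum_j a_{ij}y_j$ with $(a_{ij})$ invertible modulo $\m$ and $x$ held fixed, and the translations $y_i\mapsto y_i+c_ix^{n+1}$ with $c_i\in R$ and the remaining parameters held fixed. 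On $\m_n/\m_n^2$ the first family induces the maps fixing the leading form of $x$ and mixing those of the $Y_i$ by $(a_{ij}\bmod\m)$, while the second induces the transvections sending the leading form of $Y_i$ to itself plus $(c_i\bmod\m)$ times the leading form of $x$.

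Consequently these coordinate changes act on the $k'$-points of $\mathbb{P}^{d-1}_{k'}$ through the parabolic subgroup $G$ of $\operatorname{GL}_d(k')$ consisting of the matrices $\left(\begin{smallmatrix}a&0\\ b&C\end{smallmatrix}\right)$ with $a\in k'^{\times}$, $C\in\operatorname{GL}_{d-1}(k')$, $b\in k'^{\,d-1}$, and $G$ has exactly two orbits on $\mathbb{P}^{d-1}_{k'}$: the affine chart $\{c_0\ne0\}$, which contains $[1:0:\cdots:0]$, and the hyperplane $\{c_0=0\}$, which contains every coordinate point. Hence if $c_0\ne0$ a suitable element of $G$ carries $P$ to $[1:0:\cdots:0]$, and if $c_0=0$ a suitable element carries $P$ to the first coordinate point; performing the associated change of the regular system of parameters of $R$ makes $R_n\subset R_{n+1}$ monomial (in the $x$-direction, respectively the $y_1$-direction) while keeping $R_0\subset\cdots\subset R_n$ monomial in the $x$-direction. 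This yields a regular system of parameters of $R$ with respect to which the whole sequence $R_0\subset\cdots\subset R_{n+1}$ is monomial as in Setting~\ref{5.3}.

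The step requiring the most care is the second observation: checking that the two families of coordinate changes genuinely preserve the normalization $\m_j=(x,y_1/x^j,\dots,y_{d-1}/x^j)R_j$ for $1\le j\le n$ and computing their action on the leading-form space $\m_n/\m_n^2$ (one also has to record, in the small cases $n\le 1$ where $\m_n^n\not\subseteq\m_n^2$, that the same computations still go through). Granting this, the rest is the elementary fact that a parabolic subgroup of $\operatorname{GL}_d$ acts transitively both on an affine chart of $\mathbb{P}^{d-1}$ and on the complementary hyperplane, so the single point $P$ corresponding to $R_{n+1}$ can always be moved onto a coordinate point.
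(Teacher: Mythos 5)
Your proof is correct, and it rests on the same underlying computations as the paper's, but you package the final step more geometrically. The paper first observes (implicitly, via the proximity considerations) that no change of direction from $R_{n-1}$ to $R_{n+1}$ together with no change from $R_0$ to $R_n$ gives no change from $R_0$ to $R_{n+1}$, and then applies Proposition~\ref{7.5} to the whole chain; in the remaining case, after normalizing $R_0\subset\cdots\subset R_n$ via Proposition~\ref{7.5}, it observes that $c_x=0$ and performs the single correction $w\mapsto w-c_w y$ by hand. You replace this two-case computation with a uniform argument: the two families of coordinate changes you identify — the linear mixing of $y_1,\dots,y_{d-1}$ and the translations $y_i\mapsto y_i+c_i x^{n+1}$ — are exactly the moves used in the base and inductive steps of the proof of Proposition~\ref{7.5} (the latter with exponent $n+1$ instead of $n$, since you are one level further along), and you recognize their images in $\operatorname{GL}(\m_n/\m_n^2)$ as generating the stabilizer of the line through $\bar{x}$, a parabolic subgroup whose two orbits on $\mathbb{P}^{d-1}$ each contain a coordinate point. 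The gain is a clean explanation of \emph{why} the correction always succeeds and a unification of the paper's two cases (your dichotomy $c_0\ne 0$ versus $c_0=0$ is exactly the paper's dichotomy by change of direction from $R_{n-1}$ to $R_{n+1}$, since $x\in\m_{n+1}^2$ iff $c_0=0$); the paper's presentation is shorter because it simply leans on the already-proved Proposition~\ref{7.5} for the $c_0\ne 0$ case and writes the $c_0=0$ correction explicitly. One small remark: the translations $y_i\mapsto y_i+c_ix^{n+1}$ preserve the identities $\m_j=(x,y_1/x^j,\dots,y_{d-1}/x^j)R_j$ for $1\le j\le n$ because $c_ix^{n+1-j}\in xR_j$ when $n+1-j\ge 1$; this requires nothing more than $j\le n$, so your parenthetical worry about small $n$ is unnecessary — the computation goes through verbatim for $n=0,1$.
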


\begin{proof}
If there is  no change in direction from $R_{n-1}$ to $R_{n+1}$, the conclusion follows 
from Proposition~\ref{7.5}.
Assume  there is a change in direction from $R_{n-1}$ to $R_{n+1}$.
Proposition~\ref{7.5} applied to the sequence from $R_0$ to $R_{n}$ implies  there exists a regular system of parameters
$(x, y, \ldots,   w, \ldots, z)$  for $R$  such that 
$\m_i = (x, \frac{y}{x^i},  \ldots, \frac{w}{x^i}, \ldots, \frac{z}{x^i})$,  for  each $i$ 
with  $1 \le i \le n $.  
Let $x_n = x$, $y_n = \frac{y}{x^n}$, $\ldots$, $w_n = \frac{w}{x^n}$, $\ldots$, $z_n = \frac{z}{x^n}$.

We may assume  without loss of generality that $R_{n+1}$ is a localization 
of $R_{n} [\frac{\m_{n}}{y_{n}}]$.
Since $R_0 / \m_0 = R_{n+1} / \m_{n+1}$, we have 
$$
	\m_{n+1} ~ =  ~( \frac{x_{n}}{y_{n}} - c_{x}, ~ y_{n}, \ldots, \frac{w_n}{y_n} - c_w, \ldots, \frac{z_{n}}{y_{n}} - c_{z})R_{n+1},
$$
where for each variable $w$   the element  $c_{w} \in R_0$.
Since there is a change of direction from $R_{n-1}$ to $R_{n+1}$,
we have  $\frac{x_{n}}{y_{n}} \in \m_{n+1}$.  Thus we must have $c_x \in \m$,  
and we may assume  $c_{x} = 0$.

For each variable $w$  other than  $x$ and  $y$, we set $w' = w - c_{w} y$.    We have 
$$
\m  ~=~   (x,y, \ldots, w', \ldots, z')R \quad \text{ and } \quad  \m_i~=~ (x, \frac{y}{x^i}, \ldots, \frac{w'}{x^i}, \ldots, \frac{z'}{x^i})R_i,
$$
for each $i$ with $1 \le i \le n$.
Notice that $w'   = x^n(w_n - c_wy_n)$.   Thus
$$
	\m_{n+1} ~ =  ~( \frac{x_{n}}{y_{n}} , ~ y_{n}, \ldots, \frac{w'}{x^ny_n}, \ldots, \frac{z'}{x^ny_{n}})R_{n+1}.
$$
Hence    $R_0    \subset \dots \subset R_{n+1}$  is monomial with respect 
to $(x, y, \ldots, w', \ldots, z')$.
\end{proof}

Theorem~\ref{is monomial}  together with  Theorem~\ref{5.41} yield 
the following  description of  the special $*$-simple
ideal $P_{R_0R_{n+1}}$ in the case  where 
there is no change of direction from $R_0$ to $R_{n}$,
and there is a change of direction 
from $R_{n-1}$ to $R_{n+1}$.

\begin{corollary}\label{invar4}
 Let $(R,\m)$ and $(R_i,\m_i)$ be as in  Theorem~\ref{is monomial}.  
If there is no change of direction from $R_0$ to $R_{n-1}$, and there is a change of 
direction from $R_{n-1}$  to $ R_{n+1}$, then the special $*$-simple 
ideal $I := P_{RR_{n+1}}$ has
the following properties: 
\begin{enumerate}
\item $\mu (I) = \mu (\m^2) = \binom{d+1}{2}$.
\item 
$\mathcal B(I)=\{2, \ldots, 2,1,1\}$. 
\item The index and order of $I$ are $(2 n + 1, 2)$.
\item  The Rees valuations of $I$ are  the order valuations of $R_{n-1}$ and $R_{n+1}$.
\end{enumerate}

\end{corollary}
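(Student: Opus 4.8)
The plan is to reduce to the monomial setting of Section~\ref{cc5} via Theorem~\ref{is monomial}, and then to read off all four invariants from the way the index and order of a complete monomial ideal behave under complete inverse transform, as recorded in Theorem~\ref{5.41} and Remark~\ref{5.41r}; the generator count in item~(1) will come from Corollary~\ref{mingens4}, and the Rees valuations in item~(4) from Proposition~\ref{reestrans} together with Theorem~\ref{5.41}.

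First I would invoke Theorem~\ref{is monomial} --- its hypothesis ``no change of direction from $R_0$ to $R_n$'' being built into the phrase ``as in Theorem~\ref{is monomial}'' --- to fix a regular system of parameters $x, y, \ldots, z$ for $R$ so that the whole sequence $R_0 \subset \cdots \subset R_{n+1}$ is monomial as in Setting~\ref{5.3}, with each of the first $n$ transformations in the $x$-direction. The hypothesis that there is a change of direction from $R_{n-1}$ to $R_{n+1}$ then forces the transformation $R_n \subset R_{n+1}$ to be in a direction other than $x$ (otherwise $x$ would remain a minimal generator of $\m_{n+1}$, contradicting $\m_{n-1} \subseteq \m_{n+1}^2$); after relabelling I may assume it is in the $y$-direction. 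With this normalization each $P_{R_iR_{n+1}}$ is an $\m_i$-primary complete monomial ideal of $R_i$ whose only base point in the first neighbourhood of $R_i$ is $R_{i+1}$, and since it is special $*$-simple it is not a $*$-multiple of $\m_i$; combined with Remark~\ref{compat3} and Remark~\ref{7.2} this gives $P_{R_iR_{n+1}} = \CIT(P_{R_{i+1}R_{n+1}})$ for $0 \le i \le n$.

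The heart of the argument is then a descending induction on $i$ computing the pair $(\text{index},\,\text{order})$ of $P_{R_iR_{n+1}}$. The base cases are $P_{R_{n+1}R_{n+1}} = \m_{n+1}$ with pair $(1,1)$ and $P_{R_nR_{n+1}}$ with pair $(2,1)$ (Remark~\ref{gcd one}, or Fact~\ref{order1}). For the triple $R_{n-1} \subset R_n \subset R_{n+1}$ there is a change of direction from $R_{n-1}$ to $R_{n+1}$, so by Remark~\ref{5.41r} the pair of $P_{R_{n-1}R_{n+1}}$ is $(2+1,\,2) = (3,2)$; for every earlier triple $R_i \subset R_{i+1} \subset R_{i+2}$ with $0 \le i \le n-2$ there is no change of direction, so the order stays equal to $2$ while the index grows by $2$ at each step, and one obtains the pair $(2n+1-2i,\,2)$ for $0 \le i \le n-1$, in particular $(2n+1,\,2)$ at $i=0$. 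This is item~(3). Since the point basis entry $r_i = \ord_{R_i}(I^{R_i}) = \ord_{R_i}(P_{R_iR_{n+1}})$ is just the order of $P_{R_iR_{n+1}}$, it equals $2$ for $0 \le i \le n-1$ and $1$ for $i = n, n+1$, which gives item~(2). For item~(1), $I = P_{R_0R_{n+1}}$ is an $\m$-primary complete monomial ideal of order $2$ whose only base point in the first neighbourhood of $R$ is $R_1$ (in the $x$-direction), so Corollary~\ref{mingens4} yields $\mu(I) = \mu(\m^2) = \binom{d+1}{2}$.

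Finally, for item~(4): since $I^{R_i} = P_{R_iR_{n+1}}$ by Remark~\ref{7.2} and each $R_i$ birationally dominates $R$, Proposition~\ref{reestrans} converts ``$\ord_{R_i} \in \Rees(I)$'' into ``$\ord_{R_i} \in \Rees(P_{R_iR_{n+1}})$''. Applying the equivalence (1)$\iff$(5) of Theorem~\ref{5.41} to $R_{n-1} \subset R_n \subset R_{n+1}$ (a change of direction) gives $\ord_{R_{n-1}} \in \Rees(I)$, and applying it to each $R_i \subset R_{i+1} \subset R_{i+2}$ with $0 \le i \le n-2$ (no change of direction) gives $\ord_{R_i} \notin \Rees(I)$; moreover $\ord_{R_{n+1}}$ is the unique Rees valuation of $\m_{n+1} = I^{R_{n+1}}$, so $\ord_{R_{n+1}} \in \Rees(I)$ by Proposition~\ref{reestrans} once more. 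Since $R_0/\m_0 = R_{n+1}/\m_{n+1}$, Remark~\ref{7.1.5} confines $\Rees(I)$ to $\{\ord_{R_0}, \ldots, \ord_{R_{n-1}}, \ord_{R_{n+1}}\}$, so $\Rees(I) = \{\ord_{R_{n-1}}, \ord_{R_{n+1}}\}$, as asserted. I expect the main difficulty to be organizational rather than conceptual: one must carefully match the roles $(R_0, R_1, R_2; I_0, I_1, I_2)$ of Theorem~\ref{5.41} and Remark~\ref{5.41r} with the shifted triples $(R_i, R_{i+1}, R_{i+2};\, P_{R_iR_{n+1}}, P_{R_{i+1}R_{n+1}}, P_{R_{i+2}R_{n+1}})$, keep careful track of where the single change of direction sits, and discharge the routine point that no spurious power of $\m_i$ enters the identity $P_{R_iR_{n+1}} = \CIT(P_{R_{i+1}R_{n+1}})$.
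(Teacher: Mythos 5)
Your proof is correct and follows essentially the same route as the paper's (very terse) proof: the paper simply cites Theorem~\ref{5.41} for items (1)--(3) and Theorem~\ref{5.41}, Remark~\ref{7.1.5}, and Remark~\ref{7.2} for item (4), while you spell out the descending induction on $i$ using Remark~\ref{5.41r}, justify $P_{R_iR_{n+1}} = \CIT(P_{R_{i+1}R_{n+1}})$ via $*$-simplicity, supply Corollary~\ref{mingens4} for the generator count, and supply Proposition~\ref{reestrans} to move Rees-valuation information up the tower. This is the same argument with the routine details made explicit.
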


\begin{proof}
Items 1, 2, and 3 follow from Theorem~\ref{5.41}.
Item 4 follows from Theorem~\ref{5.41}, Remark~\ref{7.1.5}, and Remark~\ref{7.2}.
\end{proof}

\begin{example}  Assume the notation of Corollary~\ref{invar4} and that 
$\dim R = 3$ with $\m = (x, y, z)$.  Then the special 
$*$-simple ideal $I = P_{RR_{n+1}}$  has minimal monomial generators,
	$$I = (y^2, ~ y z, ~ z^2, ~ x^{n} z, ~ x^{n+1} y, ~ x^{2n + 1}).$$
\end{example}

Theorem~\ref{is monomial2} gives other sufficient conditions in order
that the  sequence in Equation~\ref{eq13} be  monomial
with respect to some regular system of parameters for $R$.

\begin{theorem} \label{is monomial2}  Let $(R,\m)$ be a $d$-dimensional 
equicharacteristic regular local ring, and let 
$R = R_0 \subset R_1 \subset R_2 \subset R_3$ be a sequence of local 
quadratic transforms such that $R/\m = R_3/\m_3$.
If there is a change of direction from $R$ to $R_{2}$ and a change of 
direction from $R_1$ to $R_3$, then
there exists a regular system of parameters   for $R$ such that  
with respect to these parameters,   
the sequence  $R_0    \subset \dots \subset R_{3}$ 
is monomial as in Setting~\ref{5.3}.
\end{theorem}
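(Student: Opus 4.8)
The plan is to construct the desired regular system of parameters for $R$ in two stages: first fit it to the initial transforms $R_0\subset R_1\subset R_2$, then spend the remaining freedom so that $R_2\subset R_3$ becomes monomial as well. Since residue fields grow along local quadratic transformations and $R/\m=R_3/\m_3$, all four rings have residue field $k:=R/\m$, so Definition~\ref{7.11d} applies throughout. Because there is a change of direction from $R_0$ to $R_2$, Theorem~\ref{is monomial}, applied to the three-term sequence $R_0\subset R_1\subset R_2$ (which trivially has no change of direction over its single first step), produces a regular system of parameters $x,y,z,\dots,w$ for $R$ making $R_0\subset R_1\subset R_2$ monomial: the transform $R_0\subset R_1$ is in the $x$-direction, $R_1\subset R_2$ is in the $(y/x)$-direction, $\m_1=(x,\,y/x,\,z/x,\dots,w/x)R_1$, and $\m_2=(x^2/y,\,y/x,\,z/y,\dots,w/y)R_2$ with all translation constants absorbed. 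Write $x_2=x^2/y$, $y_2=y/x$, $z_2=z/y,\dots$ for the induced parameters of $R_2$; since $x=x_2y_2$ we have $\ord_{R_2}(x)=2$, and in fact the change of direction from $R_0$ to $R_2$ forces $\ord_{R_2}$ of any generator of $\m_0R_1$ lying in $\m_0$ to equal $2$, a fact I will use when re-choosing coordinates below.

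Next I record what the second hypothesis gives. Let $c\in\m_2\setminus\m_2^2$ generate the direction of $R_2\subset R_3$, so $\m_2R_3=cR_3$. The change of direction from $R_1$ to $R_3$ says $\m_1\subseteq\m_3^2$; since $y_2=y/x\in\m_1$, this yields $\ord_{R_3}(y_2)\geq2$, so $y_2R_3\subsetneq cR_3$ and the transform $R_2\subset R_3$ is not in the $y_2$-direction. Writing $\bar c$ for the image of $c$ in $\m_2/\m_2^2=k\bar{x_2}\oplus k\bar{y_2}\oplus k\bar{z_2}\oplus\cdots\oplus k\bar{w_2}$, this means $\bar c$ is not a scalar multiple of $\bar{y_2}$.

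It remains to adjust $x,y,z,\dots,w$, keeping $R_0\subset R_1\subset R_2$ monomial, so that $\bar c$ becomes proportional to one of the coordinate directions $\bar{x_2},\bar{z_2},\dots,\bar{w_2}$; then $R_2\subset R_3$ is monomial in that direction, the translation constants appearing in $\m_3$ are absorbed exactly as in the proof of Theorem~\ref{is monomial}, and the theorem follows. I would split into cases on the position of $\bar c$. If $\bar c\in k\bar{x_2}$, then $c/x_2$ is already a unit of $R_2$ and $R_2\subset R_3$ is monomial in the $x_2$-direction. If $\bar c$ lies in the hyperplane $k\bar{x_2}\oplus k\bar{z_2}\oplus\cdots\oplus k\bar{w_2}$ but off the $\bar{x_2}$-axis, one replaces one of $z,\dots,w$, say $z$, by $z'=z+\delta$ with $\delta$ chosen from $(z,\dots,w)R+\m^2$ (which is contained in $\m\cap\m_2^4$ because $\m\subseteq\m_2^2$): the map $\delta\mapsto\overline{\delta/y}$ carries this set of admissible $\delta$'s onto exactly that hyperplane, so $z'/y$ can be made to have image $\bar c$, and $R_2\subset R_3$ is monomial in the $z_2'$-direction. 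The remaining possibility, that $\bar c$ has a nonzero $\bar{y_2}$-component but is not a multiple of $\bar{y_2}$, is the main obstacle. Here one must go back and re-select the direction of $R_1\subset R_2$: the change of direction from $R_0$ to $R_2$ means this direction is free to be any element of $\m_1\setminus\m_2^2$, and, since $R_0[\m_0/x]/xR_0[\m_0/x]$ is a polynomial ring over $k$ (cf. Definition~\ref{2.1}), it can always be taken of the form $v/x$ with $v\in\m_0$, the images in $\m_1/\m_1^2$ of these representatives filling out $\m_1/\m_1^2\setminus H$, where $H$ is the (hyperplane) image of $\m_1\cap\m_2^2$. Varying this choice moves the lines $k\bar{x_2}$ and $k\bar{y_2}$ in $\m_2/\m_2^2$ (the identity $\ord_{R_2}(x)=2$ above guarantees that $x/(v/x)$ remains a legitimate coordinate of $R_2$), and the heart of the proof is to show that the direction of $R_1\subset R_2$ can be chosen so that $k\bar c$ becomes the line $k\bar{x_2}$, or at least so that $\bar c$ lands in the hyperplane $k\bar{x_2}\oplus k\bar{z_2}\oplus\cdots$, reducing to one of the earlier cases. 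This last step is a coordinate chase of the same flavor as, but more delicate than, the one in the proof of Theorem~\ref{is monomial}; the rest is the routine translation-absorbing bookkeeping of Proposition~\ref{7.5} and that theorem.
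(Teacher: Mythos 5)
The overall strategy in your plan (first fit coordinates to $R_0\subset R_1\subset R_2$ via Theorem~\ref{is monomial}, then adjust to handle $R_2\subset R_3$) is the same as the paper's, but the second stage has two genuine gaps.

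The most serious one is that you treat the element $c$ with $\m_2R_3 = cR_3$ as essentially fixed, which manufactures your ``Case 3.'' In fact any element of $\m_2$ whose order in $R_3$ equals one works as $c$, and since the change of direction from $R_1$ to $R_3$ gives $y_2 = y/x \in \m_1 \subseteq \m_3^2$, one may replace $c$ by $c - \lambda y_2$ for a suitable $\lambda$ without affecting $\m_2R_3 = cR_3$, so that $\bar c$ can always be taken to lie in the hyperplane $k\bar{x_2}\oplus k\bar{z_2}\oplus\cdots\oplus k\bar{w_2}$. Thus ``Case 3'' never arises, and the elaborate sketch you give for it (``re-select the direction of $R_1\subset R_2$'') is both unneeded and not actually available: once $R_1$ and $R_2$ are given rings, the direction of that local quadratic transform is fixed; you can only rename coordinates for it. The paper makes this choice-of-$c$ freedom clean and explicit by arguing instead with the affine charts of $\Bl_{\m_2}(R_2)$: since $y_2 \in \m_3^2$, the chart $R_2[\m_2/y_2]$ is not contained in $R_3$, so $R_3$ must be a localization of one of the other charts $R_2[\m_2/x_2], R_2[\m_2/z_2], \ldots$

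The second gap is inside your ``Case 2.'' Arranging $\overline{z_2'} = \bar c$ only ensures that $R_3$ is a localization of the chart $R_2[\m_2/z_2']$; it does not by itself make $R_2\subset R_3$ monomial in the $z_2'$-direction. One must still check that the remaining translation constants $\gamma_a$ in $\m_3 = (z_2', \ a_2/z_2'-\gamma_a, \ldots)R_3$ can be eliminated by a further change of regular system of parameters on $R$ that preserves the monomiality of $R_0\subset R_1\subset R_2$. This is exactly what the paper works out: it distinguishes the subcase where $R_2[\m_2/x_2] \subseteq R_3$ (where $\gamma_y = 0$ by hypothesis, and each other constant is absorbed by $w \mapsto w - c_w x^2$) from the subcase where $R_2[\m_2/x_2] \not\subseteq R_3$ (where both $\gamma_x = \gamma_y = 0$, and the remaining constants are absorbed by $w \mapsto w - c_w z$). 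Your appeal to ``the same bookkeeping as in Theorem~\ref{is monomial}'' skips over the fact that the needed absorbing substitutions here are genuinely of a different shape — multiples of $x^2$ or of $z$ rather than of $y$ — and that their legitimacy hinges on the vanishing of $\gamma_y$ (and of $\gamma_x$ in one subcase) coming from the change-of-direction hypotheses.
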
 

\begin{proof} Theorem~\ref{is monomial} implies that the sequence $R \subset R_1 \subset R_2$
is monomial.  Hence there exists a regular system of parameters $x, y,  \ldots,  z$ for R
such that $R \subset R_1 \subset R_2$ is monomial with 
$ \m_1 ~ =  ~ (x, \frac{y}{x}, \ldots,  \frac{w}{x}, \ldots,  \frac{z}{x})R_1$   and   
$ \m_2 ~ = ~ (\frac{x^2}{y}, \frac{y}{x}, \ldots, \frac{w}{y},  \ldots, \frac{z}{y})R_2$,
that is, the extension $R  \subset R_1$ is monomial in the $x$-direction and $R_1 \subset R_2$ is
monomial in the $y$-direction.  
Let $x_2 =   \frac{x^2}{y}, ~ y_2 =  \frac{y}{x}, \ldots, w_2 =  \frac{w}{y},  \ldots, z_2 = \frac{z}{y}$.
Since there is a change of direction from $R_1$ to $R_3$,   the affine 
component $R_2[\frac{\m_2}{y_2}]$ of the
blowup of $\m_2$ is not contained in $R_3$.  Hence $R_3$ is contained in and thus 
is a localization of at least one of the other affine
components $R_2[\frac{\m_2}{x_2}]$, $ \ldots,  R_2[\frac{\m_2}{w_2}]$, $ \ldots,  
R_2[\frac{\m_2}{z_2}]$.    
If $R_2[\frac{\m_2}{x_2}]$ is 
contained in $R_3$,  then $\m_3 =  
(x_2, \frac{y_2}{x_2} - c_y, \ldots, \frac{w_2}{x_2} - c_w, \ldots, \frac{z_2}{x_2} - c_z)R_3$,
where the elements $c_y, \ldots, c_w, \ldots, c_z$  may be taken to be in $R$ since $R/\m = R_3/\m_3$.  
Since there is a change of direction from $R_{1}$ to $R_{3}$,
we have  $\frac{y_{2}}{x_{2}} \in \m_{3}$.  Thus we must have $c_y \in \m$,  
and we may assume  $c_{y} = 0$.  
Define $w' = w - c_wx^2$ for each variable $w$ other than $x$ and $y$.  Then  $x, y, \dots w', \ldots, z'$ is a 
regular system of parameters for $R$ and we have  
$$
\frac{w_2}{x_2} - c_w ~ =  ~ \frac{w}{x^2} - c_w ~ = ~ \frac{w - c_wx^2}{x^2} ~=~  \frac{w'}{x^2}
$$
for each variable $w$ other than $x$ and $y$. Thus the sequence $R$ to $R_3$ is monomial with
respect to the regular system of parameters  $x, y, \dots w', \ldots, z'$ for $R$.    

It remains to consider the case where  $R_2[\frac{\m_2}{x_2}]$ is not
contained in $R_3$.  Then $R_3$ is a localization of $R_2[\frac{\m_2}{w_2}]$ for some $w_2$.
We may assume $R_3$ is a localization of $R_2[\frac{\m_2}{z_2}]$.
Thus $\m_3 =  
(\frac{x_2}{z_2} - c_x, \frac{y_2}{z_2} - c_y, \ldots, \frac{w_2}{z_2} - c_w, \ldots, z_2)R_3$.  
As in the previous case, we may assume $c_y = 0$.
Because $R_3$ does not contain 
$R_2 [\frac{\m_2}{x_2}]$, we have  $\frac{x_2}{z_2} \in \m_3$, so we may assume $c_x = 0$.
For each variable $w$ other than $x, y,$ and  $z$, we define $w' = w - c_w z$.
Then $x, y, \ldots, w', \ldots z$ is a regular system of parameters for $R$.
We have 
	$$\frac{w_2}{z_2} - c_w = \frac{w}{z} - c_w = \frac{w - c_w z}{z} = \frac{w'}{z}.$$
Therefore the  sequence $R$ to $R_3$ is monomial with respect to the 
regular system of parameters $x, y, \ldots, w', \ldots z$ for $R$.
\end{proof}

We observe a relationship between proximate points and change of direction.
We recall the following definition.

\begin{definition}\label{6.1}
Let $\alpha \subsetneq \beta$ be a birational extension of $d$-dimensional regular local rings.
Then $\beta$ is said to be {\bf proximate} to $\alpha$ if $\beta \subseteq V_{\alpha}$,
where $V_{\alpha}$ denotes the order valuation ring of $\alpha$.
\end{definition}

\begin{proposition}\label{6.3}
Assume notation as in Discussion~\ref{13} with $n = 2$, and let $V$ denote the order
valuation ring for $R$. 
The following are equivalent:
\begin{enumerate}
\item There is a change of direction from $R_0$ to $R_2$.
\item $R_2$ is proximate to $R_0$.
\end{enumerate}
\end{proposition}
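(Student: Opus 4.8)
The plan is to adapt a regular system of parameters to $R_0\subset R_1$ and then follow the order valuation $V$ of $R_0$ (recall that statement~(2), ``$R_2$ proximate to $R_0$,'' means $R_2\subseteq V$ by Definition~\ref{6.1}) through the second quadratic transform. Since $R_0/\m_0=R_1/\m_1$, I would first choose, exactly as in the $n=1$ step of the proof of Proposition~\ref{7.5}, a regular system of parameters $x,y,\dots ,z$ for $R_0$ such that $R_1$ is a localization of $R_0[\m_0/x]$ and $\m_1=(x,\,y/x,\dots ,z/x)R_1$; in particular $\m_0R_1=xR_1$. A routine order estimate (using $R_0[\m_0/x]\subseteq V$, so $\ord_{R_0}\ge 0$ there, together with $\ord_{R_0}(x)=1$) shows $\m_0^mR_0[\m_0/x]\cap R_0=\m_0^m$ for all $m\ge 0$, so $\ord_{R_0}$ coincides with the $x$-adic valuation $\nu$ of $R_1$ and hence $V=(R_1)_{xR_1}$. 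Writing the second transform as $R_2=B_{\p}$ with $B:=R_1[\m_1/u]$, $u\in\m_1\setminus\m_1^{2}$, and $\p=\m_2\cap B\supseteq\m_1B=uB$, the proposition reduces to deciding when $R_2\subseteq V$.

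I would then dispose of the case $u\in xR_1$ first. Here $u\notin\m_1^{2}$ forces $uR_1=xR_1$, hence $B=R_1[\m_1/x]$, which contains $y/x^{2}$; since $\ord_{R_0}(y/x^{2})=1-2=-1<0$ we get $R_2\not\subseteq V$, i.e.\ $R_2$ is not proximate to $R_0$. Moreover $\m_1R_2=uR_2=xR_2$ while $\m_0R_1=xR_1$, so $x$ is a witness for condition~(5) of Remark~\ref{7.12}, and there is no change of direction from $R_0$ to $R_2$. So both statements fail in this case, and I may assume $u\notin xR_1$ from now on.

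Then $\nu(u)=0$, so $u$ is a unit of $V$ and $\nu(m/u)=\nu(m)\ge 0$ for every $m\in\m_1$, giving $B\subseteq V$. The heart of the argument is to locate the center $Q:=\m_V\cap B$ of $V$ on $B$: I claim $Q=\sqrt{(x/u)B}$. Since $\nu(x/u)=1$ the element $x/u$ is a uniformizer of $V$, so $Q=\{b\in B:\nu(b)\ge 1\}\supseteq (x/u)B$. The plan to get equality: $B[1/u]=R_1[1/u]$, in which $(x/u)B[1/u]=xR_1[1/u]$ is prime (because $R_1/xR_1$ is a domain in which the image of $u$ is nonzero) and $(R_1[1/u])_{xR_1[1/u]}=V$; and no minimal prime of $(x/u)B$ contains $u$, since the only height-one prime of $B$ containing $u$ is $uB$ (here $B/uB$ is a polynomial ring over $R_1/\m_1$), whereas $x/u\notin uB$ — otherwise $x=u^{2}b$ with $b\in R_1[\m_1/u]$, which lies in the order valuation ring of $R_1$, would force $\ord_{R_1}(x)\ge 2$, contradicting $x\in\m_1\setminus\m_1^{2}$. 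Hence $(x/u)B$ has a unique minimal prime, and comparing the localizations of $B$ at that prime and at $Q$ (both equal $V$) identifies it with $Q$. It follows that $R_2=B_{\p}\subseteq V=B_Q$ if and only if $Q\subseteq\p$, i.e.\ if and only if $x/u\in\p$, i.e.\ if and only if $x/u\in\m_2$.

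It remains to match ``$x/u\in\m_2$'' with ``there is a change of direction from $R_0$ to $R_2$.'' If $x/u\notin\m_2$, then $x/u$ is a unit of $R_2$, so $xR_2=uR_2=\m_1R_2$; together with $\m_0R_1=xR_1$ this again makes $x$ a witness for Remark~\ref{7.12}(5), so there is no change of direction. If $x/u\in\m_2$, then $x=u\cdot(x/u)$ with $u\in\m_1\subseteq\m_2$ and $x/u\in\m_2$, so $x\in\m_2^{2}$, and $\m_0\subseteq\m_0R_2=xR_2\subseteq\m_2^{2}$ yields $\m_0\subseteq\m_2^{2}$, which is precisely a change of direction. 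Together with the case $u\in xR_1$ this proves $(1)\Leftrightarrow(2)$. The step I expect to be the main obstacle is the identification $Q=\sqrt{(x/u)B}$, that is, pinning down the center of the order valuation of $R_0$ on the blow-up chart $R_1[\m_1/u]$; the rest is coordinate bookkeeping plus Remark~\ref{7.12}. Alternatively one could open by invoking Theorem~\ref{is monomial} to assume $R_0\subset R_1\subset R_2$ is monomial with respect to some regular system of parameters, so that the dichotomy becomes ``the two steps are in the same variable-direction'' versus ``in different directions,'' but the same center computation would still be needed.
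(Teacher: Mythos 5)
Your argument is correct, and it takes a genuinely different route from the paper's. The paper first invokes Theorem~\ref{is monomial} to put the whole chain $R_0\subset R_1\subset R_2$ into monomial form at once; after that, \emph{no change of direction} means both steps are in the $x$-direction, so $y/x^2\in\m_2$ with $\ord_{R_0}(y/x^2)<0$ immediately gives $R_2\not\subseteq V$, while \emph{change of direction} (say $x$-then-$y$) allows a direct monomial computation showing $V=(R_2)_{x_2R_2}$. Your proof only adapts coordinates through the first transform (the genuinely easy step, needing just $R_0/\m_0=R_1/\m_1$) and then replaces the second monomialization by a coordinate-free computation of the center $Q=\m_V\cap B$ of $V$ on the affine chart $B=R_1[\m_1/u]$. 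The key observations you use — that $u\notin xR_1$ forces $B\subseteq V$, that $xR_1[1/u]$ is prime and survives as the unique minimal prime of $(x/u)B$, that $x/u\notin uB$ because $B$ sits inside the order valuation ring of $R_1$, and that $R_2\subseteq V$ is then equivalent to $Q\subseteq\p$, i.e.\ $x/u\in\m_2$ — are all sound, and the translation back to Remark~\ref{7.12}(5) on one side and $\m_0\subseteq\m_2^2$ on the other is handled cleanly. What the paper's approach buys is shorter exposition once Theorem~\ref{is monomial} is in hand; what your approach buys is independence from that normalization theorem, so the proof is more self-contained and makes visible the geometric content, namely where the order valuation of $R_0$ is centered on the second blow-up chart.
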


\begin{proof}
By Theorem~\ref{is monomial}, we may assume that the sequence of local 
quadratic transforms from $R_0$ to $R_2$ 
is a monomial sequence with respect to the 
regular system of parameters $x, y, \ldots, z$ for $R$. 

To show 2 implies 1, assume there is no change of direction from $R_0$ to $R_2$.
Then we may assume $R_0$ to $R_1$ and $R_1$ to $R_2$ are in the $x$-direction.
Thus $\frac{y}{x} \in \m_1$,  and  $\frac{y}{x^2} \in \m_2$.
Since $\frac{y}{x^2} \notin V$, we have  $R_2 \nsubseteq V$, so $R_2$ 
 is not proximate to $R_0$.

To show  1 implies 2, assume  there is a change of direction from $R_0$ to $R_2$.
Without loss of generality, $R_0$ to $R_1$ is in the $x$-direction 
and $R_1$ to $R_2$ is in the $y$-direction.
Let  $w$ denote  any  one of the  elements in the fixed regular
system of parameters for $R$ other than $x$ or $y$.  Then
$$
\begin{aligned}
\m_1 &= (x_1, y_1, \ldots, w_1, \ldots, z_1), 
\quad    x_1 = x, \quad y_1 = \frac{y}{x}, \quad w_1 = \frac{w}{x}  \\
\m_2 &= (x_2, y_2, \ldots, w_2), \ldots, z_2),
\quad  y_2 = y_1 = \frac{y}{x}, \quad x_2 = \frac{x_1}{y_1} = \frac{x^2}{y}, \quad w_2 = \frac{w_1}{y_1} = \frac{w}{y}
\end{aligned}
$$
We have  $V = (R_1)_{x R_1}$.  
Consider the ring $S = R_1 [\frac{\m_1}{y_1}]$.  Then $S \subset V$  and $V = S_{\p}$, where $\p$ is a 
height-one prime of $S$.  Since $S[\frac{1}{y_1}] = R_1[\frac{1}{y_1}]$, the 
height-one  primes of $S$ not containing $y_1$ are in one-to-one  correspondence  with the height-one  primes of $R_1$ 
not containing $y_1$.  Each of the rings $R_1$ and $S$ has precisely one height-one prime containing $y_1$, namely $y_1R_!$ and
$y_1S$.  Since $x_2$ has positive $V$-value and is not in any of the other height-one primes of $S$, we have 
$x_2 S = \p$  and  $V = (R_2)_{x_2R_2}$.   so $R_2$ is proximate to $R_0$.  
\end{proof}

The invariants $(s, r)$   defined in  Remark~\ref{5.41r}   of a special $*$-simple ideal     need not be relatively prime  
if the ideal is not monomial.  We demonstrate  this in Example~\ref{not monomial}.

\begin{example}  \label{not monomial}
Let $(R, \m)$ be an  equicharacteristic  $3$-dimensional regular local ring with $\m = (x, y, z)R$.
The ideal 
	$$I = (y^2 - x^3, ~ x^2 y, ~ x y^2, x z, ~ y z, ~ z^2)R$$
is readily seen to be special $*$-simple with base points
	$$R ~ =  ~R_0 ~ \subset ~ R_1~ \subset ~ R_2  ~\subset ~ R_3,$$
where 
$R_1$ and $R_2 $ are obtained from $R_0$ and $R_1$ by taking the local monomial quadratic transformations  
in the $x$-direction and $y$-direction, respectively.
Thus 
$R_1  = R[\frac{\m}{x}]_{(x, \frac{y}{x}, \frac{z}{x})}$, with $x_1 = x$,  $y_1 = \frac{y}{x}$ and $z_1 = \frac{z}{x}$,   and 
$R_2 = R_1[\frac{\m_1}{y_1}]_{(\frac{x_1}{y_1}, y_1, \frac{z_1}{y_1})}$ with $x_2 = \frac{x_1}{y_1}$,  $y_2 = y_1$ and 
$z_2 = \frac{z_1}{y_1}$.  The local quadratic transformation $R_2$ to  $R_3$ is 
$$R_3 = R_2 \big[ \frac{\m_2}{y_2} \big]_{(\frac{x_2}{y_2} - 1,~ y_2, ~ \frac{z_2}{y_2})}.$$
Thus $x_3 = \frac{x_2-y_2}{y_2}$,  $y_3 = y_2$ and $z_3 = \frac{z_2}{y_2}$.    
Let  $\nu$  denote the order valuation of $R_3$ and $V$ the corresponding valuation ring.  It is 
readily seen that 
	$$\nu (x) = 2, \quad \nu (y) = 3, \quad \nu (y^2 - x^3) = 7, \quad \nu (z) = 5.$$
Further, $I = I V \cap R$, so $I$ is a valuation ideal  and $\nu (I) = 7$.  The ideal $I$ has
order $2$,  and $x^3 \not\in I$ implies $\m^3$ is not contained in $I$. However,  we have  $\m^4 \subset I$.  
Thus $I$ has order $2$ and index $4$.  The point basis $\mathcal B(I) = \{2, 1, 1, 1\}$ is not  the 
point basis of a special $*$-simple monomial ideal.  
As noted in 
Remark~\ref{gcd one}, the invariants $(s,r)$ of a special $*$-simple monomial ideal are relatively prime.  
Therefore   there does not exist a regular system of parameters for $\m$  in which  the ideal $I$  
is a monomial ideal.
\end{example}

\begin{remark}  
Another  description of the order valuation ring $V$ of $R_3$  in 
Example~\ref{not monomial} may be obtained as follows.  
Since  $R$ is equicharacteristic, the completion $\widehat R$ of $R$ has the form 
 $\widehat{R} = k [[x, y, z]]$,  where  $k$  is a field.
Let $u, w, t$ be indeterminates over $k$, and consider the $k[[x,y,z]]$-algebra 
homomorphism $\varphi : k[[x,y,z]]   \longrightarrow k(u,w)[[t]]$ 
obtained by mapping
	$$x \mapsto t^3, \quad y \mapsto t^3 + u t^4, \quad z \mapsto w t^5.$$
The map  $\varphi$ is an embedding and $V = k (u, w) [[ t ]] \cap \mathcal{Q} (R)$.
\end{remark}

Example \ref{7.11} illustrates a pattern where there are exactly 
two changes of direction from $R_0$ to $R_3$ and where $R_0/\m_0=R_3/\m_3$.

\begin{example}\label{7.11}
Let $(R,~ \m)$ be an equicharacteristic $4$-dimensional regular local ring.
Assume that $R = R_0 \subset R_1 \subset R_2 \subset R_3$ is a sequence of local quadratic transforms 
such that there is a change of direction from $R$ to $R_2$ and a change of direction from $R_1$ to $R_3$.
Theorem~\ref{is monomial2} implies that there exists a regular system of parameters  $x, y, z, w$ 
for $R$  such that the sequence from $R$ to $R_3$ is monomial with respect to these parameters. 
Moreover, we may assume the sequence of local quadratic transforms is one of 
the following two choices:  
$$
R:=R_0 ~\subset~  {^xR_1} ~\subset~ {^{yx}R_2} ~\subset ~{^{zyx}R_3}, \quad \text{or} \quad R:=R_0 ~\subset~  {^xR_1} ~\subset~ {^{yx}R_2} ~\subset ~{^{xyx}R_3}.
$$
Let $V_i$ denote the order valuation ring of $R_i$ with valuation $v_i$, for  $0 \le i \le 3$.

In the case of $R_0 \subset~ {^xR_1} \subset~{^{yx}R_2} \subset {^{zyx}R_3}$,      we have:
\begin{enumerate}
\item The valuations are defined by,
$$
\begin{array}{c|c|c|c|c }
				&x	&y	&z	&w	\\ \hline
v_3:=\ord_{R_3}		&4	&6	&7	&8	\\ \hline
v_2:=\ord_{R_2}		&2	&3	&4	&4	\\ \hline
v_1:=\ord_{R_1}		&1	&2	&2	&2	\\ \hline
v_0:=\ord_{R_0}		&1	&1	&1	&1
\end{array}
$$
\item $P_{R_0R_3}$ is given by
$$P_{R_0R_3} = (y, ~ z, ~ w)^{3} + (x^5, ~ x^3 y, ~ x^3 z, ~ x^3 w, ~ x^2 y^2, ~ x^2 y z, ~ x y w, ~ x^3 z^2, ~ x z w, ~ x w^2)$$
\item $\Rees P_{R_0R_3} = \{ V_0, V_1, V_3 \}$
\item $P_{R_0R_3} = \{ a \in \m ~|~ v_3 (\alpha) \ge 18 \}$.
\end{enumerate}

In the  case of  $R_0 \subset~ {^xR_1} \subset~{^{yx}R_2} \subset {^{xyx}R_3}$, , we have:
\begin{enumerate}
\item The valuations are defined by,
$$
\begin{array}{c|c|c|c|c }
				&x	&y	&z	&w	\\ \hline
v_3:=\ord_{R_3}		&3	&5	&7	&7	\\ \hline
v_2:=\ord_{R_2}		&2	&3	&4	&4	\\ \hline
v_1:=\ord_{R_1}		&1	&2	&2	&2	\\ \hline
v_0:=\ord_{R_0}		&1	&1	&1	&1
\end{array}
$$
\item $P_{R_0R_3}$ is given by
$$P_{R_0R_3} = (y, ~ z, ~ w)^{3} + (x^5, ~ x^4 y, ~ x^3 z, ~ x^3 w, ~ x^2 y^2, ~ x y z, ~ x y w, ~ x z^2, ~ x z w, ~ x w^2)$$
\item $\Rees P_{R_0R_3} = \{ V_0, V_1, V_3 \}$
\item $P_{R_0R_3} = \{ a \in \m ~|~ v_3 (\alpha) \ge 15 \}$.
\end{enumerate}

\end{example}

\end{document}